\documentclass[11pt]{amsart}

\usepackage{latexsym}
\usepackage{amssymb}
\usepackage{amsmath}
\usepackage{color}
\usepackage{bbm}

\setcounter{tocdepth}{1}

\usepackage{graphicx}
\usepackage{tcolorbox}
\usepackage{tabularx}


\usepackage{tikz}
\usepackage{xparse}

\NewDocumentCommand\DownArrow{O{2.0ex} O{black}}{%
   \mathrel{\tikz[baseline] \draw [<-, line width=0.5pt, #2] (0,0) -- ++(0,#1);}
}
\NewDocumentCommand\UpArrow{O{2.0ex} O{black}}{%
   \mathrel{\tikz[baseline] \draw [->, line width=0.5pt, #2] (0,0) -- ++(0,#1);}
}

\newtheorem{theorem}{Theorem}[section]
\newtheorem{lemma}[theorem]{Lemma}
\newtheorem{proposition}[theorem]{Proposition}
\newtheorem{corollary}[theorem]{Corollary}
\newtheorem{definition}[theorem]{Definition}

\newtheorem{remark}[theorem]{Remark}

\newcommand\supp{\mathop{\rm supp}}

\newcommand{\cl}[1]{\mathcal{#1}}
\newcommand{\bb}[1]{\mathbb{#1}}

\newcommand\Tr{\mathop{\rm Tr}}

\begin{document}

\title[Homomorphisms of quantum hypergraphs]
{Homomorphisms of quantum hypergraphs}


\author[G. Hoefer]{Gage Hoefer}
\address{Department of Mathematical Sciences\\ University of Delaware\\ 501 Ewing Hall\\ Newark\\ DE 19716\\ USA}\email{ghoefer@udel.edu}

\author[I.G. Todorov]{Ivan G. Todorov}
\address{Department of Mathematical Sciences\\ University of Delaware\\ 501 Ewing Hall\\ Newark\\ DE 19716\\ USA} 
\email{todorov@udel.edu}

\date{10 November 2023}

\thanks{2010 {\it  Mathematics Subject Classification.} 81P45, 15A04, 94A40, 91A12}

\begin{abstract} 
We introduce quantum homomorphisms between quantum hypergraphs
through the existence of perfect strategies for quantum non-local games, 
canonically associated with the quantum hypergraphs. 
We show that the relation of homomorphism of a given type satisfies natural analogues of the 
properties of a pre-order. We show that quantum hypergraph homomorphisms of 
local type are closely related, and in some cases identical, to the TRO equivalence 
of finite dimensionally acting operator spaces, canonically associated with the hypergraphs. 
\end{abstract}

\maketitle

\tableofcontents


\section{Introduction}\label{s_intro}

The quantum chromatic number of a graph was introduced in \cite{cmnsw}, 
initiating a wealth of interactions
between graph theory and quantum information theory.
In an influential paper in the field of non-local games \cite{mr}, 
Man\v cinska and Roberson extended the setup of \cite{cmnsw} and defined 
quantum versions of graph homomorphisms. 
The motivation behind these developments is rooted 
in questions about non-locality in quantum mechanics. 
In fact, different models of quantum mechanics lead to 
a hierarchy of no-signalling correlation types between the players of a 
non-local game which, in their own right, can be used to 
witness the distinctions of the quantum mechanical models through 
non-local game theory. 
The main classes used are those of local ($\cl C_{\rm loc}$), 
quantum ($\cl C_{\rm q}$), approximately quantum ($\cl C_{\rm qa}$) and quantum 
commuting ($\cl C_{\rm qc}$) correlations. 
These approaches have led to a proof of 
Bell's Theorem ($\cl C_{\rm loc} \subsetneq \cl C_{\rm q}$) \cite{bell} via the CHSH game \cite{chsh} and, recently, to 
an answer to the Tsirelson, and therefore Connes Embedding, Problem 
in quantum physics and operator algebra theory, respectively ($\cl C_{\rm qa} \subsetneq \cl C_{\rm qc}$) \cite{jnvwy}.

Motivated by the use of graphs in zero-error information theory which was pioneered by Shannon \cite{shannon}, 
a quantum version of graphs was proposed in \cite{dsw}; non-commutative graphs
introduced therein are linear subspaces of the space $M_n$ of all $n$ by $n$ complex matrices, 
which contain the identity matrix and are invariant under the adjoint operation, that is, they are 
finite dimensionally acting operator systems \cite{Pa}. 
Linear algebraic methods have thus been paramount in the study of 
these objects, leading to significant progress in quantised combinatorics (see e.g. \cite{btw, btw2})
and quantum graph theory through the lens of non-local games \cite{bhtl, tl}.

A quantisation of homomorphisms between (classical) hypergraphs, 
from the perspective of non-local game theory, was proposed by the authors in \cite{ght}. 
Hypergraph homomorphisms and isomorphisms, assisted by a given no-signalling correlation class, 
were introduced, characterised in operator algebraic terms, and applications to 
one-shot and asymptotic values of non-local games were provided. 
In the present paper, we introduce a quantum version of hypergraphs, 
and of homomorphisms between quantum hypergraphs, assisted by a given 
quantum no-signalling correlation class. 
This is achieved by requiring that a quantum non-local game, canonically associated with the 
given quantum hypergraphs, possesses a perfect strategy of a given type. 
We use the hierarchy of quantum no-signalling correlations exhibited in \cite{tl}, 
and the quantum channel simulation paradigm
introduced in \cite{dw}. 
Similarly to the case of classical no-signalling correlations, quantum ones 
admit a natural correlation chain 
$\cl Q_{\rm loc}\subseteq \cl Q_{\rm q} \subseteq \cl Q_{\rm qa}  \subseteq \cl Q_{\rm qc}\subseteq \cl Q_{\rm ns}$.
We show that there is a natural operation of composition of correlations, coherent with 
channel simulation, that preserves each one of the aforementioned classes.
This allows us to show that quantum hypergraph homomorphisms of any given type 
satisfy natural analogues of the properties of a pre-order. 
Finally, we characterise the existence of a quantum hypergraph homomorphism of local type, 
showing that it is closely related, and under certain conditions identical, to 
TRO equivalence between operator spaces, canonically associated by the quantum hypergraphs. 
We note that TRO equivalence \cite{elef} is a concrete operator version of one of the main 
equivalences of Morita type between operator spaces (see \cite{ept}). 
Our results point to an operational, quantum information, route towards Morita equivalence in the 
operator space category. 

The definition of quantum hypergraph homomorphisms relies on the intermediate notion of 
quasi-homomorphisms, pertinent to zero-error information questions (see \cite{ght}). 
We exhibit examples of separation between local and quantum quasi-homomorphisms, as well as between 
quantum and no-signalling ones; the same question in the case of 
homomorphisms remains, however, open.

The paper is organised as follows. 
Section \ref{s_prel} contains preliminary notions and observations.
In Section \ref{s_sqc} we discuss quantum channel simulation and show 
that simulators can be composed with preservation of type, a result we believe may be of interest in its own right.
Section \ref{s_quantize} is dedicated to the main properties of quantum hypergraph homomorphisms and 
their relation with quantum homomorphisms of classical hypergraphs. 
Finally, Section \ref{s_Morita} contains the aforementioned connection between local homomorphisms 
and TRO equivalence of subspaces of matrix algebras.

\subsection*{Acknowledgements}
The research on the topic of the paper was supported by NSF grants CCF-2115071 and DMS-2154459.


\section{Preliminaries}\label{s_prel}

For a finite set $X$, we let $\bb{C}^X = \oplus_{x\in X}\bb{C}$ and write $(e_x)_{x\in X}$ for the canonical 
orthonormal basis of $\bb{C}^X$. 
We denote by 
$M_X$ the algebra of all complex matrices over $X\times X$, 
and by $\cl D_X$ its subalgebra of all diagonal matrices. 
We write $\epsilon_{x,x'}$, $x,x'\in X$, for the canonical matrix units in $M_X$, denote by 
$\Tr$ the trace functional on $M_X$, and set $\langle S,T\rangle = \Tr(ST^{\rm t})$ (here $T^{\rm t}$ denotes the transpose of $T$ in the canonical basis). For a Hilbert space $H$, let $\cl B(H)$ be 
the C*-algebra of all bounded linear operators on $H$, and denote by $I_{H}$ the identity operator on $H$. 
An \emph{operator system} in $\cl B(H)$ is a selfadjoint linear subspace $\cl S$ of $\cl B(H)$ such that 
$I_H\in\cl S$. 
We write $\overline{H}$ for the dual Banach space of $H$; 
by virtue of the Riesz Representation Theorem, there exists a conjugate linear isometry 
$\partial : H\rightarrow \overline{H}$, such that $\partial(\xi)(\eta) = \langle\eta, \xi\rangle, \xi, \eta \in H$. 
In what follows, we will write $\overline{\bb{C}}^{X} = \overline{\bb{C}^{X}}$. 
We set $\overline{\xi} = \partial(\xi)$. 
Given a linear operator $A: H\rightarrow K$, let $\overline{A}: \overline{K}\rightarrow \overline{H}$ be its (Banach space) dual operator.
We note the formula $\overline{A}(\overline{\xi}) = \overline{A^*\xi}$.

We denote by $\cl V\otimes \cl W$ the algebraic tensor product of vector spaces $\cl V$ and $\cl W$, 
except when $\cl V$ and $\cl W$ are Hilbert spaces, in which case the notation is used for their Hilbertian 
tensor product. 
If $\cl S\subseteq \cl B(H)$ and $\cl T\subseteq \cl B(K)$ are operator systems (where $H$ and $K$ are 
Hilbert spaces), we let $\cl S\otimes_{\min}\cl T$ be the \emph{minimal tensor product} of $\cl S$ and $\cl T$
\cite{kptt}- that is, the operator system arising from the inclusion of $\cl S\otimes\cl T$ into $\cl B(H\otimes K)$. 
Given sets $X_i$, $i = 1,\dots,n$, we abbreviate 
$X_1\cdots X_n = X_1 \times \cdots \times X_n$, and 
write $M_{X_1\cdots X_n} = \otimes_{i=1}^n M_{X_i}$ and $\cl D_{X_1\cdots X_n} = \otimes_{i=1}^n \cl D_{X_i}$. 
We let $L_{\omega} : M_{X_1X_2}\to M_{X_1}$ be 
the slice map with respect to a given element $\omega\in M_{X_2}\equiv \cl L(\bb{C}^{X_2})^*$; thus, 
$$L_{\omega}(T_1\otimes T_2) = \langle \omega,T_2\rangle T_1, \ \ T_i\in M_{X_i}, i = 1,2.$$
The partial trace ${\rm Tr}_{X_2} : M_{X_1X_2}\to M_{X_1}$ is the slice map with respect to the identity operator $I_{X_2}$
of $M_{X_2}$. Abusing notation slightly, we also let $L_{u} : H\otimes K\rightarrow K$ be the slice map with respect to a given element $\overline{u} \in \overline{H}$.

Given finite dimensional Hilbert spaces $H$ and $K$, 
and vectors $\xi \in H$ and $\eta \in K$, let $\eta\xi^{*}: H \rightarrow K$ be the 
rank one operator, given by
$(\eta\xi^{*})(\xi') = \langle \xi', \xi\rangle \eta$. 
Let $\theta: \overline{H}\otimes K\rightarrow \cl{L}(H, K)$ be the linear isomorphism given by
$$ \theta(\overline{\xi}\otimes \eta) = \eta\xi^{*}, \;\;\;\; \xi \in H, \eta \in K.$$
The identities in the next lemma will be used throughout; we include the proof 
for the convenience of the reader.

\begin{lemma}\label{l_bars}
Let $X$ and $Y$ be finite sets, 
$u \in \bb{C}^{X_{1}}\otimes \overline{\bb{C}}^{Y_{1}}$, 
$A\in \cl L(\bb{C}^{X_2},\bb{C}^{X_1})$ and $B\in \cl L(\bb{C}^{Y_1},\bb{C}^{Y_2})$. 
Then
$\theta(\overline{u}) = \overline{\theta(u)}^{*}$ and 
$\theta((\overline{A}\otimes B)\overline{u}) = B\theta(\overline{u})A$.
\end{lemma}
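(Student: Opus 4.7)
My plan is to verify both identities by reducing to elementary tensors and then performing a direct calculation. All expressions in the two claimed equalities are linear in $u$: conjugation $u\mapsto\overline{u}$, the tensor action of $\overline{A}\otimes B$, and $\theta$ itself are linear, while the outer operation $T\mapsto\overline{T}^{*}$ is the composition of two conjugate-linear maps and hence linear. It therefore suffices to verify the identities for elementary tensors $u = \xi\otimes\overline{\eta}$ with $\xi\in\bb{C}^{X_{1}}$ and $\eta\in\bb{C}^{Y_{1}}$. Using the distributivity of conjugation over tensor products ($\overline{h\otimes k} = \overline{h}\otimes \overline{k}$) together with $\overline{\overline{\eta}} = \eta$, one obtains $\overline{u} = \overline{\xi}\otimes \eta$, and the definition of $\theta$ on elementary tensors gives $\theta(\overline{u}) = \eta\xi^{*}$.

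For the second identity, I apply $\overline{A}\otimes B$ factor by factor and invoke the rule $\overline{A}(\overline{\xi}) = \overline{A^{*}\xi}$ recorded just above the lemma to get
$$(\overline{A}\otimes B)\overline{u} = \overline{A}(\overline{\xi})\otimes B\eta = \overline{A^{*}\xi}\otimes B\eta.$$
Applying $\theta$ to this elementary tensor produces the rank-one operator $(B\eta)(A^{*}\xi)^{*} = B\eta\xi^{*}A = B\theta(\overline{u})A$, which is the stated identity.

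For the first identity, I compute $\theta(u)$ from the definition of $\theta$, then use the formula $\overline{A}(\overline{\xi}) = \overline{A^{*}\xi}$ to write down $\overline{\theta(u)}$ explicitly, and finally take its Hilbert adjoint. A short unfolding—keeping careful track of which of the four spaces $\bb{C}^{X_{1}}$, $\overline{\bb{C}}^{X_{1}}$, $\bb{C}^{Y_{1}}$, $\overline{\bb{C}}^{Y_{1}}$ each intermediate object lives in—reveals that $\overline{\theta(u)}^{*}$ collapses to the rank-one operator $\eta\xi^{*}$, matching $\theta(\overline{u})$. The main obstacle in the proof is purely notational: one has to track how the bar operation migrates through tensor products, between a Hilbert space and its conjugate, and between the Hilbert adjoint and the Banach dual. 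Once the conventions are pinned down, both identities follow from the definitions in a few lines, and extend to all of $\bb{C}^{X_{1}}\otimes\overline{\bb{C}}^{Y_{1}}$ by the linearity observation above.
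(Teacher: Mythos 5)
Your proof is correct and follows essentially the same route as the paper's: reduce to elementary tensors and compute directly using $\overline{A}(\overline{\xi}) = \overline{A^{*}\xi}$. One small but genuine error is in the opening linearity discussion: $u\mapsto\overline{u}$ is \emph{conjugate}-linear (since $\partial$ is conjugate-linear), not linear, and $T\mapsto\overline{T}$ (the Banach dual, with the paper's identification of $H^{*}$ with $\overline{H}$) is in fact \emph{linear}, not conjugate-linear, so $T\mapsto\overline{T}^{*}$ is conjugate-linear, not "a composition of two conjugate-linear maps." The net effect is that both sides of each identity are conjugate-linear in $u$ rather than linear; since they agree in type, the reduction to elementary tensors is still valid and the rest of your argument goes through, but the stated justification should be corrected.
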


\begin{proof}
Let $u = \sum_{i=1}^m \lambda_i\xi_{i}\otimes \overline{\eta}_i \in \bb{C}^{X_{1}}\otimes \overline{\bb{C}}^{Y_{1}}$.
Then $\overline{u} = \sum_{i=1}^m\overline{\lambda}_{i}\overline{\xi}_{i}\otimes \eta_{i}$, and so 
$$\overline{\theta(u)}^{*}
= 
\overline{\sum_{i=1}^m \lambda_{i}\overline{\eta}_{i}\overline{\xi}_{i}^{*}}^*
= 
\left(\sum_{i=1}^m \lambda_{i}\xi_{i}\eta_{i}^*\right)^{*}\\
= 
\sum_{i=1}^m \overline{\lambda_{i}}\eta_{i}\xi_{i}^* = \theta(\overline{u}).$$

For the second identity, note that, if $\gamma \in \bb{C}^{X_{2}}$, then 
\begin{eqnarray*}
\theta((\overline{A}\otimes B)\overline{u})(\gamma)
& = &
\sum_{i=1}^m\overline{\lambda}_{i}\theta((\overline{A}\otimes B)(\overline{\xi}_{i}\otimes \eta_{i}))(\gamma)\\
& = &
\sum_{i=1}^m\overline{\lambda}_{i}\theta(\overline{A^*\xi}_{i}\otimes B\eta_{i})(\gamma)
= 
\sum_{i=1}^m\overline{\lambda}_{i} (B\eta_{i})(A^*\xi_{i})^*(\gamma)\\
& = & 
\sum_{i=1}^m\overline{\lambda}_{i} \langle \gamma, A^*\xi_{i}\rangle B\eta_{i}
= 
\sum_{i=1}^m\overline{\lambda}_{i} \langle A\gamma,\xi_{i}\rangle B\eta_{i}\\
& = & 
\sum_{i=1}^m\overline{\lambda}_{i} B(\eta_i \xi_i^*)A(\gamma)
=
B\theta(\overline{u})A(\gamma).
\end{eqnarray*}
\end{proof}

In the remainder of this section, we recall the basic types of quantum and classical no-signalling 
correlations that will be used in the sequel. 
Let $X, Y, A$ and $B$ be finite sets. 
A \textit{quantum no-signalling (QNS) correlation} \cite{dw} is a quantum channel 
$\Gamma: M_{XY}\rightarrow M_{AB}$ such that
\begin{gather}
	{\rm Tr}_{A}\Gamma(\rho_{X}\otimes \rho_{Y}) = 0 \text{ whenever } \rho_{X} \in M_{X} \text{ and } {\rm Tr}(\rho_{X}) = 0,
\end{gather}
\noindent and
\begin{gather}
	{\rm Tr}_{B}\Gamma(\rho_{X}\otimes \rho_{Y}) = 0 \text{ whenever } \rho_{Y} \in M_{Y} \text{ and } {\rm Tr}(\rho_{Y}) = 0.
\end{gather}
We set 
$$\Gamma(aa',bb'|xx',yy') = \langle \Gamma(\epsilon_{x,x'} \otimes \epsilon_{y,y'}),\epsilon_{a,a'} \otimes \epsilon_{b,b'}
\rangle;$$
thus, 
$(\Gamma(aa',bb'|xx',yy'))_{x,x',a,a'}^{y,y',b,b'}$ is the Choi matrix of $\Gamma$ (see e.g. \cite{Pa}).
A \emph{stochastic operator matrix} acting on a Hilbert space $H$ 
is a positive block operator matrix $E = (E_{x, x', a, a'})_{x, x', a, a'} \in M_{XA}(\cl{B}(H))$ such that 
${\rm Tr}_{A}E = I_{X} \otimes I_{H}$.
A QNS correlation $\Gamma: M_{XY}\rightarrow M_{AB}$ is called 
\emph{quantum commuting} if there exists a Hilbert space $H$, a unit vector $\xi \in H$ and stochastic operator matrices $\tilde{E} = (E_{x, x', a, a'})_{x, x', a, a'}$ and $\tilde{F} = (F_{y, y', b, b'})_{y, y', b, b'}$ on $H$ such that 
$$E_{x, x', a, a'}F_{y, y', b, b'} = F_{y, y', b, b'}E_{x, x', a, a'}$$
for all $x, x' \in X, y, y' \in Y, a, a' \in A, b, b' \in B$, and
\begin{equation}\label{eq_EFp}
\Gamma(\epsilon_{x,x'} \otimes \epsilon_{y,y'}) = \sum_{a,a'\in A} \sum_{b,b'\in B}
\left\langle E_{x,x',a,a'}F_{y,y',b,b'}\xi,\xi \right\rangle \epsilon_{a,a'} \otimes \epsilon_{b,b'}, 
\end{equation}
for all $x,x' \in X$ and all $y,y' \in Y$.
\emph{Quantum} QNS correlations are defined as in (\ref{eq_EFp}), but 
requiring that $H$ has the form $H_A\otimes H_B$, for some finite dimensional Hilbert spaces $H_A$ and $H_B$, and 
$E_{x,x',a,a'} = \tilde{E}_{x,x',a,a'} \otimes I_B$ and 
$F_{y,y',b,b'} = I_A \otimes \tilde{F}_{y,y',b,b'}$, for some stochastic operator matrices 
$(\tilde{E}_{x,x',a,a'})$ and 
$(\tilde{F}_{y,y',b,b'})$, acting on $H_A$ and $H_B$, respectively. 
\emph{Approximately quantum} QNS correlations are the 
limits of quantum QNS correlations, while \emph{local} QNS correlations are 
the convex combinations of the form 
$\Gamma = \sum_{i=1}^k \lambda_i \Phi_i \otimes \Psi_i$, 
where $\Phi_i : M_X\to M_A$ and $\Psi_i : M_Y\to M_B$ are quantum channels, $i = 1,\dots,k$.
We write $\cl Q_{\rm qc}$ (resp. $\cl Q_{\rm qa}$, $\cl Q_{\rm q}$, $\cl Q_{\rm loc}$) for the (convex) set of all quantum commuting (resp. approximately quantum, quantum, local) QNS correlations, and note the (strict, see \cite{tl})
inclusions 
\begin{equation}\label{eq_Qinc}
\cl Q_{\rm loc}\subseteq \cl Q_{\rm q}\subseteq \cl Q_{\rm qa}\subseteq \cl Q_{\rm qc}\subseteq \cl Q_{\rm ns}.
\end{equation}
Let
$$	\cl{L}_{X, A} = \bigg\{(\lambda_{x, x', a, a'}) \in M_{XA} :  \exists \;c \in \bb{C} \;\text{s.t.} \;
	\sum_{a\in A}\lambda_{x, x', a, a} = \delta_{x, x'}c, \; x, x' \in X\bigg\},$$
and consider it as an operator subsystem of $M_{XA}$. 
By \cite[Proposition 5.5, Theorem 6.2]{tl}, the elements $\Gamma$ of $\cl Q_{\rm ns}$ 
correspond canonically to elements of the tensor product  $\cl{L}_{X, A}\otimes_{\min} \cl L_{Y,B}$
(viewed as an operator subsystem of $M_{XA}\otimes M_{YB}$). 

If $X$ and $Y$ are finite sets, we write $\Delta_X : M_X\to \cl D_X$ for the diagonal expectation. 
Given a classical information channel $\cl N : \cl D_X\to \cl D_Y$, we let 
$\Gamma_{\cl N} = \cl N\circ \Delta_X$; thus,
$\Gamma_{\cl N} : M_X\to M_Y$ is a quantum channel. Conversely, given a quantum channel 
$\Gamma : M_X\to M_Y$, let $\cl N_{\Gamma} = \Delta_Y\circ \Gamma|_{\cl D_X}$; thus, 
$\cl N_{\Gamma} : \cl D_X\to \cl D_Y$ is a classical information channel. 
For a classical channel $\cl N : \cl D_X\to \cl D_Y$, we write 
$\cl N(y|x) = \langle \cl N(\epsilon_{x,x}),\epsilon_{y,y}\rangle$ and set 
$${\rm supp}(\cl N) = \{(x,y) :  \cl N(y|x) \neq 0\}$$
to be the \emph{support} of $\cl N$. 

A \emph{classical no-signalling correlation} over $(X,Y,A,B)$ 
is a classical information channel 
$\cl N : \cl D_{XY}\to \cl D_{AB}$ such that $\Gamma_{\cl N}$ is a QNS correlation. 
If ${\rm t}\in \{{\rm loc, q, qa, qc, ns}\}$, we let $\cl C_{\rm t}$ be the collection of all 
classical NS correlations $\cl N$ for which $\Gamma_{\cl N}\in \cl Q_{\rm t}$.


\section{Channel simulation}\label{s_sqc}

In this section, we extend the results of \cite[Section 2]{ght}, showing that 
the QNS correlations of a given type can be composed in a way consistent 
with the quantum simulation paradigm introduced in \cite{dw}. 
We will need a slight extension of a lemma from \cite{tl}. 

\begin{lemma}\label{twisted_comp}
Let $X, Y, Z$ be finite sets, $H$ and $K$ be Hilbert spaces, 
and $E \in M_{X}\otimes M_{Y}\otimes \cl{B}(H)$ and $F \in M_{Y}\otimes M_{Z}\otimes \cl{B}(K)$ be stochastic operator matrices. Set
$$G_{x, x', z, z'} = \sum\limits_{y, y' \in Y}F_{y, y', z, z'}\otimes E_{x, x', y, y'}, \;\;\;\; x, x' \in X, \; z, z' \in Z,$$
and
$$G_{x, x', z, z'}^{\tau} = \sum\limits_{y, y' \in Y}E_{x, x', y, y'}\otimes F_{y, y', z, z'}, \;\;\;\; x, x' \in X, \; z, z' \in Z.$$
\noindent Then 
$G := (G_{x, x', z, z'})_{x, x', z, z'}$ 
and 
$G^{\tau} := (G_{x, x', z, z'}^{\tau})_{x, x', z, z'}$ are stochastic operator matrices in $M_{X}\otimes M_{Z} \otimes \cl{B}(H\otimes K)$. 
\end{lemma}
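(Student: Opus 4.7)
The plan is to verify, for each of $G$ and $G^\tau$, the two defining properties of a stochastic operator matrix: positivity in $M_{XZ}(\cl B(H\otimes K))$ and the partial trace identity. The partial trace condition is a direct calculation; for $G^\tau$,
\[\sum_{z}G^\tau_{x,x',z,z}=\sum_{y,y'}E_{x,x',y,y'}\otimes\Big(\sum_{z}F_{y,y',z,z}\Big)=\sum_{y}E_{x,x',y,y}\otimes I_K=\delta_{x,x'}I_H\otimes I_K,\]
by consecutive application of ${\rm Tr}_ZF=I_Y\otimes I_K$ and ${\rm Tr}_YE=I_X\otimes I_H$. The computation for $G$ is identical after reordering the tensor factors via the canonical flip $\cl B(K)\otimes\cl B(H)\cong\cl B(H\otimes K)$.

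For positivity, the plan is to use a Kraus-type factorisation of the block-positive matrices. Since $E\in M_{XY}(\cl B(H))$ is positive, writing $E=WW^*$ for a suitable $W$ on $\bb C^X\otimes\bb C^Y\otimes H$ with values in some auxiliary Hilbert space $L$ yields operators $R_i(x,y)\in\cl B(L,H)$ with $E_{x,x',y,y'}=\sum_iR_i(x,y)R_i(x',y')^*$. Analogously, $F_{y,y',z,z'}=\sum_jS_j(y,z)S_j(y',z')^*$ for some $S_j(y,z)\in\cl B(L',K)$. Setting $T_{i,j}(x,z):=\sum_y R_i(x,y)\otimes S_j(y,z)\in\cl B(L\otimes L',H\otimes K)$, a direct expansion gives
\[T_{i,j}(x,z)T_{i,j}(x',z')^*=\sum_{y,y'}R_i(x,y)R_i(x',y')^*\otimes S_j(y,z)S_j(y',z')^*,\]
and summation over $i,j$ reproduces $G^\tau_{x,x',z,z'}$, exhibiting $G^\tau$ as a sum of positive ``rank one'' block matrices. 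The analogous choice $T'_{i,j}(x,z):=\sum_y S_j(y,z)\otimes R_i(x,y)$ establishes positivity of $G$ in $M_{XZ}(\cl B(K\otimes H))$, which is then transported to the desired algebra by the flip.

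The only substantive bookkeeping concerns the ordering of tensor factors that distinguishes $G$ from $G^\tau$; beyond that, the argument is essentially a routine adaptation of the original lemma from \cite{tl}, and no serious obstacle is expected.
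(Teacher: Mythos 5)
Your proof is correct. The approach, however, differs from the paper's: the paper invokes the dilation-type characterisation of stochastic operator matrices from \cite[Theorem 3.1]{tl}, writing $E_{x,x',y,y'} = V_{y,x}^* V_{y',x'}$ and $F_{y,y',z,z'} = W_{z,y}^* W_{z',y'}$ for block operator \emph{isometries} $V$ and $W$, then forms $U_{z,x} = \sum_y V_{y,x}\otimes W_{z,y}$ and checks $\sum_z U_{z,x}^* U_{z,x'} = \delta_{x,x'} I_{H\otimes K}$. Because of the characterisation theorem, a single calculation (the isometry check) simultaneously delivers positivity and the partial trace identity. You instead work straight from the definition: you verify the partial trace identity separately and then establish positivity from a mere positive factorisation $E = WW^*$, which requires no structure theorem and no reference to \cite{tl}. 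The two constructions of the intertwining block matrix are essentially the same modulo adjoints, but your route is more self-contained: it also handles $G$ directly (by swapping the tensor legs), whereas the paper defers the statement for $G$ to \cite[Lemma 10.16]{tl}. The trade-off is that the paper's proof is shorter once one accepts the cited characterisation.

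Two small points of hygiene. The phrase ``$W$ on $\bb C^X\otimes\bb C^Y\otimes H$ with values in $L$'' has the domain and codomain reversed: if $E = WW^*$ acts on $\bb C^X\otimes\bb C^Y\otimes H$, then $W$ must map $L$ into $\bb C^X\otimes\bb C^Y\otimes H$. Also, the auxiliary index $i$ in $E_{x,x',y,y'} = \sum_i R_i(x,y)R_i(x',y')^*$ is superfluous: taking $W = E^{1/2}$ and letting $R(x,y)$ be its $(x,y)$-th block row already gives $E_{x,x',y,y'} = R(x,y)R(x',y')^*$ with no sum, which avoids any worry about convergence of infinite Kraus sums when $H$ is infinite dimensional. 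With that simplification, $(T(x,z))_{x,z}$ with $T(x,z) = \sum_y R(x,y)\otimes S(y,z)$ directly realises $G^\tau$ as $TT^*$, which is manifestly positive.
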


\begin{proof}
The statement concerning $G$ is
contained in \cite[Lemma 10.16]{tl}. 

By \cite[Theorem 3.1]{tl}, 
there exist a Hilbert space $\tilde{H}$ (resp. $\tilde{K}$) and 
a block operator isometry $V = (V_{y, x})_{y, x}$ (resp. $W = (W_{z, y})_{z, y}$) 
from $H^{X}$ (resp. $K^{Y}$) to $\tilde{H}^{Y}$ (resp. $\tilde{K}^{Z}$), such that
$$E_{x, x', y, y'} = V_{y, x}^{*}V_{y', x'}, \;\;\;\; F_{y, y', z, z'} = W_{z, y}^{*}W_{z', y'}$$
for all $x, x' \in X, y, y' \in Y$ and $z, z' \in Z$.
Set
$$U_{z,x}  = \sum_{y\in Y} V_{y, x}\otimes W_{z, y}, \;\;\;\; x \in X, z \in Z;$$
then 
\begin{eqnarray*}
\sum_{z\in Z} U_{z,x}^*U_{z,x'} 
& = & 
\sum_{z\in Z} \left(\sum_{y\in Y} V_{y, x}\otimes W_{z, y}\right)^*
\left(\sum_{y'\in Y} V_{y', x'}\otimes W_{z, y'}\right)\\
& = & 
\sum_{z\in Z} \sum_{y,y'\in Y} V_{y, x}^*V_{y', x'} \otimes W_{z, y}^*W_{z, y'}\\
& = & 
\sum_{y,y'\in Y} V_{y, x}^*V_{y', x'} \otimes \left(\sum_{z\in Z} W_{z, y}^*W_{z, y'}\right)\\
& = & 
\delta_{y,y'}\sum_{y,y'\in Y} V_{y, x}^*V_{y', x'} \otimes I_K 
= 
\delta_{x,x'} I_H\otimes I_K.
\end{eqnarray*}
Thus, $(U_{z, x})_{z, x}$ is an isometry from $(H\otimes K)^{X}$ to $(\tilde{H}\otimes \tilde{K})^{Z}$ 
with $U_{z, x}^{*}U_{z', x'} = G_{x, x', z, z'}^{\tau}$, $x,x'\in X$, $z,z'\in Z$. 
\end{proof}

The authors of \cite{tl} call the stochastic operator matrix $G$ defined in Lemma \ref{twisted_comp}
the \textit{composition} of $E$ and $F$, denoted $F\circ E$; analogously, we call the stochastic operator matrix 
$G^{\tau}$ the \textit{twisted composition} of $E$ and $F$, and denote it by $F\circ^{\tau} E$. 

Let $\Gamma$ be a QNS correlation 
over the quadruple $(X_{2}, Y_{1}, X_{1}, Y_{2})$ and $\cl{E}: M_{X_{1}}\rightarrow M_{Y_{1}}$
be a quantum channel. Writing $\Gamma = \sum_{i=1}^m \Phi_i\otimes \Psi_i$, where 
$\Phi_i : M_{X_2}\to M_{X_1}$ and $\Psi_i : M_{Y_1}\to M_{Y_2}$ are some linear maps 
(that are not necessarily completely positive), 
we let $\Gamma[\cl{E}] : M_{X_{2}}\rightarrow M_{Y_{2}}$ be linear map, defined by letting
\begin{equation}\label{eq_defsim}
\Gamma[\cl{E}] = \sum_{i=1}^m \Psi_i \circ \cl E \circ \Phi_i.
\end{equation}
It was shown in \cite{dw} that $\Gamma[\cl{E}]$ is a quantum channel, called therein 
the channel \emph{simulated from $\cl E$ with the assistance of $\Gamma$}. 
We call $\Gamma$ a \emph{simulator}, and 
write $(X_{1}\rightarrow Y_{1}) \stackrel{\Gamma}{\rightarrow} (X_{2}\rightarrow Y_{2})$.

Suppose that $(X_{1}\rightarrow Y_{1})\stackrel{\Gamma_{1}}{\rightarrow} (X_{2}\rightarrow Y_{2})$ 
and $(X_{2}\rightarrow Y_{2})\stackrel{\Gamma_{2}}{\rightarrow}(X_{3}\rightarrow Y_{3})$. 
We let $\Gamma_{2}\ast \Gamma_{1}: M_{X_{3}Y_{1}}\rightarrow M_{X_{1}Y_{3}}$ be the linear map
such that 
\begin{eqnarray*}
& &
\hspace{-1.4cm} 
\left\langle(\Gamma_{2}\ast\Gamma_{1})(\epsilon_{x_{3}, x_{3}'}\otimes \epsilon_{y_{1}, y_{1}'}),
\epsilon_{x_{1}, x_{1}'}\otimes \epsilon_{y_{3}, y_{3}'}\right\rangle = \\ 
& & 
\hspace{-0.6cm} 
\sum_{y_{2}, y_{2}'\in Y_2} \sum_{x_{2}, x_{2}'\in X_2}
\Gamma_{1}(x_{1}x_{1}', y_{2}y_{2}'|x_{2}x_{2}', y_{1}y_{1}')
\Gamma_{2}(x_{2}x_{2}', y_{3}y_{3}'|x_{3}x_{3}', y_{2}y_{2}')
\end{eqnarray*}

\begin{lemma}\label{cp_tensor_map}
For a finite set $X$, the linear functional $\phi: M_{X}\otimes M_{X}\rightarrow \bb{C}$, given by 
\begin{gather*}
	P\otimes Q \mapsto {\rm Tr}(PQ^{\rm t}), \ \ \ P,Q\in M_X,
\end{gather*}
is positive. 
\end{lemma}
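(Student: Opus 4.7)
The plan is to realize $\phi$ as a vector functional on $M_X \otimes M_X \subseteq \mathcal{B}(\mathbb{C}^X \otimes \mathbb{C}^X)$ coming from the (unnormalised) maximally entangled vector, from which positivity is immediate.

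More precisely, I would set $\xi := \sum_{x \in X} e_x \otimes e_x \in \bb{C}^X \otimes \bb{C}^X$ and define $\omega_\xi : \cl B(\bb{C}^X \otimes \bb{C}^X) \to \bb{C}$ by $\omega_\xi(T) = \langle T\xi, \xi\rangle$. This is manifestly a positive linear functional, so its restriction to the *-subalgebra $M_X \otimes M_X$ is positive as well. The claim will follow once I verify the identity
\begin{equation*}
\phi(P \otimes Q) \;=\; \omega_\xi(P \otimes Q), \qquad P, Q \in M_X.
\end{equation*}

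This verification is a direct computation: writing $P = \sum_{x,x'} P_{x',x}\, \epsilon_{x',x}$ and $Q = \sum_{x,x'} Q_{x',x}\, \epsilon_{x',x}$ with respect to the canonical basis, one has on the one hand
\begin{equation*}
\Tr(PQ^{\mathrm{t}}) \;=\; \sum_{x, x' \in X} P_{x',x}\, Q_{x',x},
\end{equation*}
and on the other hand
\begin{equation*}
\langle (P \otimes Q)\xi, \xi\rangle
\;=\; \sum_{x \in X} \langle P e_x \otimes Q e_x, \xi\rangle
\;=\; \sum_{x, x' \in X} \langle Pe_x, e_{x'}\rangle \langle Qe_x, e_{x'}\rangle
\;=\; \sum_{x, x' \in X} P_{x',x}\, Q_{x',x}.
\end{equation*}
The two expressions agree, so $\phi = \omega_\xi|_{M_X \otimes M_X}$, and positivity follows.

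There is no real obstacle here; the only conceptual step is spotting that transposing one leg corresponds exactly to folding the two copies of $\bb{C}^X$ together via the maximally entangled vector. Everything else is a one-line bookkeeping check with the matrix units.
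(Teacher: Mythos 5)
Your proof is correct, and it takes a slightly different (and arguably cleaner) route than the paper's. The paper verifies positivity of $\phi$ directly on each rank-one positive $\xi\xi^*$ by expanding $\xi = \sum_{x,x'}\lambda_{x,x'}e_x\otimes e_{x'}$, computing $\phi(\xi\xi^*) = \bigl|\sum_x \lambda_{x,x}\bigr|^2 \geq 0$ with the matrix units, and then invoking that every element of $(M_X\otimes M_X)^+$ is a sum of rank-one positives. You instead identify $\phi$ \emph{globally} as the vector functional $\omega_\xi$ associated with the maximally entangled vector $\xi = \sum_x e_x\otimes e_x$, which makes positivity immediate and dispenses with the rank-one decomposition step entirely. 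The two arguments are dual views of the same phenomenon: the paper's scalar $\sum_x \lambda_{x,x}$ is exactly $\langle \eta,\xi\rangle$ for the rank-one vector $\eta$, so its computation is secretly the special case of yours on projections. Your version is the more structural statement -- it exhibits $\phi$ as a state (up to normalisation by $|X|$), not merely as positive -- and it is the one that generalises most readily. Both are sound; your presentation is the shorter and more illuminating of the two.
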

\begin{proof}
Let $\xi = \sum_{x,x'\in X}\lambda_{x,x'} e_{x}\otimes e_{x'} \in \bb{C}^{X}\otimes \bb{C}^{X}$;
then 
\begin{gather*}
	\xi\xi^{*} = \sum\limits_{x,x'\in X} \sum\limits_{y,y'\in X}
	\lambda_{x,x'}\overline{\lambda_{y,y'}}\epsilon_{x,y}\otimes \epsilon_{x',y'}, 
\end{gather*}
and hence
\begin{eqnarray*}
\phi(\xi\xi^{*})
& = &
\sum\limits_{x,x'\in X}\sum\limits_{y,y'\in X}\lambda_{x,x'}\overline{\lambda_{y,y'}}
{\rm Tr}(\epsilon_{x,y}\epsilon_{y'x'}) 
= 
\sum\limits_{x\in X}\sum\limits_{y\in X}\lambda_{x,x}\overline{\lambda_{y,y}}\\
& = & 
\bigg|\sum\limits_{x\in X}\lambda_{x,x}\bigg|^{2} \geq 0.
\end{eqnarray*}
The claim follows from the fact that any $T \in (M_{X}\otimes M_{X})^{+}$ 
is a sum of positive rank-one operators.
\end{proof}


\begin{theorem}\label{comp_thm}
If $(X_{1}\rightarrow Y_{1})\stackrel{\Gamma_{1}}{\rightarrow} (X_{2}\rightarrow Y_{2})$ and $(X_{2}\rightarrow Y_{2})\stackrel{\Gamma_{2}}{\rightarrow}(X_{3}\rightarrow Y_{3})$ then
$(X_{1}\rightarrow Y_{1})\stackrel{\Gamma_{2}\ast\Gamma_{1}}{\longrightarrow}(X_{3}\rightarrow Y_{3})$.
Furthermore, 
\begin{enumerate}
	\item[(i)] if ${\rm t} \in \{\rm loc, q, qa, qc, ns\}$ and $\Gamma_{i} \in \cl{Q}_{\rm t}$ for $i = 1, 2$, then $\Gamma_{2}\ast\Gamma_{1} \in \cl{Q}_{\rm t}$;
	\item[(ii)] if $\cl{E}: M_{X_{1}}\rightarrow M_{Y_{1}}$ is a quantum channel then $(\Gamma_{2}\ast \Gamma_{1})[\cl{E}] = \Gamma_{2}[\Gamma_{1}[\cl{E}]]$. 
\end{enumerate}
\end{theorem}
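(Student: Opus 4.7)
The plan is to prove (ii) first by a direct expansion, and then to deduce (i) one type at a time, using Lemma \ref{twisted_comp} to compose stochastic operator matrices for the ``quantum'' types and Lemma \ref{cp_tensor_map} for the ns case. Write $\Gamma_1=\sum_i \Phi_i\otimes \Psi_i$ as an element of $\cl L(M_{X_2},M_{X_1})\otimes \cl L(M_{Y_1},M_{Y_2})$ and, similarly, $\Gamma_2=\sum_j \Phi'_j\otimes \Psi'_j$. Substituting these decompositions into the matrix element defining $\Gamma_2\ast\Gamma_1$, separating the sum over the $X_2$-indices from that over the $Y_2$-indices, and recognising each inner sum as the matrix element of a composition, one obtains
$$ \Gamma_2\ast\Gamma_1 \;=\; \sum_{i,j}(\Phi_i\circ \Phi'_j)\otimes (\Psi'_j\circ \Psi_i). $$
Inserting this into (\ref{eq_defsim}) gives $(\Gamma_2\ast\Gamma_1)[\cl E]=\sum_{i,j}\Psi'_j\circ \Psi_i\circ \cl E\circ \Phi_i\circ \Phi'_j=\Gamma_2[\Gamma_1[\cl E]]$, which is (ii). The same identity shows that $\Gamma_2\ast\Gamma_1$ is a well-defined linear map $M_{X_3Y_1}\to M_{X_1Y_3}$, independent of the chosen tensor decompositions.

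For (i), the loc case is immediate: if the $\Phi_i,\Psi_i,\Phi'_j,\Psi'_j$ are taken to be quantum channels in convex decompositions of $\Gamma_1$ and $\Gamma_2$, the displayed formula exhibits $\Gamma_2\ast\Gamma_1$ as a convex combination of tensors of channels. For the q and qc cases, write $\Gamma_k$ in the form (\ref{eq_EFp}) via stochastic operator matrices $\tilde E^{(k)}, \tilde F^{(k)}$ on Hilbert spaces and a unit vector $\xi_k$, $k=1,2$. Apply Lemma \ref{twisted_comp} in its plain form on Alice's side (taking $E=\tilde E^{(2)}$, $F=\tilde E^{(1)}$, so that the resulting $G$ lives on $H_A^{(1)}\otimes H_A^{(2)}$) and in its twisted form on Bob's side (taking $E=\tilde F^{(1)}$, $F=\tilde F^{(2)}$, so that the resulting $G^{\tau}$ lives on $H_B^{(1)}\otimes H_B^{(2)}$); the two asymmetric choices compensate for the fact that Alice's indices flow $X_3\to X_2\to X_1$ while Bob's flow $Y_1\to Y_2\to Y_3$. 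Together with $\xi_1\otimes \xi_2$, rearranged so that the two Alice spaces sit together, these stochastic operator matrices realize $\Gamma_2\ast\Gamma_1$ in the form (\ref{eq_EFp}). In the qc case commutativity of the composed matrices on $H_1\otimes H_2$ follows from the commutativity of $\tilde E^{(k)}$ with $\tilde F^{(k)}$ on each $H_k$. The qa case is then handled by continuity of $\ast$ in each variable, using $\cl Q_{\rm qa}=\overline{\cl Q_{\rm q}}$.

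For ${\rm t}={\rm ns}$, the formula from the first paragraph presents the Choi matrix of $\Gamma_2\ast\Gamma_1$, after a suitable reordering of factors, as the image of the positive tensor $C_{\Gamma_1}\otimes C_{\Gamma_2}$ under the slice map $\id\otimes \phi_{X_2}\otimes \phi_{Y_2}\otimes \id$, where $\phi_{X_2}$ and $\phi_{Y_2}$ are instances of the positive functional of Lemma \ref{cp_tensor_map}; positivity is therefore preserved and complete positivity of $\Gamma_2\ast\Gamma_1$ follows. Trace preservation, together with each of the two no-signalling conditions, is checked by a direct calculation: summing an appropriate output index of one factor collapses the corresponding middle index via the no-signalling (or trace-preservation) property of that factor, after which the remaining sum is handled by the analogous property of the other. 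The main obstacle is the bookkeeping in the q/qc cases, namely the careful pairing of plain versus twisted composition in Lemma \ref{twisted_comp} with the tensor ordering of $\xi_1\otimes \xi_2$; once that is fixed, every type reduces to a routine application of one of Lemma \ref{twisted_comp} or Lemma \ref{cp_tensor_map}.
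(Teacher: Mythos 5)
Your proposal is correct and follows essentially the same route as the paper: for (i) the ${\rm ns}$ case is handled by exhibiting the Choi matrix of $\Gamma_2\ast\Gamma_1$ as a slice of the positive tensor $C_{\Gamma_1}\otimes C_{\Gamma_2}$ using the positive functional of Lemma \ref{cp_tensor_map} on the $X_2$ and $Y_2$ factors, together with a direct verification of the remaining linear conditions; the ${\rm q}$/${\rm qc}$ cases use Lemma \ref{twisted_comp}, and you have correctly identified the asymmetric pairing $E = E^{(1)}\circ E^{(2)}$, $F = F^{(2)}\circ^{\tau}F^{(1)}$ forced by the opposite index flows on Alice's and Bob's sides; ${\rm qa}$ by continuity and ${\rm loc}$ directly, exactly as in the paper. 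The only organizational difference is that you establish the bilinear identity $\Gamma_2\ast\Gamma_1=\sum_{i,j}(\Phi_i\circ\Phi'_j)\otimes(\Psi'_j\circ\Psi_i)$ for arbitrary tensor decompositions up front and use it to dispatch (ii) and the ${\rm loc}$ case in one stroke, whereas the paper proves (ii) at the end via a direct matrix-element computation; this is a mild and clean streamlining rather than a different method.
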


\begin{proof}
(i) 
Let $\Gamma_{1}, \Gamma_{2} \in \cl{Q}_{\rm ns}$. 
If $C_{1} := (\Gamma_{1}(x_{1}x_{1}', y_{2}y_{2}'|x_{2}x_{2}', y_{1}y_{1}'))$ is the Choi matrix of $\Gamma_{1}$ then 
$C_{1}$ satisfies the following conditions:
\begin{enumerate}
	\item[(a)] $C_{1} \in M_{X_{2}Y_{1}X_{1}Y_{2}}^{+}$;
	\item[(b)] there exists $c_{y_{2}, y_{2}'}^{y_{1}, y_{1}'} \in \bb{C}$ such that 
	$\sum_{x_{1}\in X_1}\Gamma_{1}(x_{1}x_{1}, y_{2}y_{2}'|x_{2}x_{2}', y_{1}y_{1}') = \delta_{x_{2}, x_{2}'}c_{y_{2}y_{2}'}^{y_{1}, y_{1}'}, \; y_{1}, y_{1}' \in Y_{1}, y_{2}, y_{2}' \in Y_{2}$;
	\item[(c)] there exists $d_{x_{1}x_{1}'}^{x_{2}, x_{2}'} \in \bb{C}$ such that 
	$\sum_{y_{2}\in Y_2}\Gamma_{1}(x_{1}x_{1}', y_{2}y_{2}|x_{2}x_{2}', y_{1}y_{1}') 
	= \delta_{y_{1}, y_{1}'}d_{x_{1}, x_{1}'}^{x_{2}, x_{2}'}, \; x_{1}, x_{1}' \in X_{1}, x_{2}, x_{2}' \in X_{2}$
\end{enumerate}
(see e.g. \cite[Theorem 6.2]{tl}), and that similar conditions are satisfied for 
the Choi matrix $C_{2} := (\Gamma_{2}(x_{2}x_{2}', y_{3}y_{3}'|x_{3}x_{3}', y_{2}y_{2}'))$ of $\Gamma_{2}$,
with scalars $c_{y_{3}, y_{3}'}^{y_{2}, y_{2}'}$ and $d_{x_{2}x_{2}'}^{x_{3}, x_{3}'}$. 
For $y_{1}, y_{1}' \in Y_{1}, y_{3}, y_{3}' \in Y_{3}, x_{1}, x_{1}' \in X_{1}$ and $x_{3}, x_{3}' \in X_{3}$, set
\begin{gather*}
	c_{y_{3}, y_{3}'}^{y_{1}, y_{1}'} := \sum\limits_{y_{2}, y_{2}' \in Y_2}c_{y_{2}, y_{2}'}^{y_{1}, y_{1}'}c_{y_{3}, y_{3}'}^{y_{2}, y_{2}'} \ \mbox{ and } \ 
	d_{x_{1}, x_{1}'}^{x_{3}, x_{3}'} := \sum\limits_{x_{2}, x_{2}' \in X_2}d_{x_{2}, x_{2}'}^{x_{3}, x_{3}'}d_{x_{1}, x_{1}'}^{x_{2}, x_{2}'}.
\end{gather*}
\noindent Let $C$ be the Choi matrix for $\Gamma_{2}\ast \Gamma_{1}$; thus, 
$$C = 
\left(\sum\limits_{x_{2}, x_{2}' \in X_2}\sum\limits_{y_{2}, y_{2}' \in Y_2}
\Gamma_{2}(x_{2}x_{2}', y_{3}y_{3}'|x_{3}x_{3}', y_{2}y_{2}')
\Gamma_{1}(x_{1}x_{1}', y_{2}y_{2}'|x_{2}x_{2}', y_{1}y_{1}')\right)\hspace{-0.1cm}.$$  
We claim that $C \in M_{X_{3}Y_{1}X_{1}Y_{3}}^{+}$; indeed, if $\tilde{C} := C_{1}\otimes C_{2}$ then 
$\tilde{C}$ is a positive matrix in $M_{X_{2}Y_{1}X_{1}Y_{2}X_{3}Y_{2}X_{2}Y_{3}}$ and, 
after an appropriate shuffling of terms,
\begin{eqnarray*}
\tilde{C} 
& = &
\sum\limits_{\substack{x_{1}, x_{1}' \\ y_{2}, y_{2}'}}\sum\limits_{\substack{x_{2}, x_{2}' \\ y_{1}, y_{1}'}}\sum\limits_{\substack{\tilde{x}_{2}, \tilde{x}_{2}' \\ y_{3}, y_{3}'}}\sum\limits_{\substack{x_{3}, x_{3}' \\ \tilde{y}_{2}, \tilde{y}_{2}'}}\Gamma_{2}(\tilde{x}_{2}\tilde{x}_{2}', y_{3}y_{3}'|x_{3}x_{3}', \tilde{y}_{2}\tilde{y}_{2}')\Gamma_{1}(x_{1}x_{1}', y_{2}y_{2}'|x_{2}x_{2}', y_{1}y_{1}') \\
& &
\;\;\;\;\;\;\;\; \times (\epsilon_{x_{2}x_{2}'}\otimes \epsilon_{\tilde{x}_{2}\tilde{x}_{2}'}\otimes \epsilon_{y_{2}y_{2}'}\otimes \epsilon_{\tilde{y}_{2}\tilde{y}_{2}'}\otimes \epsilon_{x_{3}x_{3}'}\otimes \epsilon_{y_{1}y_{1}'}\otimes \epsilon_{x_{1}x_{1}'}\otimes \epsilon_{y_{3}y_{3}'}).
\end{eqnarray*}
By Lemma \ref{cp_tensor_map}, the map
\begin{gather*}
	\phi \otimes \phi\otimes {\rm id}_{X_{3}}\otimes {\rm id}_{Y_{1}}\otimes {\rm id}_{X_{1}}\otimes {\rm id}_{Y_{3}}: M_{X_{2}X_{2}Y_{2}Y_{2}X_{3}Y_{1}X_{1}Y_{3}}\rightarrow M_{X_{3}Y_{1}X_{1}Y_{3}}
\end{gather*}
is completely positive; in addition,
$$(\phi\otimes \phi\otimes {\rm id}_{M_{X_{3}}}\otimes {\rm id}_{M_{Y_{1}}}\otimes {\rm id}_{M_{X_{1}}}\otimes 
{\rm id}_{M_{Y_{3}}})(\tilde{C}) = C.$$
It follows that
$C \in M_{X_{3}Y_{1}X_{1}Y_{3}}^{+}$ as claimed. 

If $y_{1}, y_{1}' \in Y_{1}$ and $y_{3}, y_{3}' \in Y_{3}$ then 
\begin{eqnarray*}
& &
\hspace{-1cm}
\sum\limits_{x_{1}\in X_1} \hspace{-0.2cm} \left( \hspace{-0.01cm}\sum\limits_{x_{2}, x_{2}'\in X_2}
\sum\limits_{y_{2}, y_{2}' \in Y_2}\Gamma_{2}(x_{2}x_{2}', y_{3}y_{3}'|x_{3}x_{3}', y_{2}y_{2}')\Gamma_{1}(x_{1}x_{1}, y_{2}y_{2}'|x_{2}x_{2}', y_{1}y_{1}')\right) \\
& = &
\sum\limits_{y_{2}, y_{2}'\in Y_2}\sum\limits_{x_{2}, x_{2}' \in X_2}
\delta_{x_{2}, x_{2}'}\Gamma_{2}(x_{2}x_{2}', y_{3}y_{3}'|x_{3}x_{3}', y_{2}y_{2}')c_{y_{2}, y_{2}'}^{y_{1}, y_{1}'}\\
& = &
\sum\limits_{y_{2}, y_{2}' \in Y_2}c_{y_{2}, y_{2}'}^{y_{1}, y_{1}'}
\left(\sum\limits_{x_{2}\in X_2}\Gamma_{2}(x_{2}x_{2}, y_{3}y_{3}'|x_{3}x_{3}', y_{2}y_{2}')\right)\\
& = &
\delta_{x_{3}, x_{3}'}\sum\limits_{y_{2}, y_{2}'\in Y_2}c_{y_{2}, y_{2}'}^{y_{1}, y_{1}'}c_{y_{3}, y_{3}'}^{y_{2}, y_{2}'} 
= 
\delta_{x_{3}, x_{3}'}c_{y_{3}, y_{3}'}^{y_{1}, y_{1}'}.
\end{eqnarray*}
\noindent Similarly, for $x_{1}, x_{1}' \in X_{1}, x_{3}, x_{3}' \in X_{3}$ we have
\begin{eqnarray*}
& &
\hspace{-1cm}
\sum\limits_{y_{3}\in Y_3} \hspace{-0.2cm} 
\left( \hspace{-0.01cm}\sum\limits_{x_{2}, x_{2}' \in X_2}\sum\limits_{y_{2}, y_{2}' \in Y_2}
\Gamma_{2}(x_{2}x_{2}', y_{3}y_{3}|x_{3}x_{3}', y_{2}y_{2}')
\Gamma_{1}(x_{1}x_{1}', y_{2}y_{2}'|x_{2}x_{2}', y_{1}y_{1}')\right)\\
& = &
\sum\limits_{x_{2}, x_{2}' \in X_2}\sum\limits_{y_{2}, y_{2}'\in Y_2}\delta_{y_{2}, y_{2}'}
\Gamma_{1}(x_{1}x_{1}', y_{2}y_{2}'|x_{2}x_{2}', y_{1}y_{1}')d_{x_{2}, x_{2}'}^{x_{3}, x_{3}'}\\
& = &
\sum\limits_{x_{2}, x_{2}' \in X_2}d_{x_{2}, x_{2}'}^{x_{3}, x_{3}'}
\left(\sum\limits_{y_{2}\in Y_2}\Gamma_{1}(x_{1}x_{1}', y_{2}y_{2}|x_{2}x_{2}', y_{1}y_{1}')\right)\\
& = &
\delta_{y_{1}, y_{1}'}\sum\limits_{x_{2}, x_{2}' \in X_2}
d_{x_{2}, x_{2}'}^{x_{3}, x_{3}'}d_{x_{1}, x_{1}'}^{x_{2}, x_{2}'}
=
\delta_{y_{1}, y_{1}'}d_{x_{1}, x_{1}'}^{x_{3}, x_{3}'}.
\end{eqnarray*}
\noindent The former equality implies $L_{\omega_{Y_{1}Y_{3}}}(C) \in \cl{L}_{X_{3}X_{1}}$ for every $\omega_{Y_{1}Y_{3}} \in M_{Y_{1}Y_{3}}$, while the latter equality implies $L_{\omega_{X_{3}X_{1}}}(C) \in \cl{L}_{Y_{1}Y_{3}}$ for every $\omega_{X_{3}X_{1}} \in M_{X_{3}X_{1}}$. Thus, 
\begin{gather*}
	C \in (\cl{L}_{X_{3}X_{1}}\otimes \cl{L}_{Y_{1}Y_{3}}) \cap M_{X_{3}Y_{1}X_{1}Y_{3}}^{+};
\end{gather*}
by the injectivity of the minimal operator system tensor product, 
$C \in (\cl{L}_{X_{3}X_{1}}\otimes_{\min} \cl{L}_{Y_{1}Y_{3}})^{+}$. 
It follows as in \cite[Theorem 6.2]{tl} that $\Gamma_{2}\ast \Gamma_{1} \in \cl{Q}_{\rm ns}$.

Suppose that $\Gamma_{i} \in \cl{Q}_{\rm qc}, i = 1, 2$. Let $(E^{(i)}, F^{(i)})$ 
be a commuting pair of stochastic operator matrices acting on a Hilbert space $\cl{H}_{i}$, 
and $\xi_{i}\in \cl{H}_{i}$ be a unit vector, such that 
$\Gamma_{i}$ is represented through $(E^{(i)}, F^{(i)}, \xi_{i})$ as in (\ref{eq_EFp}), $i = 1,2$, 
that is, if 
$E^{(1)} = (E_{x_{2}, x_{2}', x_{1}, x_{1}'}^{(1)})$, $F^{(1)} = (F_{y_{1}, y_{1}', y_{2}, y_{2}'}^{(1)})$, $E^{(2)} = (E_{x_{3}, x_{3}', x_{2}, x_{2}'}^{(2)})$ and $F^{(2)} = (F_{y_{2}, y_{2}', y_{3}, y_{3}'}^{(2)})$, then 
$$	\Gamma_{1}(x_{1}x_{1}', y_{2}y_{2}'|x_{2}x_{2}', y_{1}y_{1}') = \langle E_{x_{2}, x_{2}', x_{1}, x_{1}'}^{(1)}F_{y_{1}, y_{1}', y_{2}, y_{2}'}^{(1)}\xi_{1}, \xi_{1}\rangle$$
and
$$\Gamma_{2}(x_{2}x_{2}', y_{3}y_{3}'|x_{3}x_{3}', y_{2}y_{2}') = \langle E_{x_{3}, x_{3}', x_{2}, x_{2}'}^{(2)}F_{y_{2}, y_{2}', y_{3}, y_{3}'}^{(2)}\xi_{2}, \xi_{2}\rangle.$$
Set $H = H_{1} \otimes H_{2}$, $\xi = \xi_{1}\otimes \xi_{2}$, $E = E^{(1)}\circ E^{(2)}$, and 
$F = F^{(2)}\circ^{\tau} F^{(1)}$. 
By Lemma \ref{twisted_comp}, both $E$ and $F$ are stochastic operator matrices. 
It is easy to check that $(E, F)$ form a commuting pair. 
Writing $E = (E_{x_{3}, x_{3}', x_{1}, x_{1}'})$ and $F = (F_{y_{1}, y_{1}', y_{3}, y_{3}'})$, we note that
\begin{eqnarray*}
& &
\hspace{-0.35cm}
\left\langle (\Gamma_{2}\ast \Gamma_{1})(\epsilon_{x_{3}, x_{3}'}\otimes \epsilon_{y_{1}, y_{1}'}), 
\epsilon_{x_{1}, x_{1}'}\otimes \epsilon_{y_{3}, y_{3}'}\right\rangle\\
& = & 
\hspace{-0.35cm}\sum\limits_{y_{2}, y_{2}'}\sum\limits_{x_{2}, x_{2}'}\langle E_{x_{2}, x_{2}', x_{1}, x_{1}'}^{(1)}F_{y_{1}, y_{1}', y_{2}, y_{2}'}^{(1)}\xi_{1}, \xi_{1}\rangle\langle E_{x_{3}, x_{3}', x_{2}, x_{2}'}^{(2)}F_{y_{2}, y_{2}', y_{3}, y_{3}'}^{(2)}\xi_{2}, \xi_{2}\rangle \\
& = &
\hspace{-0.35cm}\sum
\hspace{-0.01cm}\langle(E_{x_{2}, x_{2}', x_{1}, x_{1}'}^{(1)}\hspace{-0.1cm}\otimes E_{x_{3}, x_{3}', x_{2}, x_{2}'}^{(2)})
\hspace{-0.05cm}
(F_{y_{1}, y_{1}', y_{2}, y_{2}'}^{(1)}\hspace{-0.1cm}\otimes F_{y_{2}, y_{2}', y_{3}, y_{3}'}^{(2)})
\xi_{1}\otimes \xi_{2}, \xi_{1}\otimes \xi_{2}\rangle\\
& = &
\hspace{-0.35cm}\langle E_{x_{3}, x_{3}', x_{1}, x_{1}'}F_{y_{1}, y_{1}', y_{3}, y_{3}'}\xi, \xi\rangle;
\end{eqnarray*}
thus, $\Gamma_{2} \ast \Gamma_{1} \in \cl{Q}_{\rm qc}$.

If $\Gamma_{i} \in \cl{Q}_{\rm q}, i = 1, 2$, 
replacing operator products with tensor products as needed in the arguments of the previous paragraph 
shows $\Gamma_{2} \ast \Gamma_{1} \in \cl{Q}_{\rm q}$. 
The continuity of the map $(\Gamma_{1}, \Gamma_{2}) \mapsto \Gamma_{2}\ast \Gamma_{1}$ 
ensures that if 
$\Gamma_{i} \in \cl{Q}_{\rm qa}, i = 1, 2$, then $\Gamma_{2}\ast \Gamma_{1} \in \cl{Q}_{\rm qa}$.

Finally, assume that $\Gamma_{i} \in \cl{Q}_{\rm loc}, i = 1, 2$; 
without loss of generality, suppose that 
$\Gamma_{i} = \Phi_{i}\otimes \Psi_{i}$, 
 for some quantum channels
 $\Phi_{1}: M_{X_{2}}\rightarrow M_{X_{1}}$, $\Phi_{2}: M_{X_{3}}\rightarrow M_{X_{2}}$, 
 $\Psi_{1}: M_{Y_{1}}\rightarrow M_{Y_{2}}$ and $\Psi_{2}: M_{Y_{2}}\rightarrow M_{Y_{3}}$;
we then have that
$$	\Gamma_{2}\ast \Gamma_{1} = \left(\Phi_{1}\circ \Phi_{2}\right)
	\otimes \left(\Psi_{2}\circ\Psi_{1}\right).$$

(ii) 
For $x_{3}, x_{3}' \in X_{3}$, we have
\begin{eqnarray*}
& &
\Gamma_{2}[\Gamma_{1}[\cl{E}]](\epsilon_{x_{3}, x_{3}'})\\
& = &
\hspace{-0.3cm}\sum
\hspace{-0.05cm}\Gamma_{2}(x_{2}x_{2}', y_{3}y_{3}'|x_{3}x_{3}', y_{2}y_{2}')\Gamma_{1}(x_{1}x_{1}',y_{2}y_{2}'|x_{2}x_{2}', y_{1}y_{1}')
\cl{E}(y_{1}y_{1}'|x_{1}x_{1}')\epsilon_{y_{3}, y_{3}'},
\end{eqnarray*}
where the summation is over all 
$y_{1}, y_{1}',y_{2}, y_{2}',y_{3}, y_{3}',x_{1}, x_{1}',x_{2}, x_{2}'$;
by the proof of (i), 
$\Gamma_{2}[\Gamma_{1}[\cl{E}]] = (\Gamma_{2}\ast \Gamma_{1})[\cl{E}]$. 
\end{proof}


\section{The homomorphism types}\label{s_quantize}

For $\cl{U} \subseteq \overline{H}\otimes K$, we set $\tilde{\cl{U}} = \theta(\cl{U})$ and, conversely, for a subspace $\tilde{\cl{U}} \subseteq \cl{L}(H, K)$ we write $\cl{U} = \theta^{-1}(\tilde{\cl{U}})$. 

\begin{definition}\label{q_hypergraph}
Let $X, Y$ be finite sets. 
A subspace $\cl{U} \subseteq \overline{\bb{C}}^{X}\otimes \bb{C}^{Y}$ will be called
a \emph{quantum hypergraph over $(X,Y)$}.
\end{definition}

\begin{remark}\label{r_clavquant}
\rm 
Let $X$ be a finite set. Recall that a (classical) \emph{hypergraph} with vertex set $X$ is a set of subsets of $X$,
called \emph{hyperedges}. 
Given a(nother) set $Y$ we can view a subset $E\subseteq X\times Y$ as a hypergraph with vertex set $X$ 
and hyperedges $E_y = \{x\in X : (x,y)\in E\}$, $y\in Y$; by abuse of notation, we 
call the subset $E$ a hypergraph. We further call a hypergraph $E\subseteq X\times Y$ \emph{full}
if for every $x\in X$ there exists $y\in Y$ such that $(x,y)\in E$. 
Given a hypergraph $E\subseteq X\times Y$, we let 
$$\cl U_E = {\rm span}\{\overline{e_x} \otimes e_y : (x,y)\in E\}$$
and view $\cl U_E$ as a quantum hypergraph over $(X,Y)$. We call quantum hypergraphs of this form \textit{classical}. 
\end{remark}

It is easy to see that a hypergraph $\cl{U} \subseteq \overline{\bb{C}}^{X}\otimes \bb{C}^{Y}$ is classical 
if and only if the subspace $\tilde{\cl U} \subseteq \cl{L}(\bb{C}^{X}, \bb{C}^{Y})$ 
is a $\cl{D}_{Y},\cl{D}_{X}$-bimodule. 
Indeed, 
given a $\cl{D}_{Y},\cl{D}_{X}$ bimodule $\tilde{\cl{U}}$, let 
\begin{gather*}
	E_{\tilde{\cl{U}}} = \{(x, y): \; \exists \; A\in \tilde{\cl U}\; \text{such that} \; \langle Ae_y,e_x\rangle \neq 0\};
\end{gather*}
it is then easily verified that $\tilde{\cl{U}} = \tilde{\cl{U}}_{E_{\tilde{\cl{U}}}}$.

Let $X, Y$ be finite sets. A linear map 
$\Phi : M_{X}\rightarrow M_{Y}$ is called a \emph{quantum channel} if 
$\Phi$ is completely positive and trace-preserving.
Every quantum channel $\Phi$ has a \emph{Kraus representation} 
\cite[Proposition 2.20]{watrous}, that is,
\begin{equation}\label{eq_Kraus}
	\Phi(T) = \sum\limits_{k=1}^m A_{k}TA_{k}^{*}, \ \ \ T\in M_X,
\end{equation}
for some $m\in \bb{N}$ and operators $A_i\in \cl B(\bb{C}^X,\bb{C}^Y)$, $i \in [m]$, 
such that $\sum_{i=1}^m A_i^* A_i$ $=$ $I_X$. 
The \emph{Kraus space} $\tilde{\cl{K}}_{\Phi}$ of $\Phi$ is defined by letting
\begin{gather*}
	\tilde{\cl{K}}_{\Phi} := {\rm span}\{A_{k}: \; k\in [m]\};
\end{gather*}
we note that, by the uniqueness clause of Stinespring's Theorem \cite[Corollary 2.23]{watrous}, 
$\tilde{\cl{K}}_{\Phi}$ is 
independent of the particular Kraus representation (\ref{eq_Kraus}) of $\Phi$.

\begin{remark}\label{classic_supp}
\rm 
Let $\cl N : \cl{D}_{X}\rightarrow \cl{D}_{Y}$ be a classical channel. Then 
\begin{gather*}
	\Gamma_{\cl N} (T) = \sum_{x\in X} \sum_{y\in Y} \cl N(y|x)\epsilon_{y,x}T\epsilon_{x,y}, 
	\ \ \ T\in M_X, 
\end{gather*}
and it follows that 
$$\tilde{\cl{K}}_{\Gamma_{\cl N}} \hspace{-0.05cm} = \hspace{-0.05cm} 
{\rm span}\{\hspace{-0.05cm}\sqrt{\cl N(y|x)}\epsilon_{y, x} : 
x \hspace{-0.02cm}\in\hspace{-0.02cm} X, y \hspace{-0.02cm}\in \hspace{-0.02cm}Y\} 
= {\rm span}\{\epsilon_{y,x}: (x, y)\hspace{-0.02cm} \in\hspace{-0.02cm} {\rm supp}(\cl N)\}\hspace{-0.02cm}.$$
\end{remark}

Here, and in the sequel, we fix finite sets $X_{i}, Y_{i}$, $i = 1, 2$. Let $\cl{U}_{1} \subseteq \bb{C}^{X_{1}}\otimes \overline{\bb{C}}^{Y_{1}}$ and $\cl{U}_{2} \subseteq \overline{\bb{C}}^{X_{2}}\otimes \bb{C}^{Y_{2}}$ be quantum hypergraphs. Set 
$$	\cl{U}_{1}\hspace{-0.1cm}\Rightarrow\hspace{-0.05cm} \cl{U}_{2} := 
(\cl{U}_{1}\otimes \cl{U}_{2})
+ \left(\cl{U}_{1}^{\perp}\otimes (\overline{\bb{C}}^{X_{2}}\otimes \bb{C}^{Y_{2}})\right)$$
and
$$	\cl{U}_{1}\hspace{-0.1cm}\Leftrightarrow \hspace{-0.05cm}\cl{U}_{2} 
	:= (\cl{U}_{1}\otimes \cl{U}_{2})+(\cl{U}_{1}^{\perp}\otimes \cl{U}_{2}^{\perp}),$$
considered as quantum hypergraphs in 
$(\bb{C}^{X_{1}}\otimes \overline{\bb{C}}^{Y_{1}})\otimes (\overline{\bb{C}}^{X_{2}}\otimes \bb{C}^{Y_{2}})$.

Let 
$$\sigma: \bb{C}^{X_{1}}\otimes\overline{\bb{C}}^{Y_{1}}\otimes \overline{\bb{C}}^{X_{2}}\otimes \bb{C}^{Y_{1}}\rightarrow \overline{\bb{C}}^{X_{2}}\otimes \overline{\bb{C}}^{Y_{1}}\otimes \bb{C}^{X_{1}}\otimes \bb{C}^{Y_{2}}$$ 
be the flip operator defined on the elementary tensors by
$	\sigma(\xi_1\otimes \overline{\eta}_1\otimes\overline{\xi}_2\otimes \eta_2) 
	= \overline{\xi}_2\otimes \overline{\eta}_1\otimes\xi_1\otimes \eta_2.$
For quantum hypergraphs $\cl{U}_{1} \subseteq \bb{C}^{X_{1}}\otimes \overline{\bb{C}}^{Y_{1}}$ and $\cl{U}_{2} \subseteq \overline{\bb{C}}^{X_{2}}\otimes \bb{C}^{Y_{2}}$, we set 
$$\cl{U}_{1}\hspace{-0.1cm}\rightarrow\hspace{-0.05cm} \cl{U}_{2} := 
\sigma(\cl{U}_{1}\hspace{-0.1cm}\Rightarrow \hspace{-0.05cm}\cl{U}_{2}) 
\ \mbox{ and } \ 
\cl{U}_{1}\hspace{-0.1cm}\leftrightarrow\hspace{-0.05cm} \cl{U}_{2} := 
\sigma(\cl{U}_{1}\hspace{-0.1cm}\Leftrightarrow \hspace{-0.05cm}\cl{U}_{2}).$$

Given a quantum hypergraph $\cl{K} \subseteq \overline{\bb{C}}^{X}\otimes \bb{C}^{Y}$, we let
$$	\cl{Q}(\cl{K}) := \left\{\Gamma: M_{X}\rightarrow M_{Y} \; : \; \text{a quantum channel with } 
\tilde{\cl{K}}_{\Gamma} \subseteq \tilde{\cl{K}}\right\};$$
if $\Gamma\in \cl{Q}(\cl{K})$, we say that $\Gamma$ \textit{fits} $\cl{K}$.

\begin{definition}\label{d_quasihom}
Let ${\rm t} \in \{{\rm loc, q, qa, qc, ns}\}$, and 
$\cl{U}_{1} \subseteq \bb{C}^{X_{1}}\otimes \overline{\bb{C}}^{Y_{1}}$
and 
$\cl{U}_{2} \subseteq \overline{\bb{C}}^{X_{2}}\otimes \bb{C}^{Y_{2}}$ be quantum hypergraphs. 
We say that $\cl{U}_{1}$ is ${\rm t}$-quasi-homomorphic (resp. ${\rm t}$-homomorphic) to $\cl{U}_{2}$,
and write $\cl{U}_{1} \leadsto_{\rm t} \cl{U}_{2}$ (resp. $\cl{U}_{1} \to_{\rm t} \cl{U}_{2}$), 
if there exists a quantum channel 
$\Gamma: M_{X_{2}Y_{1}}\rightarrow M_{X_{1}Y_{2}}$ with $\Gamma \in \cl{Q}_{\rm t}$, 
such that $\Gamma \in \cl{Q}(\cl{U}_{1}\hspace{-0.1cm}\rightarrow \hspace{-0.05cm}\cl{U}_{2})$
(resp. $\Gamma \in \cl{Q}(\cl{U}_{1}\hspace{-0.1cm}\leftrightarrow \hspace{-0.05cm}\cl{U}_{2})$). 
\end{definition}

It is clear that the relation 
$\cl{U}_{1} \rightarrow_{\rm t} \cl{U}_{2}$ implies the relation $\cl{U}_{1} \leadsto_{\rm t} \cl{U}_{2}$
for any choice of ${\rm t} \in \{\rm loc, q, qa, qc, ns\}$. 
In Proposition \ref{p_twisted-Choi} below, we characterise the constraint condition on the channel $\Gamma$
appearing in Definition \ref{d_quasihom}, in terms of a suitable modification of the Choi matrix of $\Gamma$.
More specifically, let  
$\Gamma : M_{X}\rightarrow M_{Y}$ be a linear map. 
Set
$\tilde{C}_{\Gamma} := \sum_{x, x' \in X}\overline{\epsilon}_{x',x}\otimes \Gamma(\epsilon_{x,x'})$.

\begin{lemma}\label{l_tChoi}
Let $X$ and $Y$ be finite sets, $A\in \cl L(\bb{C}^X,\bb{C}^Y)$ and $\Gamma : M_X\to M_Y$ be the map, given by 
$\Gamma(T) = ATA^*$. 
If $\zeta_A = \theta^{-1}(A)$ then $\tilde{C}_{\Gamma} = \zeta_A\zeta_A^*$. 
\end{lemma}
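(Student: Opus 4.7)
My plan is to prove the identity by reducing to the rank-one case by sesquilinearity and then performing a direct computation in coordinates.

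The key observation is that both sides depend on $A$ in a sesquilinear (linear in $A$, conjugate-linear in $A^{*}$) way. Indeed, $\Gamma(T) = ATA^{*}$ is manifestly sesquilinear in $A$, hence so is $\tilde C_{\Gamma}$; and $\zeta_A = \theta^{-1}(A)$ depends linearly on $A$, so $\zeta_A\zeta_A^{*}$ is sesquilinear in $A$ as well. It therefore suffices to verify the identity when $A = \eta\xi^{*}$ is rank one, with $\xi \in \bb{C}^{X}$, $\eta \in \bb{C}^{Y}$, and then sum over a Kraus-type expansion $A = \sum_k \eta_k \xi_k^{*}$ to conclude in general.

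For $A = \eta\xi^{*}$, the defining formula $\theta(\overline{\xi}\otimes \eta) = \eta\xi^{*}$ gives immediately $\zeta_A = \overline{\xi}\otimes \eta$, so that
$$\zeta_A \zeta_A^{*} = (\overline{\xi}\otimes \eta)(\overline{\xi}\otimes \eta)^{*} = \overline{\xi}\,\overline{\xi}^{*}\otimes \eta\eta^{*}.$$
On the other side, writing $\xi = \sum_x \xi_x e_x$ we compute
$$\Gamma(\epsilon_{x,x'}) = \eta\xi^{*}\epsilon_{x,x'}\xi\eta^{*} = \langle \epsilon_{x,x'}\xi,\xi\rangle\,\eta\eta^{*} = \xi_{x'}\overline{\xi_x}\,\eta\eta^{*},$$
which yields
$$\tilde C_{\Gamma} = \Bigl(\sum_{x,x'\in X}\xi_{x'}\overline{\xi_x}\,\overline{\epsilon}_{x',x}\Bigr)\otimes \eta\eta^{*}.$$

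The remaining step is to recognise the bracketed sum as $\overline{\xi}\,\overline{\xi}^{*}$. Using the identity $\overline{A}(\overline{\zeta}) = \overline{A^{*}\zeta}$ from the preliminaries, applied to $A = \epsilon_{x',x}$, one sees that $\overline{\epsilon}_{x',x}$ coincides with the rank-one operator $\overline{e_x}\,\overline{e_{x'}}^{*}$ on $\overline{\bb{C}}^{X}$. Since $\overline{\xi} = \sum_{x}\overline{\xi_x}\,\overline{e_x}$, one obtains
$$\overline{\xi}\,\overline{\xi}^{*} = \sum_{x,x'\in X}\overline{\xi_x}\,\xi_{x'}\,\overline{e_x}\,\overline{e_{x'}}^{*} = \sum_{x,x'\in X}\xi_{x'}\overline{\xi_x}\,\overline{\epsilon}_{x',x},$$
matching the bracketed expression above. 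Combining the two displays yields $\tilde C_{\Gamma} = \zeta_A\zeta_A^{*}$ in the rank-one case, and the general case follows by sesquilinearity.

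The only real bookkeeping issue, which I expect to be the sole (minor) obstacle, is getting the swap of indices $(x,x') \leftrightarrow (x',x)$ correct in the identification $\overline{\epsilon}_{x',x} = \overline{e_x}\,\overline{e_{x'}}^{*}$; once the conjugation conventions of the preliminaries are in place, the rest is a direct matching of coefficients.
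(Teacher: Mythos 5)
Your rank-one computation is correct and in substance is the same coordinate calculation the paper performs, including the identification $\overline{\epsilon}_{x',x} = \overline{e_x}\,\overline{e_{x'}}^{*}$. The problem lies in the reduction step. Both sides of the identity are \emph{quadratic} in $A$ (the diagonal of a bilinear form), not additive, so verifying $\tilde C_{\Gamma_A} = \zeta_A\zeta_A^{*}$ for each rank-one piece $A_k$ of a decomposition $A = \sum_k A_k$ does \emph{not} let you ``sum over a Kraus-type expansion to conclude in general.'' Expanding $\Gamma_A(T) = \sum_{k,l}A_k T A_l^{*}$ and $\zeta_A\zeta_A^{*} = \sum_{k,l}\zeta_{A_k}\zeta_{A_l}^{*}$, the cross terms with $k\neq l$ appear on both sides, and your argument says nothing about them. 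The ``sesquilinear in $A$'' intuition you invoke is the right one, but to exploit it you must establish the \emph{polarized} identity $\tilde C_{\Lambda} = \zeta_{A_1}\zeta_{A_2}^{*}$ for $\Lambda(T) = A_1 T A_2^{*}$ with two \emph{independent} rank-one operators $A_1, A_2$, and then conclude by bilinearity (equivalently, take $A_1 = A_2$ at the end). This is exactly what the paper does: it computes $\tilde C_{\Lambda}$ with $A_1 = \eta_1\xi_1^{*}$ and $A_2 = \eta_2\xi_2^{*}$ unrestricted and only afterwards sets $A = \sum A_i$. Your computation extends verbatim to that setting --- you just carry two pairs $(\xi_i,\eta_i)$ through the same steps --- so the repair is pure bookkeeping, but as written the argument is incomplete.
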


\begin{proof}
We note that $\overline{e}_x$, $x\in X$, are the elements of the canonical basis of $\overline{\bb{C}}^X$. 
Note also that, if $\xi = \sum_{x\in X} \lambda_x e_x\in \bb{C}^X$ then 
$\overline{\xi} = \sum_{x\in X} \overline{\lambda}_x \overline{e}_x$. 
Further, $\overline{\epsilon}_{x,x'} = \overline{e}_{x'}\overline{e}_x^*$;
indeed, for $x,x',a,a'\in X$, we have
$$\overline{\epsilon}_{x,x'}(\overline{e}_a) (e_{a'})  = 
\overline{e}_a(\epsilon_{x,x'}(e_{a'})) = \delta_{x',a'} \overline{e}_a(e_x) = 
\delta_{x',a'}\delta_{x,a}
=
(\overline{e}_{x'}\overline{e}_x^*)(\overline{e}_a)(e_{a'}),$$
and the claim follows by linearity. 

Let $\xi_i\in \bb{C}^X$, $\eta_i\in \bb{C}^Y$, $A_i = \eta_i\xi_i^*$, $i = 1,2$, and 
$\Lambda : M_X\to M_Y$ be the linear map, given by 
$\Lambda(T) = A_1 T A_2^*$. 
Write $\xi_i = \sum_{x\in X} \lambda_x^{(i)} e_x$, where $\lambda_x^{(i)}\in \bb{C}$,
$x\in X$, $i = 1,2$. 
Set $\zeta_i = \theta^{-1}(A_i)$; thus, $\zeta_i = \overline{\xi}_i\otimes\eta_i$, $i = 1,2$, and hence 
$$\zeta_1\zeta_2^* 
= 
\overline{\xi}_1\overline{\xi}_2^*\otimes\eta_1\eta_2^*
= \hspace{-0.2cm} \sum_{x,x'\in X} \overline{\lambda}_x^{(1)}  \lambda_{x'}^{(2)} \overline{e}_x  \overline{e}_{x'}^{*} \otimes\eta_1\eta_2^*\\
= \hspace{-0.2cm} 
\sum_{x,x'\in X} \overline{\lambda}_x^{(1)}  \lambda_{x'}^{(2)} \overline{\epsilon}_{x',x} \otimes\eta_1\eta_2^*.$$
Thus, 
\begin{eqnarray*}
\tilde{C}_{\Lambda} 
& = & 
\sum_{x, x' \in X}\overline{\epsilon}_{x',x}\otimes A_1\epsilon_{x,x'} A_2^*
=
\sum_{x, x' \in X}\overline{\epsilon}_{x',x}\otimes (A_1e_x) (A_2e_{x'})^*\\
& = & 
\sum_{x, x' \in X}
\overline{\lambda}_x^{(1)} \lambda_{x'}^{(2)} \overline{\epsilon}_{x',x}\otimes \eta_1\eta_2^*
= \zeta_1\zeta_2^*.
\end{eqnarray*}

Write $A = \sum_{i=1}^m A_i$, where $A_i$ has rank one, $i = 1,\dots,m$. 
Then $\Gamma(T) = \sum_{i,j=1}^m A_i T A_j^*$. By additivity,
$$\tilde{C}_{\Gamma} = \sum_{i,j=1}^m \zeta_{A_i}\zeta_{A_j}^* = 
\left(\sum_{i=1}^m \zeta_{A_i}\right) \left(\sum_{j=1}^m\zeta_{A_j}\right)^* 
= \zeta_A\zeta_A^*.$$
\end{proof}

\begin{proposition}\label{p_twisted-Choi}
Let $\cl{U} \subseteq \overline{\bb{C}}^{X_{2}Y_{1}}\hspace{-0.05cm} \otimes \bb{C}^{X_{1}Y_{2}}$ be a quantum hypergraph, and 
$\Gamma: M_{X_{2}Y_{1}}\rightarrow M_{X_{1}Y_{2}}$ be a quantum channel. 
Then ${\rm ran}(\tilde{C}_{\Gamma}) \subseteq \cl{U}$ if and only if $\tilde{\cl{K}}_{\Gamma} \subseteq \tilde{\cl{U}}$. 
\end{proposition}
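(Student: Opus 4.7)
The plan is to reduce the statement to the identification of $\tilde{C}_\Gamma$ as a positive operator whose range is the image of the Kraus space under $\theta^{-1}$, and then invoke the bijectivity of $\theta$.

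First, I would fix a Kraus representation $\Gamma(T) = \sum_{k=1}^m A_k T A_k^*$ of $\Gamma$, so that $\tilde{\cl{K}}_\Gamma = \mathrm{span}\{A_k : k \in [m]\}$. The map $\Phi \mapsto \tilde{C}_\Phi$ is clearly linear in $\Phi$, so
\[
\tilde{C}_\Gamma = \sum_{k=1}^m \tilde{C}_{\Gamma_k}, \qquad \Gamma_k(T) := A_k T A_k^*.
\]
Applying Lemma \ref{l_tChoi} to each rank-one Kraus term gives $\tilde{C}_{\Gamma_k} = \zeta_k \zeta_k^*$, where $\zeta_k := \theta^{-1}(A_k) \in \overline{\bb{C}}^{X_2 Y_1} \otimes \bb{C}^{X_1 Y_2}$. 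Consequently
\[
\tilde{C}_\Gamma = \sum_{k=1}^m \zeta_k \zeta_k^*.
\]

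Next I would compute the range. Since $\tilde{C}_\Gamma$ is a finite sum of positive rank-one operators, its kernel is
\[
\ker(\tilde{C}_\Gamma) = \bigcap_{k=1}^m \ker(\zeta_k \zeta_k^*) = \bigcap_{k=1}^m (\zeta_k)^\perp = \left(\mathrm{span}\{\zeta_1,\dots,\zeta_m\}\right)^\perp,
\]
so $\mathrm{ran}(\tilde{C}_\Gamma) = \mathrm{span}\{\zeta_k : k \in [m]\} = \theta^{-1}(\tilde{\cl{K}}_\Gamma)$.

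Finally, since $\theta : \overline{\bb{C}}^{X_2 Y_1} \otimes \bb{C}^{X_1 Y_2} \to \cl{L}(\bb{C}^{X_2 Y_1}, \bb{C}^{X_1 Y_2})$ is a linear bijection intertwining $\cl{U}$ with $\tilde{\cl{U}}$, we obtain
\[
\mathrm{ran}(\tilde{C}_\Gamma) \subseteq \cl{U} \iff \theta^{-1}(\tilde{\cl{K}}_\Gamma) \subseteq \cl{U} = \theta^{-1}(\tilde{\cl{U}}) \iff \tilde{\cl{K}}_\Gamma \subseteq \tilde{\cl{U}},
\]
which proves the equivalence. The main (but minor) obstacle is recognising that the rank-one computation in Lemma \ref{l_tChoi} combines additively with the kernel calculation for sums of positive rank-one operators; once this is done, the argument is essentially bookkeeping. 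Note also that the result is independent of the particular Kraus representation chosen, as it must be given the uniqueness of $\tilde{\cl{K}}_\Gamma$ recorded earlier via Stinespring.
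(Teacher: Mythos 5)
Your proposal is correct and follows essentially the same route as the paper's proof: both apply Lemma~\ref{l_tChoi} together with additivity of $\Phi\mapsto\tilde{C}_\Phi$ to obtain $\tilde{C}_\Gamma=\sum_k\zeta_k\zeta_k^*$ with $\zeta_k=\theta^{-1}(A_k)$, and then use that the range of this positive operator is exactly $\mathrm{span}\{\zeta_k\}=\theta^{-1}(\tilde{\cl K}_\Gamma)$. One small wording slip: the Kraus operators $A_k$ need not be rank one, so the phrase ``each rank-one Kraus term'' should just read ``each Kraus term'' --- Lemma~\ref{l_tChoi} applies to arbitrary $A$, so the computation is unaffected.
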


\begin{proof}
Assume that  
$\Gamma(T) = \sum\limits_{k=1}^m A_{k}TA_{k}^{*}$ in a Kraus representation, 
where $A_{k} \in \cl{L}(\bb{C}^{X_{2}Y_{1}}, \bb{C}^{X_{1}Y_{2}})$, $k = 1,\dots,m$. 
Let $\zeta_k = \theta^{-1}(A_{k})$, $k = 1,\dots,m$. 
By Lemma \ref{l_tChoi} and additivity, 
$\tilde{C}_{\Gamma} = \sum\limits_{k=1}^m \zeta_k\zeta_k^*$; in particular, $\tilde{C}_{\Gamma}$ is positive. 
It now follows that 
${\rm ran}\; \tilde{C}_{\Gamma} \subseteq \cl{U}$ if and only if 
$\zeta_k  \in \cl{U}$ for each $k$; on the other hand, the latter condition is 
equivalent to $\tilde{\cl{K}}_{\Gamma} \subseteq \tilde{\cl{U}}$.
\end{proof}

We next show that the notions, introduced in Definition \ref{d_quasihom},
are quantum versions of the different types quasi-homomorphisms and homomorphisms
of classical hypergraphs, introduced in \cite{ght}. 
If $E_i\subseteq X_i\times Y_i$, $i = 1,2$, are (classical) hypergraphs, following \cite{ght}, 
we let 
$$E_1\hspace{-0.1cm} \rightarrow\hspace{-0.05cm} E_2 = \{(x_2,y_1,x_1,y_2) \in X_2Y_1\times X_1Y_2 
: (x_1,y_1)\in E_1\Rightarrow (x_2,y_2)\in E_2\}$$
and 
$$E_1\hspace{-0.1cm} \leftrightarrow\hspace{-0.05cm} E_2 = \{(x_2,y_1,x_1,y_2) \in X_2Y_1\times X_1Y_2 
: (x_1,y_1)\in E_1\Leftrightarrow (x_2,y_2)\in E_2\}.$$
If ${\rm t}\in \{{\rm loc, q, qa, qc, ns}\}$, 
we write \cite{ght} 
$E_1\leadsto_{\rm t} E_2$ (resp. $E_1\rightarrow_{\rm t} E_2$) if there exists
$\cl N\in \cl C_{\rm t}$ such that $\supp(\cl N)\subseteq E_1\hspace{-0.1cm} \rightarrow\hspace{-0.05cm} E_2$
(resp. $\supp(\cl N)\subseteq E_1\hspace{-0.1cm} \leftrightarrow\hspace{-0.05cm} E_2$).

\begin{proposition}\label{classical_to_quantum}
Let ${\rm t} \in \{\rm loc, q, qa, qc, ns\}$ and 
$E_{i} \subseteq X_{i}\times Y_{i}$ be a hypergraph, $i = 1, 2$. Then
\begin{itemize}
	\item[(i)] $E_{1} \leadsto_{\rm t} E_{2}$ if and only if $\overline{\cl{U}}_{E_{1}} \leadsto_{\rm t} 
	\cl{U}_{E_{2}}$, and
	\item[(ii)] $E_{1} \rightarrow_{\rm t} E_{2}$ if and only if 
	$\overline{\cl{U}}_{E_{1}}\rightarrow_{\rm t} \cl{U}_{E_{2}}$. 
\end{itemize}
\end{proposition}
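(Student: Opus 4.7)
The plan is to reduce the statement to two ingredients: (a) identifying the quantum hypergraphs $\overline{\cl U}_{E_1}\rightarrow \cl U_{E_2}$ and $\overline{\cl U}_{E_1}\leftrightarrow \cl U_{E_2}$ with the classical hypergraphs $\cl U_{E_1\rightarrow E_2}$ and $\cl U_{E_1\leftrightarrow E_2}$ of Remark~\ref{r_clavquant}; and (b) showing that under this identification, the existence of a channel in $\cl Q_{\rm t}$ fitting the quantum target corresponds exactly to the existence of a classical NS correlation in $\cl C_{\rm t}$ supported on the classical target. I would handle (i) and (ii) in parallel, since the arguments are identical with $\rightarrow$ replaced by $\leftrightarrow$.

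For (a), I compute $\overline{\cl U}_{E_1}\Rightarrow \cl U_{E_2}$ directly on basis vectors: since $\overline{\cl U}_{E_1}^{\perp}={\rm span}\{e_{x_1}\otimes\overline{e}_{y_1}:(x_1,y_1)\notin E_1\}$, this space is spanned by the vectors $e_{x_1}\otimes\overline{e}_{y_1}\otimes\overline{e}_{x_2}\otimes e_{y_2}$ indexed precisely by the quadruples satisfying the implication $(x_1,y_1)\in E_1\Rightarrow (x_2,y_2)\in E_2$. Applying the flip $\sigma$ rearranges tensor factors and yields $\overline{\cl U}_{E_1}\rightarrow \cl U_{E_2}=\cl U_{E_1\rightarrow E_2}$, and similarly $\overline{\cl U}_{E_1}\leftrightarrow \cl U_{E_2}=\cl U_{E_1\leftrightarrow E_2}$. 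For the forward direction of (i), given $\cl N\in \cl C_{\rm t}$ with $\supp(\cl N)\subseteq E_1\rightarrow E_2$, I take $\Gamma=\Gamma_{\cl N}\in \cl Q_{\rm t}$; by Remark~\ref{classic_supp},
$\tilde{\cl K}_{\Gamma_{\cl N}}={\rm span}\{\epsilon_{(x_1,y_2),(x_2,y_1)}:(x_2,y_1,x_1,y_2)\in \supp(\cl N)\}\subseteq \tilde{\cl U}_{E_1\rightarrow E_2}$,
so $\Gamma_{\cl N}$ witnesses $\overline{\cl U}_{E_1}\leadsto_{\rm t} \cl U_{E_2}$.

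For the reverse direction, given $\Gamma\in \cl Q_{\rm t}$ fitting $\overline{\cl U}_{E_1}\rightarrow \cl U_{E_2}$, I set $\cl N:=\cl N_{\Gamma}$. Fixing a Kraus representation of $\Gamma$ whose operators lie in ${\rm span}\{\epsilon_{(x_1,y_2),(x_2,y_1)}:(x_2,y_1,x_1,y_2)\in E_1\rightarrow E_2\}$, a direct computation of $\cl N((x_1,y_2)|(x_2,y_1))=\langle \Gamma(\epsilon_{(x_2,y_1),(x_2,y_1)}),\epsilon_{(x_1,y_2),(x_1,y_2)}\rangle$ as a sum of squared moduli of the relevant Kraus coefficients immediately gives $\supp(\cl N)\subseteq E_1\rightarrow E_2$. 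The main obstacle is verifying that $\cl N\in \cl C_{\rm t}$, equivalently that $\Gamma_{\cl N_{\Gamma}}=\Delta_{X_1Y_2}\circ\Gamma\circ\Delta_{X_2Y_1}$ still lies in $\cl Q_{\rm t}$. For ${\rm t}={\rm loc}$ this is immediate from the tensor decomposition of $\Gamma$; for ${\rm t}={\rm q}$ and $\rm qc$, one introduces dephased stochastic operator matrices $\hat E_{x,x',a,a'}:=\delta_{x,x'}\delta_{a,a'}E_{x,x,a,a}$ (and analogously for $F$), which inherit positivity, the marginal $\sum_{a}\hat E_{x,x,a,a}=I$, and the commutation or tensor-product structure of the original pair, so that $(\hat E,\hat F,\xi)$ realises $\Gamma_{\cl N_{\Gamma}}$; continuity then handles $\rm qa$, and $\rm ns$ follows because the diagonal expectations are themselves classical (hence no-signalling) channels.
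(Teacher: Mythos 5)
Your proposal is correct and follows essentially the same route as the paper's: both rest on the identification $\overline{\cl U}_{E_1}\rightarrow\cl U_{E_2}=\cl U_{E_1\rightarrow E_2}$ (equation~(\ref{eq_trE1E2}) in the paper) and on passing between $\Gamma_{\cl N}$ and $\cl N_\Gamma$ via Remark~\ref{classic_supp}, the support computation on Kraus operators, and Proposition~\ref{p_twisted-Choi}. The one difference is that where the paper simply cites \cite[Remark 8.1]{tl} for the fact that $\Gamma\in\cl Q_{\rm t}$ implies $\cl N_\Gamma\in\cl C_{\rm t}$, you re-derive it with an explicit dephasing of the stochastic operator matrices, which is correct and self-contained but not a change of method.
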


\begin{proof}
(i) First, assume $\overline{\cl{U}}_{E_{1}}\leadsto_{\rm t} \cl{U}_{E_{2}}$ 
via $\Gamma: M_{X_{2}Y_{1}}\rightarrow M_{X_{1}Y_{2}}$. 
Write $\tilde{\Gamma} = \Gamma_{\cl N_{\Gamma}}$; thus, 
$\tilde{\Gamma}: M_{X_{2}Y_{1}}\rightarrow M_{X_{1}Y_{2}}$ is a quantum channel. 
By \cite[Remark 8.1]{tl}, as $\Gamma \in \cl{Q}_{\rm t}$, 
we have that $\cl N_{\Gamma} \in \cl{C}_{\rm t}$ and $\tilde{\Gamma} \in \cl{Q}_{\rm t}$. 
Note that, if 
$\Gamma(T) = \sum\limits_{k=1}^m A_{k}TA_{k}^{*}$, $T\in M_{X_2Y_1}$ 
is a Kraus representation of $\Gamma$ then the set
$$\cl K :=
\{(\epsilon_{x_{1},x_{1}}\otimes \epsilon_{y_{2},y_{2}})A_{k}(\epsilon_{x_{2},x_{2}}\otimes \epsilon_{y_{1},y_{1}}) 
: x_i\in X_i, y_i\in Y_i, i = 1,2, k\in [m]\}$$
of operators in $\cl L(\bb{C}^{X_2Y_1},\bb{C}^{X_1Y_2})$ 
is a family of Kraus operators for both $\cl N_{\Gamma}$ and $\tilde{\Gamma}$.
By Lemma \ref{l_tChoi} and Proposition \ref{p_twisted-Choi},
$\cl K \subseteq \overline{\cl{U}}_{E_{1}} \hspace{-0.1cm}\rightarrow \hspace{-0.05cm}\cl{U}_{E_{2}}$. 
On the other hand, 
\begin{equation}\label{eq_trE1E2}
\overline{\cl{U}}_{E_{1}} \hspace{-0.1cm}\rightarrow \hspace{-0.05cm}\cl{U}_{E_{2}} = 
\cl{U}_{E_{1}\hspace{-0.05cm}\rightarrow \hspace{-0.05cm}E_{2}}.
\end{equation}
It follows that 
$\cl N_{\Gamma}$ fits $E_{1}\hspace{-0.05cm}\rightarrow \hspace{-0.05cm}E_{2}$ and hence 
$E_{1} \leadsto_{\rm t} E_{2}$.

For the converse, assume that $E_{1} \leadsto_{\rm t} E_{2}$ via the 
(classical) NS correlation
$\cl N : \cl{D}_{X_{2}Y_{1}}\rightarrow \cl{D}_{X_{1}Y_{2}}$ of type ${\rm t}$. 
By Remark \ref{classic_supp}, 
\begin{gather*}
	\Gamma_{\cl N}(T) = \sum\limits_{x_{2}, y_{1}}\sum\limits_{x_{1}, y_{2}}\cl N(x_{1}y_{2}|x_{2}y_{1})(\epsilon_{x_{1},x_{2}}\otimes \epsilon_{y_{2},y_{1}})T(\epsilon_{x_{2},x_{1}}\otimes \epsilon_{y_{1},y_{2}})
\end{gather*}
and hence $\Gamma_{\cl N} \in \cl{Q}_{\rm t}$ (see \cite[Remarks 2.2, 4.4, 4.7, 4.11]{tl}). 
By (\ref{eq_trE1E2}), $\Gamma_{\cl N}$ fits 
$\overline{\cl{U}}_{E_{1}} \hspace{-0.1cm}\rightarrow \hspace{-0.05cm}\cl{U}_{E_{2}}$, and hence 
$\overline{\cl{U}}_{E_{1}} \leadsto_{\rm t} \cl{U}_{E_{2}}$.

(ii) is similar, and the details are omitted.
\end{proof}

\begin{lemma}\label{slice_inclusion}
Let 
$\cl{U}_{1} \subseteq \bb{C}^{X_{1}}\otimes \overline{\bb{C}}^{Y_{1}}$
and 
$\cl{U}_{2} \subseteq \overline{\bb{C}}^{X_{2}}\otimes \bb{C}^{Y_{2}}$ be quantum hypergraphs, and 
$\zeta \in \bb{C}^{X_{1}}\otimes \overline{\bb{C}}^{Y_{1}}\otimes \overline{\bb{C}}^{X_{2}}\otimes \bb{C}^{Y_{2}}$.
We have that 

\begin{itemize}
\item[(i)] $L_{\overline{u}_{1}}(\zeta) \in \cl{U}_{2}$ for every 
$\overline{u}_{1} \in \overline{\cl{U}}_{1}$ if and only if 
$\zeta \in \cl{U}_{1}\hspace{-0.1cm}\Rightarrow\hspace{-0.05cm}\cl{U}_{2}$; 
\item[(ii)] 
$L_{\overline{u}_{1}}(\zeta) \in \cl{U}_{2}$ for every $\overline{u}_{1} \in \overline{\cl{U}}_{1}$ 
and $L_{\overline{u}_{2}}(\zeta) \in \cl{U}_{1}$ for every $\overline{u}_{2} \in \overline{\cl{U}}_{2}$
if and only if
$\zeta \in \cl{U}_{1}\hspace{-0.1cm}\Leftrightarrow\hspace{-0.05cm}\cl{U}_{2}$. 
\end{itemize}
\end{lemma}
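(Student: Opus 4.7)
The plan is to reduce both statements to an orthogonal decomposition argument. Set $H := \bb{C}^{X_{1}}\otimes \overline{\bb{C}}^{Y_{1}}$ and $K := \overline{\bb{C}}^{X_{2}}\otimes \bb{C}^{Y_{2}}$, and use the orthogonal splittings $H = \cl{U}_{1}\oplus \cl{U}_{1}^{\perp}$ and (for (ii)) $K = \cl{U}_{2}\oplus \cl{U}_{2}^{\perp}$. The key observation, used throughout, is that for $u_{1}\in \cl{U}_{1}$ and $v\in \cl{U}_{1}^{\perp}$ one has $\overline{u}_{1}(v) = \langle v,u_{1}\rangle = 0$, so that $L_{\overline{u}_{1}}$ vanishes on $\cl{U}_{1}^{\perp}\otimes K$; symmetrically for slices by $\overline{u}_{2}$.

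For (i), the $(\Leftarrow)$ direction is immediate from this observation together with bilinearity of $L_{\overline{u}_{1}}$: on the summand $\cl{U}_{1}\otimes \cl{U}_{2}$ the slice clearly lands in $\cl{U}_{2}$, and on $\cl{U}_{1}^{\perp}\otimes K$ it is $0$. For $(\Rightarrow)$, I would decompose $\zeta = \zeta' + \zeta''$ with $\zeta'\in \cl{U}_{1}\otimes K$ and $\zeta''\in \cl{U}_{1}^{\perp}\otimes K$; since $\zeta''$ already lies in $\cl{U}_{1}\Rightarrow \cl{U}_{2}$ and contributes nothing to any slice by $\overline{u}_{1}$ with $u_{1}\in \cl{U}_{1}$, the hypothesis reduces to $L_{\overline{u}_{1}}(\zeta')\in \cl{U}_{2}$ for all $u_{1}\in \cl{U}_{1}$. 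Expanding $\zeta' = \sum_{i} e_{i}\otimes k_{i}$ in an orthonormal basis $\{e_{i}\}$ of $\cl{U}_{1}$ gives $L_{\overline{e}_{j}}(\zeta') = k_{j}$, so each $k_{j}\in \cl{U}_{2}$ and hence $\zeta'\in \cl{U}_{1}\otimes \cl{U}_{2}$, as required.

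For (ii), the finer decomposition yields $\zeta = \zeta_{11} + \zeta_{10} + \zeta_{01} + \zeta_{00}$ with $\zeta_{ij}\in \cl{U}_{1}^{(i)}\otimes \cl{U}_{2}^{(j)}$, where $\cl{U}_{k}^{(1)} := \cl{U}_{k}$ and $\cl{U}_{k}^{(0)} := \cl{U}_{k}^{\perp}$. The $(\Leftarrow)$ direction is again immediate. For $(\Rightarrow)$, slicing by $\overline{u}_{1}$ with $u_{1}\in \cl{U}_{1}$ annihilates $\zeta_{01}$ and $\zeta_{00}$ and decomposes the remainder as an element of $\cl{U}_{2}$ plus an element of $\cl{U}_{2}^{\perp}$; the hypothesis that the sum lies in $\cl{U}_{2}$ forces $L_{\overline{u}_{1}}(\zeta_{10}) = 0$ for every $u_{1}\in \cl{U}_{1}$, and the same orthonormal basis expansion as in (i) yields $\zeta_{10} = 0$. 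A symmetric argument with the second slice hypothesis produces $\zeta_{01} = 0$, leaving $\zeta = \zeta_{11} + \zeta_{00} \in \cl{U}_{1}\Leftrightarrow \cl{U}_{2}$.

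I expect no real obstacle: the content is pure finite-dimensional linear algebra once the bookkeeping with the bar-notation is in place. The only point to watch is that $\cl{U}_{1}^{\perp}$ is taken inside the Hilbert space $H$ (and similarly $\cl{U}_{2}^{\perp}\subseteq K$) so that $H$ and $K$ really do split as claimed, allowing one to choose the orthonormal bases used in the final step.
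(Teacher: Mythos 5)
Your proof is correct and follows essentially the same orthogonal-decomposition-plus-slicing strategy as the paper; the only (cosmetic) difference is that in part (ii) you use a four-way split of $\zeta$ against $\cl{U}_1 \oplus \cl{U}_1^{\perp}$ and $\cl{U}_2 \oplus \cl{U}_2^{\perp}$ simultaneously, whereas the paper reuses the two-way split from part (i), applies part (i) to handle the $\cl{U}_1 \otimes K$ piece, and only slices the $\cl{U}_1^{\perp}\otimes K$ piece against $\overline{\cl{U}}_2$. Both are fine, and your orthonormal-basis expansion step merely makes explicit an argument the paper leaves implicit.
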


\begin{proof}
(i) For the forward implication, write $\zeta = \zeta
_{1} + \zeta_{2}$ where 
$\zeta_{1} \in \cl{U}_{1}\otimes(\overline{\bb{C}}^{X_{2}}\otimes \bb{C}^{Y_{2}})$ and 
$\zeta_{2} \in \cl{U}_{1}^{\perp}\otimes (\overline{\bb{C}}^{X_{2}}\otimes \bb{C}^{Y_{2}})$. 
If $\overline{u}_{1} \in \overline{\cl{U}}_{1}$ then 
\begin{gather*}
	L_{\overline{u}_{1}}(\zeta)
	 = L_{\overline{u}_{1}}(\zeta_{1}) + L_{\overline{u}_{1}}(\zeta_{2}) 
	 = L_{\overline{u}_{1}}(\zeta_{1}) \in \cl{U}_{2},
\end{gather*}
implying $\zeta_{1} \in \cl{U}_{1}\otimes \cl{U}_{2}$ and hence
$\zeta \in \cl{U}_{1} \hspace{-0.1cm}\Rightarrow\hspace{-0.05cm}\cl{U}_{2}$.
The converse implication is immediate from the definition of the space 
$\cl{U}_{1}\hspace{-0.1cm}\Rightarrow\hspace{-0.05cm}\cl{U}_{2}$.

(ii) 
For the forward implication, decompose $\zeta$ as in (i) and let $u_{2} \in \cl{U}_{2}$. 
Since $L_{\overline{u}_{2}}(\zeta_{1}) \in \cl{U}_{1}$ and 
$L_{\overline{u}_{2}}(\zeta_{2}) \in \cl{U}_{1}^{\perp}$, 
we have that $L_{\overline{u}_{2}}(\zeta_{2}) = 0$.
It follows that $\zeta_{2} \in \cl{U}_{1}^{\perp}\otimes \cl{U}_{2}^{\perp}$, 
and thus $\zeta \in \cl{U}_{1}\hspace{-0.1cm}\Leftrightarrow\hspace{-0.05cm}\cl{U}_{2}$.
As in (i), the converse implication is immediate from the definition of 
$\cl{U}_{1}\hspace{-0.1cm}\Leftrightarrow\hspace{-0.05cm}\cl{U}_{2}$.
\end{proof}

\begin{proposition}\label{affine_sim}
Let 
$\cl{U}_{1} \subseteq \bb{C}^{X_{1}}\otimes \overline{\bb{C}}^{Y_{1}}$
and 
$\cl{U}_{2} \subseteq \overline{\bb{C}}^{X_{2}}\otimes \bb{C}^{Y_{2}}$ be quantum hypergraphs,
and $\Gamma$ be a QNS correlation over $(X_{2}, Y_{1}, X_{1}, Y_{2})$. 
Then 
\begin{itemize}
\item[(i)]
$\cl{U}_{1}\leadsto_{\rm ns} \cl{U}_{2}$ via $\Gamma$ if and only if the map 
$\cl{E} \mapsto \Gamma[\cl{E}]$ restricts to a well-defined affine map from $\cl{Q}(\overline{\cl{U}}_{1})$ into $\cl{Q}(\cl{U}_{2})$;
\item[(ii)]
$\cl{U}_{1}\rightarrow_{\rm ns} \cl{U}_{2}$ via $\Gamma$ if and only if $\cl{E} \mapsto \Gamma[\cl{E}]$ (resp. $\cl{F} \mapsto \Gamma[\cl{F}]$) restricts to a well-defined affine map from $\cl{Q}(\overline{\cl{U}}_{1})$ into $\cl{Q}(\cl{U}_{2})$ (resp. $\cl{Q}(\overline{\cl{U}}_{1}^{\perp})$ into $\cl{Q}(\cl{U}_{2}^{\perp})$). 
\end{itemize}
\end{proposition}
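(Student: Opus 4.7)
The strategy is to convert the Kraus-space conditions via Proposition~\ref{p_twisted-Choi} into range conditions on twisted Choi matrices, and then translate the condition on $\Gamma$ into a family of slicing conditions via Lemma~\ref{slice_inclusion}. Affinity of $\cl{E}\mapsto\Gamma[\cl{E}]$ is immediate from the fact that \eqref{eq_defsim} is linear in $\cl{E}$, so ``well-defined affine map from $\cl{Q}(\overline{\cl{U}}_{1})$ into $\cl{Q}(\cl{U}_{2})$'' reduces to the single requirement that $\Gamma[\cl{E}]\in\cl{Q}(\cl{U}_{2})$ whenever $\cl{E}\in\cl{Q}(\overline{\cl{U}}_{1})$.

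The key computational step is the following. Fix a Kraus representation $\Gamma(T)=\sum_{l}A_{l}TA_{l}^{*}$ and, for each $l$, expand $A_{l}=\sum_{j}B_{l,j}\otimes C_{l,j}$ with $B_{l,j}\in\cl{L}(\bb{C}^{X_{2}},\bb{C}^{X_{1}})$ and $C_{l,j}\in\cl{L}(\bb{C}^{Y_{1}},\bb{C}^{Y_{2}})$. For $\cl{E}(\rho)=\sum_{n}E_{n}\rho E_{n}^{*}$, substituting into \eqref{eq_defsim} (via the natural decomposition $\Gamma=\sum_{l,j,k}(B_{l,j}\cdot B_{l,k}^{*})\otimes(C_{l,j}\cdot C_{l,k}^{*})$) yields
\[
\Gamma[\cl{E}](\rho)=\sum_{l,n}D_{l,n}\rho D_{l,n}^{*},\qquad D_{l,n}:=\sum_{j}C_{l,j}E_{n}B_{l,j},
\]
so the $D_{l,n}$ form a Kraus family for $\Gamma[\cl{E}]$. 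Setting $\zeta_{l}=\theta^{-1}(A_{l})$ and $\eta_{n}=\theta^{-1}(E_{n})$, the identity $\theta((\overline{A}\otimes B)\overline{u})=B\theta(\overline{u})A$ of Lemma~\ref{l_bars}, applied termwise and after reindexing the four tensor factors via $\sigma$, gives
\[
\theta^{-1}(D_{l,n})=L_{\overline{u}_{1}}\bigl(\sigma^{-1}(\zeta_{l})\bigr)\quad\text{with }\overline{u}_{1}=\eta_{n},
\]
where the slice pairs $\eta_{n}\in\overline{\bb{C}}^{X_{1}}\otimes\bb{C}^{Y_{1}}$ against the first two factors of $\sigma^{-1}(\zeta_{l})\in\bb{C}^{X_{1}}\otimes\overline{\bb{C}}^{Y_{1}}\otimes\overline{\bb{C}}^{X_{2}}\otimes\bb{C}^{Y_{2}}$.

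Part (i) is then a direct translation. Forward: if $\Gamma\in\cl{Q}(\cl{U}_{1}\rightarrow\cl{U}_{2})$, Proposition~\ref{p_twisted-Choi} gives $\sigma^{-1}(\zeta_{l})\in\cl{U}_{1}\Rightarrow\cl{U}_{2}$, so Lemma~\ref{slice_inclusion}(i) places $L_{\overline{u}_{1}}(\sigma^{-1}(\zeta_{l}))\in\cl{U}_{2}$ for every $\overline{u}_{1}\in\overline{\cl{U}}_{1}$; for $\cl{E}\in\cl{Q}(\overline{\cl{U}}_{1})$ the vectors $\eta_{n}$ all lie in $\overline{\cl{U}}_{1}$, so $\theta^{-1}(D_{l,n})\in\cl{U}_{2}$ and $\Gamma[\cl{E}]\in\cl{Q}(\cl{U}_{2})$. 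The reverse direction runs the chain backwards, concluding $L_{\overline{u}_{1}}(\sigma^{-1}(\zeta_{l}))\in\cl{U}_{2}$ for every $\overline{u}_{1}$ in the span of Kraus vectors of channels in $\cl{Q}(\overline{\cl{U}}_{1})$, and then invoking Lemma~\ref{slice_inclusion}(i) once this span is identified with $\overline{\cl{U}}_{1}$. For (ii), I apply (i) to both $(\cl{U}_{1},\cl{U}_{2})$ and $(\cl{U}_{1}^{\perp},\cl{U}_{2}^{\perp})$ and use the identity
\[
(\cl{U}_{1}\Rightarrow\cl{U}_{2})\cap(\cl{U}_{1}^{\perp}\Rightarrow\cl{U}_{2}^{\perp})=\cl{U}_{1}\Leftrightarrow\cl{U}_{2},
\]
verified by decomposing along $(\cl{U}_{1}\oplus\cl{U}_{1}^{\perp})\otimes(\cl{U}_{2}\oplus\cl{U}_{2}^{\perp})$; applying $\sigma$ gives $\cl{Q}(\cl{U}_{1}\leftrightarrow\cl{U}_{2})=\cl{Q}(\cl{U}_{1}\rightarrow\cl{U}_{2})\cap\cl{Q}(\cl{U}_{1}^{\perp}\rightarrow\cl{U}_{2}^{\perp})$, which matches the conjunction of the two conditions on $\Gamma[\cl{E}]$.

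The main obstacle I expect is the spanning step in the reverse direction of (i): to feed Lemma~\ref{slice_inclusion}(i) one must argue that every $\overline{u}_{1}\in\overline{\cl{U}}_{1}$ occurs in the span of the Kraus vectors of some channel $\cl{E}\in\cl{Q}(\overline{\cl{U}}_{1})$. This should follow by rescaling $\theta(\overline{u}_{1})$ to norm less than one and completing it to a trace-preserving Kraus set using auxiliary operators drawn from $\tilde{\overline{\cl{U}}}_{1}$, but producing such a completion inside $\tilde{\overline{\cl{U}}}_{1}$ is the technical heart of the proof; the remainder is bookkeeping with the identifications provided by $\theta$, $\sigma$, and Lemma~\ref{l_bars}.
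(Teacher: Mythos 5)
Your proof follows the same route as the paper's: reduce to rank-one Kraus operators for both $\Gamma$ and $\cl E$ via additivity, observe that $\Gamma[\cl E]$ has Kraus family $D_{l,n}=\sum_{j}C_{l,j}E_{n}B_{l,j}$ (the paper writes this operator as $N[M]$), and establish the key identity $\theta^{-1}(D_{l,n})=L_{\eta_{n}}\bigl(\sigma^{-1}(\zeta_{l})\bigr)$ with $\eta_{n}=\theta^{-1}(E_{n})$ --- which is precisely the displayed \emph{Claim} in the published proof --- before invoking Lemma~\ref{slice_inclusion}. For (ii) the paper omits the details; your decomposition $(\cl U_{1}\Rightarrow\cl U_{2})\cap(\cl U_{1}^{\perp}\Rightarrow\cl U_{2}^{\perp})=\cl U_{1}\Leftrightarrow\cl U_{2}$ along $(\cl U_{1}\oplus\cl U_{1}^{\perp})\otimes(\cl U_{2}\oplus\cl U_{2}^{\perp})$ is correct and supplies the missing step cleanly.

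The spanning issue you flag in the converse of (i) is genuine; the paper's converse (``By the proof of (i), this implies that $N[M]\in\tilde{\cl U}_{2}$ whenever $\theta^{-1}(M)\in\overline{\cl U}_{1}$'') tacitly assumes that Kraus operators of channels in $\cl Q(\overline{\cl U}_{1})$ span $\tilde{\overline{\cl U}}_{1}$. Be warned, though, that your proposed fix --- rescaling $\theta(\overline{u}_{1})$ and completing to a trace-preserving Kraus set inside $\tilde{\overline{\cl U}}_{1}$ --- cannot work unconditionally: with $X_{1}=Y_{1}=\{1,2\}$ and $\tilde{\overline{\cl U}}_{1}={\rm span}\{I,\epsilon_{1,2}\}$, the constraint $\sum_{n}K_{n}^{*}K_{n}=I$ forces every Kraus operator of every $\cl E\in\cl Q(\overline{\cl U}_{1})$ to be a multiple of $I$, so the Kraus vectors span only a proper subspace and the completion you describe does not exist inside $\tilde{\overline{\cl U}}_{1}$. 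The equivalence as stated thus needs a non-degeneracy hypothesis on $\cl U_{1}$ (the quantum analogue of the fullness assumption that the paper does invoke in the classical corollary citing \cite[Proposition 3.2]{ght}); under such a hypothesis your completion strategy goes through and the linearity of $M\mapsto N[M]$ finishes the reverse implication.
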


\begin{proof}
(i) 
Let $\cl{E}: M_{X_{1}}\rightarrow M_{Y_{1}}$ be a quantum channel with $\cl{E} \in \cl{Q}(\overline{\cl{U}}_{1})$.
We first note that, by the definition (\ref{eq_defsim}),
$$(\Gamma_1 + \Gamma_2)[\cl E] = \Gamma_1[\cl E] +  \Gamma_2[\cl E],$$
whenever $\Gamma_i : M_{X_2Y_1}\to M_{X_1Y_2}$ are linear maps. 
By Proposition \ref{p_twisted-Choi}, in order to establish the forward implication, 
it suffices to show that, whenever $\Gamma : M_{X_2Y_1}\to M_{X_1Y_2}$ is a completely positive map, 
given by $\Gamma(T) = NTN^*$ for some operator $N : \bb{C}^{X_2Y_1}\to \bb{C}^{X_1Y_2}$, 
such that $\Gamma \in \cl Q(\cl{U}_{1}\hspace{-0.1cm}\rightarrow\hspace{-0.05cm}\cl{U}_{2})$, 
we have that 
$\tilde{\cl K}_{\Gamma[\cl E]}\subseteq \tilde{\cl{U}}_{2}$. 
By additivity, we may further assume that $\cl E$ is a completely positive map, given by 
$\cl E(S) = MSM^*$, for some operator $M : \bb{C}^{X_1}\to \bb{C}^{Y_1}$. 

If $N = \sum_{p = 1}^r A_p\otimes B_p$, where $A_p : \bb{C}^{X_2}\to \bb{C}^{X_1}$ and 
$B_p : \bb{C}^{Y_1}\to \bb{C}^{Y_2}$, let 
$N[M] = \sum_{p=1}^r B_p M A_p$; thus, $N[M] : \bb{C}^{X_2}\to \bb{C}^{Y_2}$ is an 
(easily seen to be well-defined) linear map.
Writing $\Gamma_{p,q} : M_{X_2Y_1}\to M_{X_1Y_2}$ for the map, given by 
$\Gamma_{p,q}(T) = (A_p\otimes B_p)T(A_q\otimes B_q)^*$, we have that,
if $S\in M_{X_2}$, then 
$$\Gamma[\cl E] (S) = \sum_{p,q=1}^r \Gamma_{p,q} [\cl E] (S) 
= \sum_{p,q=1}^r (B_pMA_p) S (B_qMA_q)^* = N[M] S N[M]^*.$$
It thus suffices to show that 
\begin{equation}\label{eq_NM}
N[M]\in \tilde{\cl U}_2.
\end{equation}

We note that $\theta^{-1}(N)\in \overline{\bb{C}}^{X_2}\otimes \overline{\bb{C}}^{Y_1}
\otimes \bb{C}^{X_1}\otimes \bb{C}^{Y_2}$, so that 
$(\sigma\circ \theta^{-1})(N)\in \bb{C}^{X_1}\otimes \overline{\bb{C}}^{Y_1}
\otimes \overline{\bb{C}}^{X_2} \otimes \bb{C}^{Y_2}$. 
On the other hand, 
$\theta^{-1}(M)\in \overline{\bb{C}}^{X_1}\otimes \bb{C}^{Y_1}$;
thus, $\theta^{-1}(M)$ can be viewed as a linear functional on $\bb{C}^{X_1}\otimes \overline{\bb{C}}^{Y_1}$.
We prove the following:

\medskip

\noindent {\it Claim. } $\theta^{-1}(N[M]) = L_{\theta^{-1}(M)}((\sigma\circ \theta^{-1})(N))$.

\smallskip

\noindent {\it Proof of Claim. } By linearity, we may assume that 
$\theta^{-1}(N) = \overline{\xi}_2\otimes\overline{\eta}_1 \otimes \xi_1\otimes \eta_2$, where 
$\xi_i\in \bb{C}^{X_i}$ and $\eta_i \in \bb{C}^{Y_i}$, $i = 1,2$, and 
$\theta^{-1}(M) = \overline{\xi} \otimes\eta$, where $\xi \in \bb{C}^{X_1}$ and 
$\eta\in \bb{C}^{Y_1}$. 
We have that 
$$(\sigma\circ \theta^{-1})(N) = \xi_1 \otimes\overline{\eta}_1 \otimes \overline{\xi}_2\otimes \eta_2,$$
that 
$N = (\xi_1\xi_2^*) \otimes (\eta_2\eta_1^*)$ and 
$M = \eta\xi^*$, and hence that 
$$N[M] = (\eta_2\eta_1^*) (\eta\xi^*) (\xi_1\xi_2^*) = \langle \xi_1,\xi\rangle \langle \eta,\eta_1\rangle \eta_2\xi_2^*.
$$
Thus, 
$$\theta^{-1}(N[M]) = \langle \xi_1,\xi\rangle \langle \eta,\eta_1\rangle \overline{\xi}_2 \otimes \eta_2
= L_{\overline{\xi} \otimes\eta}(\xi_1 \otimes\overline{\eta}_1 \otimes \overline{\xi}_2\otimes \eta_2),
$$
and the claim follows. 

\smallskip

We now show (\ref{eq_NM}). By assumption, 
$(\sigma\circ \theta^{-1})(N) \in \cl{U}_{1}\hspace{-0.1cm}\Rightarrow\hspace{-0.05cm}\cl{U}_{2}$ and 
$\theta^{-1}(M)\in \overline{\cl U}_1$. 
By the Claim and Lemma \ref{slice_inclusion}, $\theta^{-1}(N[M])\in \cl U_2$, and hence 
$N[M]\in \tilde{\cl U}_2$.

Conversely, suppose that the map 
$\cl{E} \mapsto \Gamma[\cl{E}]$ restricts to a well-defined affine map from 
$\cl{Q}(\overline{\cl{U}}_{1})$ into $\cl{Q}(\cl{U}_{2})$.
By Proposition \ref{p_twisted-Choi}, it suffices to assume that $\Gamma$ has the form 
$\Gamma(T) = NTN^*$ for some operator $N : \bb{C}^{X_2Y_1}\to \bb{C}^{X_1Y_2}$. 
By the proof of (i), this implies that 
$N[M]\in \tilde{\cl U}_2$ whenever $\theta^{-1}(M)\in \overline{\cl U}_1$. 
Therefore, by the Claim, $L_{\theta^{-1}(M)}((\sigma\circ \theta^{-1}(N)) \in \mathcal{U}_{2}$ whenever $\theta^{-1}(M) \in \overline{\cl U}_{1}$; now Lemma \ref{slice_inclusion} shows that 
$(\sigma\circ \theta^{-1})(N) \in \cl{U}_{1}\hspace{-0.1cm}\Rightarrow\hspace{-0.05cm}\cl{U}_{2}$, 
and the proof of part (i) is complete. 

(ii) follows in a similar fashion to (i) and the detailed proof is omitted. 
\end{proof}

For a hypergraph $E\subseteq X\times Y$, we let 
$$\cl{C}(E) = \{\cl N : \cl D_X\to \cl D_Y \ : \ \mbox{channel s.t. }
\langle\cl N(\epsilon_{x,x}),\epsilon_{y,y}\rangle = 0 \mbox{ if } (x,y)\not\in E\}.$$
Recall that for full hypergraphs 
$E_{i} \subseteq X_{i}\times Y_{i}, i = 1, 2$, and NS correlation 
$\Gamma$ over $(X_{2}, Y_{1}, X_{1}, Y_{2})$, we have that 
$E_{1}\leadsto_{\rm ns} E_{2}$ via $\Gamma$ if and only if 
$\cl{E} \mapsto \Gamma[\cl{E}]$ restricts to a well-defined affine map of $\cl{C}(E_{1})$ to $\cl{C}(E_{2})$ 
(see \cite[Proposition 3.2]{ght}). 
We note that as a direct result of Proposition \ref{affine_sim}, we have the following strengthened statement.

\begin{corollary}
Let $E_{i} \subseteq X_{i}\times Y_{i}, i = 1, 2$ be full hypergraphs, and $\Gamma$ a NS correlation over $(X_{2}, Y_{1}, X_{1}, Y_{2})$. Then $E_{1}\rightarrow_{\rm ns} E_{2}$ via $\Gamma$ if and only if the map $\cl{E} \mapsto \Gamma[\cl{E}]$ (resp. $\cl{F} \mapsto \Gamma[\cl{F}]$) restricts to a well-defined affine map from 
$\cl{C}(E_{1})$ into $\cl{C}(E_{2})$ (resp. $\cl{C}(E_{1}^{\rm c})$ into $\cl{C}(E_{2}^{\rm c})$).
\end{corollary}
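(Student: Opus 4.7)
The plan is to combine the quasi-homomorphism characterisation (stated immediately before the corollary and derived from \cite[Proposition 3.2]{ght}) with a set-theoretic decomposition of the biconditional relation $E_{1}\leftrightarrow E_{2}$ into its two ``one-way'' pieces. Concretely, I would first verify the elementary identity
$$E_{1}\leftrightarrow E_{2} \;=\; (E_{1}\rightarrow E_{2})\cap (E_{1}^{\rm c}\rightarrow E_{2}^{\rm c}),$$
which is immediate from the logical equivalence $(P \Leftrightarrow Q) \iff (P \Rightarrow Q) \wedge (\neg P \Rightarrow \neg Q)$ applied with $P$ the assertion ``$(x_{1},y_{1})\in E_{1}$'' and $Q$ the assertion ``$(x_{2},y_{2})\in E_{2}$''.

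Since, by definition, $E_{1}\rightarrow_{\rm ns} E_{2}$ via $\Gamma$ amounts to $\supp(\Gamma)\subseteq E_{1}\leftrightarrow E_{2}$, the above identity transforms this relation into the conjunction of $\supp(\Gamma)\subseteq E_{1}\rightarrow E_{2}$ and $\supp(\Gamma)\subseteq E_{1}^{\rm c}\rightarrow E_{2}^{\rm c}$, that is, into the conjunction of $E_{1}\leadsto_{\rm ns} E_{2}$ via $\Gamma$ and $E_{1}^{\rm c}\leadsto_{\rm ns} E_{2}^{\rm c}$ via $\Gamma$. Applying the quasi-homomorphism characterisation to each of these two conjuncts then translates them into the required affine-map conditions $\cl{C}(E_{1})\to \cl{C}(E_{2})$ and $\cl{C}(E_{1}^{\rm c})\to \cl{C}(E_{2}^{\rm c})$, giving the claimed biconditional. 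In this sense the statement is ``direct'' from Proposition \ref{affine_sim}: the latter is precisely the quantum analogue of the quasi-homomorphism characterisation packaged with a perp version, which at the classical level is exactly what the set-theoretic identity above produces.

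The only point that deserves care, but which I do not expect to be a genuine obstacle, is the possibility that $E_{1}^{\rm c}$ or $E_{2}^{\rm c}$ fails to be a full hypergraph even when $E_{1},E_{2}$ are full. In such a degenerate case $\cl{C}(E_{i}^{\rm c})$ is empty and the corresponding affine-map condition becomes vacuous, while the support-side condition $\supp(\Gamma)\subseteq E_{1}^{\rm c}\rightarrow E_{2}^{\rm c}$ reduces to a trivial constraint that can be checked by hand (any element $x_1$ with $\{x_1\}\times Y_1\subseteq E_1$ contributes no nontrivial implication on the left-hand side). Once this routine bookkeeping is disposed of, both directions of the biconditional follow from the two applications of the quasi-homomorphism characterisation.
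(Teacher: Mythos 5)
Your route to the corollary is genuinely different from the paper's. The paper treats the corollary as a consequence of Proposition \ref{affine_sim} (the quantum statement about $\cl Q(\overline{\cl U}_1)$ and $\cl Q(\cl U_2)$) specialised to classical hypergraphs via the identification $\cl U_E^\perp = \cl U_{E^{\rm c}}$. You instead work entirely at the classical level, using the set-theoretic identity $E_1\hspace{-0.1cm}\leftrightarrow\hspace{-0.05cm} E_2 = (E_1\hspace{-0.1cm}\rightarrow\hspace{-0.05cm} E_2)\cap(E_1^{\rm c}\hspace{-0.1cm}\rightarrow\hspace{-0.05cm} E_2^{\rm c})$ together with two applications of the quasi-homomorphism characterisation from \cite[Proposition 3.2]{ght}. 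This is a cleaner and more elementary argument, and it isolates exactly where the ``strengthening'' comes from: the bidirectional relation is the conjunction of the forward relation for $(E_1,E_2)$ and the forward relation for $(E_1^{\rm c},E_2^{\rm c})$.

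However, your handling of the degenerate case is wrong, not just ``routine bookkeeping.'' You assert that when $E_1^{\rm c}$ fails to be full, the support condition $\supp(\Gamma)\subseteq E_1^{\rm c}\hspace{-0.1cm}\rightarrow\hspace{-0.05cm} E_2^{\rm c}$ ``reduces to a trivial constraint.'' That is false. Failure of fullness means some $x_1^0$ has $\{x_1^0\}\times Y_1\subseteq E_1$; the implications coming from $x_1^0$ are vacuous, but any \emph{other} $x_1$ with $(x_1,y_1)\in E_1^{\rm c}$ for some $y_1$ still contributes a genuine constraint. Concretely, take $X_1=\{a,b\}$, $Y_1=\{c,d\}$, $E_1=\{(a,c),(a,d),(b,c)\}$, so $E_1^{\rm c}=\{(b,d)\}$ is non-empty but not full, and $\cl C(E_1^{\rm c})=\emptyset$; take $E_2 = X_2\times Y_2$ so that $\cl C(E_2^{\rm c})=\emptyset$ as well. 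The uniform NS correlation $\Gamma$ then satisfies both affine-map conditions vacuously, yet $\supp(\Gamma)\not\subseteq E_1^{\rm c}\hspace{-0.1cm}\rightarrow\hspace{-0.05cm} E_2^{\rm c}$ (the tuples $(x_2,d,b,y_2)$ violate it), so the backward implication of the corollary fails. In fact this is an issue with the corollary's statement itself (and with the degenerate case of Proposition \ref{affine_sim}(ii)) rather than merely with your proof: the biconditional, as phrased, implicitly requires $E_1^{\rm c}$ and $E_2^{\rm c}$ to be full as well, so that $\cl C(E_i^{\rm c})$ is non-empty and the quasi-homomorphism characterisation applies on the complement side. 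Once that hypothesis is in place, your argument is complete; without it, the claim that the remaining bookkeeping is trivial is precisely where the gap lies.
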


Let $\cl R$ be a relation on the set of all quantum hypergraphs. We say that $\cl R$ is
\begin{itemize}
	\item[(i)] \textit{pseudo-reflexive} if $(\overline{\cl{U}}, \cl{U}) \in \cl R$ for every 
	quantum hypergraph $\cl U$;
	\item[(ii)] \textit{pseudo-transitive} if $(\overline{\cl{U}}_{1}, \cl{U}_{2}) \in \cl R$ and 
	$(\overline{\cl{U}}_{2}, \cl{U}_{3}) \in \cl R$ $\Longrightarrow$
 $(\overline{\cl{U}}_{1}, \cl{U}_{3}) \in \cl R$.
\end{itemize}
A relation which is pseudo-reflexive and pseudo-transitive will be called a \textit{pseudo-quasi-order}.

\begin{theorem}\label{quasi_hom_pseudo_order}
Let ${\rm t} \in \{\rm loc, q, qa, qc, ns\}$. Then the 
relations $\leadsto_{\rm t}$ and $\rightarrow_{\rm t}$
are pseudo-quasi-orders on the set of all quantum hypergraphs. 
\end{theorem}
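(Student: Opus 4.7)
The plan is to verify pseudo-reflexivity and pseudo-transitivity separately for each of $\leadsto_{\rm t}$ and $\rightarrow_{\rm t}$, relying on the two heavy tools already established: the composition theorem for simulators (Theorem \ref{comp_thm}) and the affine-map characterisation of fitting (Proposition \ref{affine_sim}).

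For pseudo-reflexivity, given a quantum hypergraph $\cl U \subseteq \overline{\bb{C}}^{X}\otimes \bb{C}^{Y}$, I take the correlation
$$\Gamma := \id_{M_X}\otimes \id_{M_Y} \in \cl Q_{\rm loc},$$
viewed as a channel $M_{XY}\to M_{XY}$. For any quantum channel $\cl E : M_X\to M_Y$ we have $\Gamma[\cl E] = \id_{M_Y}\circ \cl E\circ \id_{M_X} = \cl E$, so the induced map on channels is the identity. Applying Proposition \ref{affine_sim}(i) with $\cl U_1 = \overline{\cl U}$ and $\cl U_2 = \cl U$, this identity map trivially restricts to a well-defined affine map from $\cl Q(\cl U) = \cl Q(\overline{\overline{\cl U}})$ into $\cl Q(\cl U)$, hence $\overline{\cl U}\leadsto_{\rm loc}\cl U$. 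The additional condition from Proposition \ref{affine_sim}(ii) -- that $\cl F\mapsto \Gamma[\cl F]$ send $\cl Q(\cl U^{\perp})$ into itself -- is also trivially satisfied, giving $\overline{\cl U}\rightarrow_{\rm loc}\cl U$. Since $\cl Q_{\rm loc}\subseteq \cl Q_{\rm t}$ by (\ref{eq_Qinc}), pseudo-reflexivity holds for every type ${\rm t}$.

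For pseudo-transitivity of $\leadsto_{\rm t}$, I take correlations $\Gamma_1$ and $\Gamma_2$ of type ${\rm t}$ witnessing $\overline{\cl U}_1 \leadsto_{\rm t} \cl U_2$ and $\overline{\cl U}_2\leadsto_{\rm t}\cl U_3$, and consider $\Gamma := \Gamma_2\ast\Gamma_1$. Theorem \ref{comp_thm}(i) gives $\Gamma\in\cl Q_{\rm t}$, while (ii) gives $\Gamma[\cl E] = \Gamma_2[\Gamma_1[\cl E]]$ for every channel $\cl E : M_{X_1}\to M_{Y_1}$. By Proposition \ref{affine_sim}(i), the hypothesis supplies well-defined affine maps
$$\cl Q(\cl U_1) \xrightarrow{\Gamma_1[\,\cdot\,]} \cl Q(\cl U_2) \xrightarrow{\Gamma_2[\,\cdot\,]} \cl Q(\cl U_3),$$
whose composition coincides with $\cl E\mapsto \Gamma[\cl E]$. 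A second application of Proposition \ref{affine_sim}(i), now in the converse direction, yields that $\Gamma$ fits $\overline{\cl U}_1\rightarrow \cl U_3$, which combined with $\Gamma\in\cl Q_{\rm t}$ establishes $\overline{\cl U}_1\leadsto_{\rm t}\cl U_3$. For $\rightarrow_{\rm t}$, I proceed identically but also track the orthogonal complements: Proposition \ref{affine_sim}(ii) gives that each $\Gamma_i[\,\cdot\,]$ additionally sends $\cl Q(\cl U_i^{\perp})$ into $\cl Q(\cl U_{i+1}^{\perp})$, so the composition, equal to $(\Gamma_2\ast\Gamma_1)[\,\cdot\,]$, sends $\cl Q(\cl U_1^{\perp})$ into $\cl Q(\cl U_3^{\perp})$; the converse direction of Proposition \ref{affine_sim}(ii) then yields $\overline{\cl U}_1\rightarrow_{\rm t}\cl U_3$.

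No genuine obstacle is expected: once Theorem \ref{comp_thm} and Proposition \ref{affine_sim} are in hand, the argument is bookkeeping -- choosing the identity correlation for reflexivity and the twisted-composition $\Gamma_2\ast \Gamma_1$ for transitivity, then matching the quantifiers. The only minor point to be careful about is the indexing of sets $X_i, Y_i$ in the transitivity step, to ensure that $\Gamma_2 \ast \Gamma_1$ is indeed a channel of the correct type $M_{X_3Y_1}\to M_{X_1Y_3}$ to witness the target relation.
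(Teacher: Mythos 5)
Your proposal is correct and follows essentially the same route as the paper's proof: for pseudo-reflexivity the paper directly observes that the identity channel (which lies in $\cl Q_{\rm loc}$) fits $\overline{\cl U}\leftrightarrow\cl U$, while you reach the same conclusion via Proposition \ref{affine_sim}, and for pseudo-transitivity both you and the paper compose the witnessing correlations via Theorem \ref{comp_thm} and invoke both directions of Proposition \ref{affine_sim} to verify the fitting condition (tracking the orthogonal complements for $\rightarrow_{\rm t}$). The argument is sound and matches the paper in structure and substance.
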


\begin{proof}
Given a quantum hypergraph $\cl{U} \subseteq \overline{\bb{C}}^{X}\otimes \bb{C}^{Y}$, 
the identity channel ${\rm id} : M_{XY}\to M_{XY}$ fits 
$\overline{\cl{U}}\hspace{-0.1cm}\leftrightarrow \hspace{-0.05cm}\cl{U}$; 
since ${\rm id}\in \cl Q_{\rm loc}$, the inclusions 
(\ref{eq_Qinc}) imply that $\cl U_1\leadsto_{\rm t}\cl U_2$. 

Let $\cl{U}_{i} \subseteq \overline{\bb{C}}^{X_{i}}\otimes \bb{C}^{Y_{i}}$, $i = 1, 2, 3$ be quantum hypergraphs, 
and assume that 
$\overline{\cl{U}}_{1} \leadsto_{\rm t} \cl{U}_{2}$ via $\Gamma_{1}$, 
and $\overline{\cl{U}}_{2} \leadsto \cl{U}_{3}$ via $\Gamma_{2}$. 
By Theorem \ref{comp_thm}, $\Gamma_{2}\ast \Gamma_{1} \in \cl{Q}_{\rm t}$. 
We claim that $\overline{\cl{U}}_{1} \leadsto_{\rm t} \cl{U}_{3}$ via $\Gamma_{2}\ast \Gamma_{1}$; 
indeed, as $\Gamma_{1}$ fits $\overline{\cl{U}}_{1}\rightarrow \cl{U}_{2}$, and $\Gamma_{2}$ fits $\overline{\cl{U}}_{2}\rightarrow \cl{U}_{3}$, Proposition \ref{affine_sim} implies that 
$\cl{E} \mapsto \Gamma_{1}[\cl{E}]$ (resp. $\cl{F} \mapsto \Gamma_{2}[\cl{F}]$) restricts to a well-defined map 
from $\cl{Q}(\cl{U}_{1})$ into $\cl{Q}(\cl{U}_{2})$ (resp. from $\cl{Q}(\cl{U}_{2})$ into $\cl{Q}(\cl{U}_{3})$). 
If $\cl{E} \in \cl{Q}(\cl{U}_{1})$ then, by Theorem \ref{comp_thm}, 
$(\Gamma_{2}
\ast\Gamma_{1})[\cl{E}] = \Gamma_{2}[\Gamma_{1}[\cl{E}]] \in \cl{Q}(\cl{U}_{3})$; hence, 
by Proposition \ref{affine_sim}, 
$\overline{\cl{U}}_{1}\leadsto_{\rm t} \cl{U}_{3}$ via $\Gamma_{2}\ast \Gamma_{1}$.

Finally, assume that 
$\overline{\cl{U}}_{1}\rightarrow_{\rm t} \cl{U}_{2}$ via $\Gamma_{1}$, and $\overline{\cl{U}}_{2}\rightarrow_{\rm t} \cl{U}_{3}$ via $\Gamma_{2}$. 
By Proposition \ref{affine_sim}, 
$\cl{E} \mapsto \Gamma_{1}[\cl{E}]$ and $\cl{F} \mapsto \Gamma_{1}[\cl{F}]$ (resp. from $\cl{E}' \mapsto \Gamma_{2}[\cl{E}']$ and $\cl{F}' \mapsto \Gamma_{2}[\cl{F}']$) restrict to well-defined maps from 
$\cl{Q}(\cl{U}_{1})$ into $\cl{Q}(\cl{U}_{2})$,  and $\cl{Q}(\cl{U}_{1}^{\perp})$ into $\cl{Q}(\cl{U}_{2}^{\perp})$ (resp. 
 from $\cl{Q}(\cl{U}_{2})$ into $\cl{Q}(\cl{U}_{3})$, and 
from $\cl{Q}(\cl{U}_{2}^{\perp})$ into $\cl{Q}(\cl{U}_{3}^{\perp})$). 
As in the previous paragraph, another application of Proposition \ref{affine_sim} shows that 
$\overline{\cl{U}}_{1}\rightarrow_{\rm t} \cl{U}_{3}$ via $\Gamma_{2}\ast \Gamma_{1}$. 
\end{proof}

\noindent \indent Recall that for a finite set $Z$, the classical ``diagonal" hypergraph $\Delta_{Z} \subseteq Z\times Z$ is defined by letting
\begin{gather*}
	\Delta_{Z} := \{(z, z): \; z \in Z\}.
\end{gather*}
\begin{proposition}
The implications
\begin{gather*}
	\cl{U}_{1} \leadsto_{\rm loc} \cl{U}_{2} \;\; \Rightarrow \;\; \cl{U}_{1} \leadsto_{\rm q} \cl{U}_{2} \;\;\;\; \text{ and } \;\;\;\; \cl{U}_{1} \leadsto_{\rm q} \cl{U}_{2} \;\; \Rightarrow \;\; \cl{U}_{1} \leadsto_{\rm ns} \cl{U}_{2}
\end{gather*}
\noindent are not reversible. 
\end{proposition}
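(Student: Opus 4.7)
My plan is to reduce, via Proposition~\ref{classical_to_quantum}, to producing \emph{classical} hypergraph separations, and then to draw those from established separations in the non-local game literature.

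For the reduction, it suffices to find finite hypergraphs $E_i \subseteq X_i \times Y_i$, $i=1,2$, such that $E_1 \leadsto_{\rm q} E_2$ while $E_1 \not\leadsto_{\rm loc} E_2$, and a second such pair witnessing $E_1 \leadsto_{\rm ns} E_2$ while $E_1 \not\leadsto_{\rm q} E_2$. Setting $\cl{U}_1 := \overline{\cl{U}}_{E_1}$ and $\cl{U}_2 := \cl{U}_{E_2}$, Proposition~\ref{classical_to_quantum}(i) will then transfer each classical witness to the quantum hypergraph setting.

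For the first (local vs.\ quantum) separation, I would use the well-known gap between the classical and quantum chromatic numbers: take a graph $G$ with $\chi_q(G) < \chi(G)$, for example a Hadamard graph in the sense of \cite{cmnsw,mr}. The synchronous graph-coloring game for $G$ with $c = \chi_q(G)$ colors admits a perfect quantum strategy but no perfect local one; encoding this game as hypergraph data $E_1, E_2$, essentially as done in \cite{ght}, produces the required witness. For the second (quantum vs.\ no-signalling) separation, I would invoke a pseudo-telepathy-style example, or equivalently a graph $G$ whose no-signalling chromatic number is strictly smaller than its quantum chromatic number (furnished for instance by a game with classical/NS perfect strategy but no perfect quantum one, as exhibited in the precursor work \cite{ght}); the same encoding then supplies the second witness.

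The main obstacle is verifying that the chosen game-theoretic gaps actually translate to the \emph{quasi}-homomorphism relation, whose rule $(x_1, y_1) \in E_1 \Rightarrow (x_2, y_2) \in E_2$ is strictly weaker than the biconditional defining $\rightarrow_{\rm t}$. Weakening the winning condition can in principle destroy a classical (or quantum) no-go, which is exactly why the analogous problem for $\rightarrow_{\rm t}$ is left open in the introduction. In practice, however, the chromatic-number and pseudo-telepathy games above are robust enough that the implication-only formulation still forbids low-type strategies, so once the right games are fixed the separations follow directly from the known non-local-game results together with Proposition~\ref{classical_to_quantum}.
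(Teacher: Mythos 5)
Your high-level plan matches the paper's: reduce to \emph{classical} hypergraph separations via Proposition~\ref{classical_to_quantum}(i), then supply those classical separations. Where you diverge is in how the classical half gets supplied. The paper simply invokes \cite[Proposition~3.6]{ght}, which already furnishes hypergraphs $E$, $E'$ and ``diagonal'' targets $\Delta_Z$, $\Delta_{Z'}$ such that $E\leadsto_{\rm q}\Delta_Z$ but $E\not\leadsto_{\rm loc}\Delta_Z$, and $E'\leadsto_{\rm ns}\Delta_{Z'}$ but $E'\not\leadsto_{\rm q}\Delta_{Z'}$ --- i.e.\ the precursor work has already done the game-to-quasi-homomorphism translation. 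You instead propose to re-derive these separations from chromatic-number and pseudo-telepathy games.

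That is where the genuine gap is. You yourself identify the obstacle --- the quasi-homomorphism relation is defined by the one-directional rule $(x_1,y_1)\in E_1 \Rightarrow (x_2,y_2)\in E_2$, which is strictly weaker than the biconditional winning rule used to formalize coloring and pseudo-telepathy games --- and then resolve it by asserting, without argument, that the chosen games ``are robust enough.'' That assertion is precisely the content that needs to be established, and it is not automatic: relaxing a winning condition can, in principle, let in low-type strategies that the original game forbade. The paper sidesteps this entirely by citing a result stated directly in the quasi-homomorphism language; your write-up re-opens the problem and does not close it. A secondary slip: for the second separation you describe ``a game with classical/NS perfect strategy but no perfect quantum one''; since $\cl C_{\rm loc}\subseteq\cl C_{\rm q}$, the ``classical'' alternative there is vacuous --- the witness must be a game with a perfect no-signalling strategy and no perfect quantum one. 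If you replace the sketch with a direct citation of \cite[Proposition~3.6]{ght} (which you gesture at parenthetically), the argument becomes complete and coincides with the paper's.
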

\noindent \begin{proof}
Using \cite[Proposition 3.6]{ght}, there exist finite sets $X, Y, Z, Z'$ and classical hypergraphs $E, E' \subseteq X\times Y$ such that $E \leadsto_{\rm q} \Delta_{Z}$ and $E' \leadsto_{\rm ns} \Delta_{Z'}$ while $E \not \leadsto_{\rm loc} \Delta_{Z}$ and $E' \not \leadsto_{\rm q} \Delta_{Z'}$. Applying Proposition \ref{classical_to_quantum}, we have 
that $\cl{U}_{E} \leadsto_{\rm q} \cl{U}_{\Delta_{Z}}$ while $\cl{U}_{E} \not \leadsto_{\rm loc} \cl{U}_{\Delta_{Z}}$, 
and that $\cl{U}_{E'} \leadsto_{\rm ns} \cl{U}_{\Delta_{Z'}}$ while $\cl{U}_{E'} \not\leadsto_{\rm q} \cl{U}_{\Delta_{Z'}}$. 
\end{proof}


\section{Homomorphisms of local and no-signalling type}\label{s_Morita}

In what follows, we fix finite sets $X_i$ and $Y_i$, $i = 1,2$, and 
quantum hypergraphs $\cl{U}_{1} \subseteq \bb{C}^{X_{1}}\otimes \overline{\bb{C}}^{Y_{1}}$
and $\cl{U}_{2} \subseteq \overline{\bb{C}}^{X_{2}}\otimes \bb{C}^{Y_{2}}$;
we thus have that 
$$ \tilde{\cl{U}}_{1} \subseteq \cl{L}(\overline{\bb{C}}^{X_{1}}, \overline{\bb{C}}^{Y_{1}}) \text{ and } \tilde{\cl{U}}_{2} \subseteq \cl{L}(\bb{C}^{X_{2}}, \bb{C}^{Y_{2}}).$$
Set 
$ \hat{\cl{U}}_{1} := \{\overline{T}: \; T \in \tilde{\cl{U}}_{1}\},$
and note that 
$\hat{\cl{U}}_{1} \subseteq \cl{L}(\bb{C}^{Y_{1}}, \bb{C}^{X_{1}})$. Thus, 
$$ \hat{\cl{U}}_{1}^{*} := \{S^{*}: \; S \in \hat{\cl{U}}_{1}\} \subseteq \cl{L}(\bb{C}^{X_{1}}, \bb{C}^{Y_{1}}).$$

For finite dimensional Hilbert spaces $H$ and $K$, 
we will say that a linear subspace $\cl M\subseteq \cl L(H,K)$ \emph{contains a column isometry}
if there exist operators $A_i\in \cl M$, $i = 1,\dots,m$, such that $\sum_{i=1}^m A_i^* A_i = I$.

\begin{lemma}\label{qloc_quasi}
Let 
$\cl{U}_{1} \subseteq \bb{C}^{X_{1}}\otimes \overline{\bb{C}}^{Y_{1}}$
and $\cl{U}_{2} \subseteq \overline{\bb{C}}^{X_{2}}\otimes \bb{C}^{Y_{2}}$ be quantum hypergraphs. 
The following are equivalent:
\begin{itemize}
	\item[(i)] $\cl{U}_{1} \leadsto_{\rm loc} \cl{U}_{2}$;
	\item[(ii)] there exist subspaces $\cl{L} \subseteq \cl{L}(\bb{C}^{Y_{1}}, \bb{C}^{Y_{2}})$ 
	and $\cl{R} \subseteq \cl{L}(\bb{C}^{X_{2}}, \bb{C}^{X_{1}})$ containing column 
	isometries, such that 
	$\cl{L}\hspace{0.1cm}\hat{\cl{U}}_{1}^{*}\hspace{0.05cm}\cl{R} \subseteq \tilde{\cl{U}}_{2}$.
\end{itemize} 
\end{lemma}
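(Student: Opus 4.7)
The plan is to reduce both implications to a single elementary tensor criterion: for operators $A \in \cl{L}(\bb{C}^{X_{2}}, \bb{C}^{X_{1}})$ and $B \in \cl{L}(\bb{C}^{Y_{1}}, \bb{C}^{Y_{2}})$, I claim that $A \otimes B$ lies in $\widetilde{\cl{U}_{1}\rightarrow \cl{U}_{2}}$ if and only if $BMA \in \tilde{\cl{U}}_{2}$ for every $M \in \hat{\cl{U}}_{1}^{*}$. This is extracted from the \emph{Claim} inside the proof of Proposition \ref{affine_sim} (which gives $\theta^{-1}((A \otimes B)[M]) = L_{\theta^{-1}(M)}((\sigma \circ \theta^{-1})(A \otimes B))$ with $(A \otimes B)[M] = BMA$), combined with Lemma \ref{slice_inclusion}(i), once one identifies $\theta(\overline{\cl{U}}_{1}) = \hat{\cl{U}}_{1}^{*}$. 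Since $\widetilde{\cl{U}_{1}\rightarrow \cl{U}_{2}}$ is a subspace, this upgrades immediately to the following tensor-subspace criterion: for subspaces $\cl{R} \subseteq \cl{L}(\bb{C}^{X_{2}}, \bb{C}^{X_{1}})$ and $\cl{L} \subseteq \cl{L}(\bb{C}^{Y_{1}}, \bb{C}^{Y_{2}})$, one has $\cl{R} \otimes \cl{L} \subseteq \widetilde{\cl{U}_{1}\rightarrow \cl{U}_{2}}$ if and only if $\cl{L}\hat{\cl{U}}_{1}^{*}\cl{R} \subseteq \tilde{\cl{U}}_{2}$. The bookkeeping involving $\sigma$, $\theta$ and complex conjugation is the main technical step; once it is in place the rest is a single product channel construction, applied in each direction.

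For the implication (ii)$\Rightarrow$(i), I pick column isometries $\{A_{j}\} \subseteq \cl{R}$ and $\{B_{k}\} \subseteq \cl{L}$, form the quantum channels $\Phi(T) := \sum_{j} A_{j}TA_{j}^{*}$ and $\Psi(S) := \sum_{k} B_{k}SB_{k}^{*}$, and set $\Gamma := \Phi \otimes \Psi$. Then $\Gamma$ is a single product strategy, hence $\Gamma \in \cl{Q}_{\rm loc}$, with $\tilde{\cl{K}}_{\Gamma} = \tilde{\cl{K}}_{\Phi} \otimes \tilde{\cl{K}}_{\Psi} \subseteq \cl{R} \otimes \cl{L}$. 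The hypothesis of (ii), together with the tensor-subspace criterion above, yields $\tilde{\cl{K}}_{\Gamma} \subseteq \widetilde{\cl{U}_{1}\rightarrow \cl{U}_{2}}$, so that $\Gamma$ fits and $\cl{U}_{1} \leadsto_{\rm loc} \cl{U}_{2}$ via $\Gamma$.

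For the implication (i)$\Rightarrow$(ii), write $\Gamma = \sum_{i=1}^{n} \lambda_{i}\, \Phi_{i} \otimes \Psi_{i}$ with $\lambda_{i} > 0$ and $\Phi_{i}$, $\Psi_{i}$ quantum channels, and fix Kraus representations $\Phi_{i}(T) = \sum_{j} R_{ij}TR_{ij}^{*}$ and $\Psi_{i}(S) = \sum_{\ell} L_{i\ell}SL_{i\ell}^{*}$. The induced Kraus representation of $\Gamma$ gives $\tilde{\cl{K}}_{\Gamma} = {\rm span}\{R_{ij} \otimes L_{i\ell} : i, j, \ell\}$; since this is contained in the subspace $\widetilde{\cl{U}_{1}\rightarrow \cl{U}_{2}}$, for each fixed $i$ the tensor product $\tilde{\cl{K}}_{\Phi_{i}} \otimes \tilde{\cl{K}}_{\Psi_{i}}$ is also contained in it, because its elementary tensors $R_{ij} \otimes L_{i\ell}$ already appear among the Kraus operators of $\Gamma$. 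Fixing any such $i$ and setting $\cl{R} := \tilde{\cl{K}}_{\Phi_{i}}$ and $\cl{L} := \tilde{\cl{K}}_{\Psi_{i}}$, the Kraus operators of $\Phi_{i}$ (resp.\ $\Psi_{i}$) supply column isometries in $\cl{R}$ (resp.\ $\cl{L}$) by the channel property, and the tensor-subspace criterion delivers $\cl{L}\hat{\cl{U}}_{1}^{*}\cl{R} \subseteq \tilde{\cl{U}}_{2}$, as required.
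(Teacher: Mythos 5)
Your proof is correct and follows essentially the same route as the paper's: the heart of the argument is the slice-map identity (the paper's Claim) combined with Lemma \ref{slice_inclusion}(i), which you package up front as the tensor-subspace criterion $\cl{R}\otimes\cl{L}\subseteq\widetilde{\cl{U}_{1}\rightarrow\cl{U}_{2}}\Leftrightarrow\cl{L}\hat{\cl{U}}_{1}^{*}\cl{R}\subseteq\tilde{\cl{U}}_{2}$, and then the column isometries are supplied by Kraus operators exactly as in the paper. The only organizational difference is that where the paper invokes Proposition \ref{p_twisted-Choi} to pass from a general local correlation to a single product channel, you make that reduction explicit by observing that the Kraus space of a convex combination $\sum_i\lambda_i\Phi_i\otimes\Psi_i$ contains each $\tilde{\cl{K}}_{\Phi_i}\otimes\tilde{\cl{K}}_{\Psi_i}$; both justifications are valid.
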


\begin{proof}
We first establish the following auxiliary claim:

\medskip

\noindent {\it Claim. } If $u\in \bb{C}^{X_1}\otimes \overline{\bb{C}}^{Y_1}$, 
$A \in \cl L(\bb{C}^{X_2},\bb{C}^{X_1})$ and $B \in \cl L(\bb{C}^{Y_1},\bb{C}^{Y_2})$ then 
$$(\overline{A}\otimes B)(\overline{u}) = 
L_{\overline{u}}\left((\sigma\circ\theta^{-1})(A\otimes B)\right).$$

\smallskip

\noindent {\it Proof of Claim. } 
We may assume, by linearity, that 
$A = \xi_1\xi_2^*$, $B = \eta_2\eta_1^*$ and $u = \xi_0\otimes \overline{\eta}_0$, where 
$\xi_0, \xi_1\in \bb{C}^{X_1}$, $\xi_2\in \bb{C}^{X_2}$, 
$\eta_0,\eta_1\in \bb{C}^{Y_1}$ and $\eta_2\in \bb{C}^{Y_2}$. 
Thus, 
$A\otimes B = (\xi_1\otimes\eta_2)(\xi_2\otimes\eta_1)^*$ and so 
$$(\sigma\circ\theta^{-1})(A\otimes B) = \sigma(\overline{\xi}_2\otimes \overline{\eta}_1\otimes \xi_1\otimes\eta_2)
= \xi_1 \otimes \overline{\eta}_1\otimes \overline{\xi}_2\otimes\eta_2.$$
It follows that 
\begin{eqnarray*}
(\overline{A}\otimes B)(\overline{u}) & = & 
((\overline{\xi}_2\overline{\xi}_1^*)\otimes (\eta_2\eta_1^*))(\overline{u})
= 
(\overline{\xi}_2 \otimes \eta_2)(\overline{\xi}_1\otimes \eta_1)^*(\overline{\xi}_0\otimes \eta_0)\\
& = & 
\langle \xi_1,\xi_0\rangle \langle \eta_0,\eta_1\rangle \overline{\xi}_2\otimes\eta_2
= L_{\overline{u}}\left((\sigma\circ\theta^{-1})(A\otimes B)\right) .
\end{eqnarray*}

\smallskip

(i)$\Rightarrow$(ii) 
Assume $\Gamma: M_{X_{2}Y_{1}}\rightarrow M_{X_{1}Y_{2}}$ is a channel in $\cl{Q}_{\rm loc}$ which fits $\cl{U}_{1}\rightarrow \cl{U}_{2}$. 
By Proposition \ref{p_twisted-Choi}, we may assume that 
$\Gamma = \Phi \otimes \Psi$, for some quantum channels 
$\Phi: M_{X_{2}}\rightarrow M_{X_{1}}$ and $\Psi: M_{Y_{1}}\rightarrow M_{Y_{2}}$. 
Write $\Phi(S) = \sum_{i=1}^{n}A_{i}SA_{i}^{*}$ and $\Psi(R) = \sum_{j=1}^{m}B_{j}RB_{j}^{*}$
in their Kraus representations; thus, 
$$\Gamma(T) = \sum_{i=1}^{n} \sum_{j=1}^{m} (A_{i}\otimes B_j) T (A_{i}\otimes B_j)^{*}, \ \ \ 
T\in M_{X_2Y_1}.$$
By assumption, 
$$\zeta_{i,j} := (\sigma\circ\theta^{-1})(A_i\otimes B_j) \in 
\hspace{0.05cm} \cl U_1\hspace{-0.1cm}\Rightarrow \hspace{-0.05cm} \cl U_2, 
\ \ \ i\in [n], j\in [m].$$
By the Claim and Lemma \ref{slice_inclusion},  
$(\overline{A}_i\otimes B_j)(\overline{u})\in \cl U_2$ for all $i\in [n]$, all $j\in [m]$, 
and all $\overline{u}\in \overline{\mathcal{U}}_{1}$. 
By Lemma \ref{l_bars}, $B_j\overline{\theta(u)}^* A_i\in \tilde{\cl U}_2$ for all $i\in [n]$, 
all $j\in [m]$ and all $\overline{u}\in \overline{\mathcal{U}}_{1}$. 
Letting $\cl{L} = \tilde{\cl{K}}_{\Psi}$ and $\cl{R} = \tilde{\cl{K}}_{\Phi}$, we thus have that 
$\cl L$ (resp. $\cl R$) contains the column isometry $(B_j)_{j=1}^m$ (resp. $(A_i)_{i=1}^n$) and 
$\cl L\hat{\cl{U}}_{1}^{*}\cl R\subseteq \tilde{\cl U}_2$.

(ii)$\Rightarrow$ (i) 
Let $\cl{R}$ and $\cl{L}$ be subspaces, containing column isometries 
$\{A_{i}\}_{i=1}^{n}$ and $\{B_{j}\}_{j=1}^{m}$, respectively, such that
$\cl{L}\hspace{0.1cm}\hat{\cl{U}}_{1}^{*}\hspace{0.05cm}\cl{R} \subseteq \tilde{\cl{U}}_{2}$. 
Let
$\Phi : M_{X_{2}}\rightarrow M_{X_{1}}$, $\Psi: M_{Y_{1}}\rightarrow M_{Y_{2}}$ be the quantum channels, given by 
$\Phi(S) = \sum_{i=1}^{n}A_{i}SA_{i}^{*}$ and $\Psi(R) = \sum_{j=1}^{m}B_{j}RB_{j}^{*}$. 
We have that $\Gamma := \Phi\otimes \Psi$ is a quantum channel, belonging to 
$\cl{Q}_{\rm loc}$. 
Reversing the steps from the previous paragraph, we see that 
$L_{\overline{u}}((\sigma\circ\theta^{-1})(A_i\otimes B_j))\in \cl U_2$ for all 
$\overline{u}\in \overline{\mathcal{U}}_{1}$, all $i\in [n]$ and all 
$j\in [m]$. 
By Lemma \ref{slice_inclusion}, 
$$(\sigma\circ\theta^{-1})(A_i\otimes B_j)\in 
\hspace{0.05cm} \cl{U}_{1}\hspace{-0.1cm}\Rightarrow\hspace{-0.05cm}\cl{U}_{2}, 
\ \ \ i\in [n], j\in [m],$$
and hence 
$\cl{K}_{\Gamma} \subseteq \cl{U}_{1}\hspace{-0.1cm}\Rightarrow\hspace{-0.05cm}\cl{U}_{2}$, 
implying that $\cl{U}_{1}\leadsto_{\rm loc} \cl{U}_{2}$ via $\Gamma$.
\end{proof}

\begin{lemma}\label{kernel_prf}
If $\cl{L} \subseteq \cl{L}(H, K)$ is a subspace of linear operators between finite
dimensional Hilbert spaces $H, K$, such that 
$\cap_{T \in \cl{L}}\ker T = \{0\}$, then there exist operators $T_{1}, \hdots, T_{n} \in \cl{L}$ such that 
$\cap_{i=1}^{n}\ker T_{i} = \{0\}$.
\end{lemma}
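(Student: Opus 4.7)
The proof should be immediate once one exploits the fact that $\cl{L}$ itself is finite dimensional. I plan to proceed in one of the following two essentially equivalent ways.

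The cleanest approach is to observe that $\cl{L}\subseteq \cl{L}(H,K)$ is a subspace of a finite dimensional vector space, and hence admits a (finite) basis $T_1,\dots,T_n$. The key point is then the identity
\[
\bigcap_{T \in \cl{L}} \ker T \;=\; \bigcap_{i=1}^{n} \ker T_i,
\]
which I would verify by two easy inclusions: the inclusion ``$\subseteq$'' is trivial since each $T_i$ lies in $\cl{L}$, and the inclusion ``$\supseteq$'' follows because every $T\in \cl L$ is a linear combination $T = \sum_i c_i T_i$, so any vector annihilated by all $T_i$ is annihilated by $T$ as well. Given the hypothesis that the left-hand side is $\{0\}$, the right-hand side is $\{0\}$, which is exactly the desired conclusion.

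Alternatively, if one prefers a more constructive argument that does not invoke a basis of $\cl{L}$, I would proceed by iterative dimension reduction. Pick any $T_1 \in \cl{L}$. Having chosen $T_1,\dots,T_k \in \cl{L}$, set $V_k = \bigcap_{i=1}^{k} \ker T_i$. If $V_k \neq \{0\}$, then since $\bigcap_{T\in \cl L}\ker T = \{0\}$, there exists a nonzero $v \in V_k$ and some $T_{k+1} \in \cl L$ with $T_{k+1}v \neq 0$; thus $V_{k+1} \subsetneq V_k$. The dimensions strictly decrease, so the process terminates in at most $\dim H$ steps with $V_n = \{0\}$.

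There is no significant obstacle here; the statement is a finite-dimensional linear algebra fact and the main thing to be careful about is only to use that $\cl{L}(H,K)$ is finite dimensional (so that $\cl{L}$ has a finite basis), or equivalently that strictly decreasing chains of subspaces of $H$ have finite length. I would most likely present the basis argument, as it is short and requires no case distinction.
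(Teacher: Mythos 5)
Your proposal is correct. Your second, ``iterative dimension reduction'' argument is essentially the paper's proof verbatim: pick $T_1$, then at each stage use the hypothesis $\bigcap_{T\in\cl L}\ker T = \{0\}$ to find the next operator strictly shrinking the common kernel, and terminate by finite-dimensionality of $H$. Your preferred basis argument is a genuinely different (and equally valid) route: it exploits finite-dimensionality of $\cl{L}$ rather than of $H$, observing that if $T_1,\dots,T_n$ is a basis of $\cl L$ then $\bigcap_{T\in\cl L}\ker T = \bigcap_{i=1}^n \ker T_i$ by linearity, so the conclusion is immediate. The two approaches give slightly different bounds on $n$ (at most $\dim\cl L$ versus at most $\dim H$), but in this finite-dimensional setting both hypotheses are available and either argument is short and correct; there is no gap.
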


\begin{proof}
Set $\ker \cl{L} := \cap_{T \in \cl{L}}\ker T$; thus, $\ker \cl{L} = \{0\}$. 
Let $T_{1} \in \cl{L}$. If $\ker T_{1} = \{0\}$, we are done; otherwise, let 
$T_2\in \cl L$ be such that $\ker T_1\not\subseteq\ker T_2$ (note that the existence of 
such $T_2$ is guaranteed by the fact that $\ker \cl{L} = \{0\}$). 
If $\ker T_1\cap \ker T_2 = \{0\}$, we are done; otherwise, once again by the assumption in the statement, 
we can choose $T_3\in \cl L$ such that $\ker T_1\cap \ker T_2\not\subseteq\ker T_3$. 
Continuing inductively, the finite dimensionality of $H$ guarantees that the process will terminate. 
\end{proof}

Let $H$ and $K$ be Hilbert spaces. 
Recall \cite{harris} that a \emph{ternary ring of operators (TRO)} is a subspace $\cl M\subseteq \cl B(H,K)$ such that 
$$S, T, R\in \cl M \ \Longrightarrow \ ST^*R\in \cl M.$$
A TRO $\cl M\subseteq \cl B(H,K)$ will be called 
\emph{left non-degenerate} 
(resp. \emph{right non-degenerate}) if $\overline{{\rm span}(\cl M^* K)} = H$
(resp. $\overline{{\rm span}(\cl M H)} = K$).
We call $\cl M$ \emph{non-degenerate}  if it is both left and right non-degenerate. 
Let $\cl S_1\subseteq \cl L(\bb{C}^{X_1},\bb{C}^{Y_1})$ and 
$\cl S_2\subseteq \cl L(\bb{C}^{X_2},\bb{C}^{Y_2})$ be operator spaces. 
Recall \cite{elef, ept} that  $\cl S_1$ and $\cl S_2$ 
are called 
\emph{TRO equivalent} (denoted $\cl S_1\sim_{\rm TRO} \cl S_2$) 
if there exist non-degenerate TRO's 
$\cl{L} \subseteq \cl{L}(\bb{C}^{Y_{1}}, \bb{C}^{Y_{2}})$ and 
	$\cl{R} \subseteq \cl{L}(\bb{C}^{X_{2}}, \bb{C}^{X_{1}})$ such that 

\begin{equation}\label{eq_TROeq}
\cl{L}\hspace{0.1cm}\cl S_1 \hspace{0.05cm}\cl{R} \subseteq \cl S_2
\ \ \mbox{ and } \ \  \cl{L}^{*}\cl S_2 \cl{R}^{*} \subseteq \cl S_1.
\end{equation}
We will say that $\cl S_1$ is \emph{TRO left homomorphic} to $\cl S_2$ 
(denoted $\cl S_1\to_{\rm TRO} \cl S_2$)
if there exists left non-degenerate TRO's $\cl L$ and $\cl R$ for which 
(\ref{eq_TROeq}) holds true.

\begin{theorem}\label{qloc_hom}
$\cl{U}_{1} \subseteq \bb{C}^{X_{1}}\otimes \overline{\bb{C}}^{Y_{1}}$
and $\cl{U}_{2} \subseteq \overline{\bb{C}}^{X_{2}}\otimes \bb{C}^{Y_{2}}$ be quantum hypergraphs. 
The following are equivalent:
\begin{itemize}
	\item[(i)] $\cl{U}_{1} \rightarrow_{\rm loc} \cl{U}_{2}$;
	\item[(ii)] $\hat{\cl{U}}_{1}^{*}\to_{\rm TRO} \tilde{\cl{U}}_{2}$.
\end{itemize}
\end{theorem}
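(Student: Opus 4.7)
The key observation is the set-theoretic identity
$$\cl U_1 \leftrightarrow \cl U_2 \;=\; (\cl U_1 \rightarrow \cl U_2) \;\cap\; (\cl U_1^\perp \rightarrow \cl U_2^\perp),$$
which follows by decomposing $(\bb{C}^{X_1}\otimes\overline{\bb{C}}^{Y_1})\otimes(\overline{\bb{C}}^{X_2}\otimes\bb{C}^{Y_2})$ into the four summands $\cl U_1 \otimes \cl U_2$, $\cl U_1 \otimes \cl U_2^\perp$, $\cl U_1^\perp \otimes \cl U_2$ and $\cl U_1^\perp \otimes \cl U_2^\perp$: the left-hand side consists of the two ``diagonal'' summands, while the right-hand side drops the two ``off-diagonal'' summands separately and then intersects. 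Since Kraus-space inclusions respect intersections, $\cl U_1 \rightarrow_{\rm loc} \cl U_2$ via a local channel $\Phi \otimes \Psi$ is equivalent to the simultaneous validity of $\cl U_1 \leadsto_{\rm loc} \cl U_2$ and $\cl U_1^\perp \leadsto_{\rm loc} \cl U_2^\perp$ via the same $\Phi \otimes \Psi$. This lets us apply Lemma~\ref{qloc_quasi} twice in parallel and combine the outputs.

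For (i)$\Rightarrow$(ii), I would set $\cl L_0 := \tilde{\cl K}_\Psi$ and $\cl R_0 := \tilde{\cl K}_\Phi$. The first quasi-homomorphism gives column isometries in $\cl L_0, \cl R_0$ together with $\cl L_0 \hat{\cl U}_1^* \cl R_0 \subseteq \tilde{\cl U}_2$, while the second gives $\cl L_0 \widehat{(\cl U_1^\perp)}^* \cl R_0 \subseteq \widetilde{\cl U_2^\perp}$. Because $\theta$, the bar, and the adjoint are (anti-)isometries for the Hilbert-Schmidt inner product, one has the intertwining identities $\widetilde{\cl V^\perp} = (\tilde{\cl V})^\perp$ and $\widehat{(\cl V^\perp)}^* = (\hat{\cl V}^*)^\perp$, so a short trace computation rewrites the second containment in the dual form $\cl L_0^* \tilde{\cl U}_2 \cl R_0^* \subseteq \hat{\cl U}_1^*$. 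To upgrade $\cl L_0, \cl R_0$ to TRO's, let $\cl L, \cl R$ be the TROs they generate; an induction on the length of alternating words $S_1 S_2^* S_3 \cdots$ shows that both containments pass through $\cl L, \cl R$, because each reduction step peels a triple off the centre by applying one of the two containments. Since $\cl L, \cl R$ retain the column isometries from $\cl L_0, \cl R_0$, they are left non-degenerate, yielding $\hat{\cl U}_1^* \rightarrow_{\rm TRO} \tilde{\cl U}_2$.

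For (ii)$\Rightarrow$(i), the task is to manufacture a local channel with suitable Kraus spaces from the TRO data. Left non-degeneracy of $\cl L$ implies $\bigcap_{B \in \cl L} \ker B = \{0\}$, so Lemma~\ref{kernel_prf} yields $B_1, \dots, B_m \in \cl L$ with $P := \sum_j B_j^* B_j$ invertible on $\bb{C}^{Y_1}$. Because $\cl L$ is a left non-degenerate TRO, $\cl L^* \cl L$ is a unital C*-subalgebra of $M_{Y_1}$ with unit $I_{Y_1}$, so $P^{-1/2} \in \cl L^* \cl L$ and $\tilde{B}_j := B_j P^{-1/2} \in \cl L \cdot \cl L^* \cdot \cl L \subseteq \cl L$ form a column isometry. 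Producing $\tilde{A}_i \in \cl R$ analogously, I would define $\Phi(S) := \sum_i \tilde{A}_i S \tilde{A}_i^*$, $\Psi(R) := \sum_j \tilde{B}_j R \tilde{B}_j^*$ and $\Gamma := \Phi \otimes \Psi$. Then $\tilde{\cl K}_\Phi \subseteq \cl R$ and $\tilde{\cl K}_\Psi \subseteq \cl L$, so the first TRO containment combined with Lemma~\ref{qloc_quasi} gives $\cl U_1 \leadsto_{\rm loc} \cl U_2$ via $\Gamma$, and the second containment, reversed through the same HS-duality argument as before, simultaneously yields $\cl U_1^\perp \leadsto_{\rm loc} \cl U_2^\perp$ via the same $\Gamma$; by the opening equivalence, $\cl U_1 \rightarrow_{\rm loc} \cl U_2$.

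The main obstacle is the TRO-generation step in (i)$\Rightarrow$(ii): the inductive peeling of alternating words closes only because \emph{both} the direct and the dual containments are available simultaneously, so the full homomorphism hypothesis (rather than merely quasi-homomorphism) is used essentially. The HS-duality bookkeeping itself is routine but must track three interleaved dualizations (bar, adjoint, orthogonal complement) in tandem.
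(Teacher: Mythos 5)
Your proof is correct and reaches the same endpoint as the paper, but it is organised around two devices the paper does not use explicitly. First, you reduce the $\leftrightarrow$ condition to two parallel applications of Lemma~\ref{qloc_quasi} via the set-theoretic identity $\cl U_1\leftrightarrow\cl U_2 = (\cl U_1\rightarrow\cl U_2)\cap(\cl U_1^\perp\rightarrow\cl U_2^\perp)$, which is a clean way to package the ``iff'' in Lemma~\ref{slice_inclusion}(ii); the paper instead goes back to the definitions and verifies the additional containment $(\sigma\circ\theta^{-1})(A_i^*\otimes B_j^*)\in\overline{\cl U}_2\Rightarrow\overline{\cl U}_1$ directly. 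Second, you convert between the two forms of the containment ($\cl L_0\,\widehat{(\cl U_1^\perp)}^*\,\cl R_0\subseteq\widetilde{\cl U_2^\perp}$ versus $\cl L_0^*\,\tilde{\cl U}_2\,\cl R_0^*\subseteq\hat{\cl U}_1^*$) via Hilbert--Schmidt duality, using that $\theta$, bar and adjoint are (anti-)unitaries; the paper instead re-derives the dual containment through the Claim in Lemma~\ref{qloc_quasi} together with the identity $\tilde{\cl U} = \hat{(\overline{\cl U})}^*$ and Lemma~\ref{slice_inclusion}. Both routes are valid, yours is somewhat more modular, the paper's more self-contained.

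Two small points you should tighten. In the (i)$\Rightarrow$(ii) TRO-generation step, the ``peeling'' closes outright only for words of the \emph{same} odd length in $\cl L_0$ and $\cl R_0$; for unbalanced words you need to pad the shorter side using the column isometry, e.g.\ replace a word $R$ by $\sum_p R(R_0^{(p)})^*R_0^{(p)}$ with $\sum_p (R_0^{(p)})^*R_0^{(p)} = I_{X_2}$, which is available exactly because $\cl R_0$ contains a column isometry (and similarly for $\cl L_0$). In the (ii)$\Rightarrow$(i) direction, your justification ``because $\cl L$ is a left non-degenerate TRO, $\cl L^*\cl L$ is unital with unit $I_{Y_1}$'' has the logical order reversed: left non-degeneracy gives $\bigcap_{B\in\cl L}\ker B = \{0\}$, hence via Lemma~\ref{kernel_prf} a finite family with $P = \sum_j B_j^*B_j$ invertible; it is then the invertibility of $P$ (a positive element of the closed $*$-algebra $\cl L^*\cl L$) that forces $I_{Y_1}\in\cl L^*\cl L$ by continuous functional calculus on $C^*(P)$. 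With that adjustment, $P^{-1/2}\in\cl L^*\cl L$ and $\tilde{B}_j = B_jP^{-1/2}\in\cl L\cl L^*\cl L\subseteq\cl L$ as you say, which is an elementary substitute for the paper's appeal to TRO functional calculus.
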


\begin{proof}
(i)$\Rightarrow$(ii) 
Assume $\Gamma: M_{X_{2}Y_{1}}\rightarrow M_{X_{1}Y_{2}}$ is a channel in $\cl{Q}_{\rm loc}$ which fits 
$\cl{U}_{1}\hspace{-0.1cm}\leftrightarrow\hspace{-0.05cm} \cl{U}_{2}$; 
as in the proof of Lemma \ref{qloc_quasi}, 
assume that $\Gamma = \Phi\otimes \Psi$, for some quantum channels 
$\Phi: M_{X_{2}}\rightarrow M_{X_{1}}$ and $\Psi: M_{Y_{1}}\rightarrow M_{Y_{2}}$. 
Write $\Phi(S) = \sum_{i=1}^{n}A_{i}SA_{i}^{*}$ and $\Psi(R) = \sum_{j=1}^{m}B_{j}RB_{j}^{*}$
in their Kraus representations, and let 
$\cl{L}_0 = \tilde{\cl{K}}_{\Psi}$ and $\cl{R}_0 = \tilde{\cl{K}}_{\Phi}$. 
By Lemma \ref{qloc_quasi}, 
\begin{equation}\label{eq_forTRO1}
\cl{L}_0 \hspace{0.1cm}\hat{\cl{U}}_{1}^{*}\hspace{0.05cm}\cl{R}_0 \subseteq \tilde{\cl{U}}_{2}.
\end{equation}

By Lemma \ref{l_bars}, 
\begin{equation}\label{eq_stars0}
\theta((\overline{A}_i^*\otimes B_j^*)(v)) = B_j^*\theta(v)A_i^*, \ \ 
v\in \overline{\bb{C}}^{X_2}\otimes \bb{C}^{Y_2}, i\in [n], j\in [m].
\end{equation}
Note that 
$(\overline{A}_i^*\otimes B_j^*)(v)\in \overline{\bb{C}}^{X_1}\otimes \bb{C}^{Y_1}$. 
By the Claim in the proof of Lemma \ref{qloc_quasi}, 
\begin{equation}\label{eq_stars}
(\overline{A}_i^*\otimes B_j^*)(v) = 
L_{v}\left((\sigma\circ\theta^{-1})(A_i^*\otimes B_j^*)\right), \ \ \ \ \ \ 
i\in [n], j\in [m].
\end{equation}
We claim that, for any subspace $\mathcal{U} \subseteq \overline{\bb{C}}^{X}\otimes \bb{C}^{Y}$ we have 
\begin{equation}\label{eq_for_equiv_btwn_hat}
	\tilde{\mathcal{U}} = \hat{(\overline{\mathcal{U}})}^{*}.
\end{equation}
\noindent Indeed, using  Lemma \ref{l_bars} and the fact that $\bb{C}^{X}, \bb{C}^{Y}$ are finite dimensional,  
for $u \in \mathcal{U}$ we have
$ \theta(u) = \theta(\overline{\overline{u}}) = \overline{\theta(\overline{u})}^{*}$.
As left-most element in the last string of equalities lies (by definition) in $\tilde{\mathcal{U}}$, while the right-most 
lies in $\hat{(\overline{\mathcal{U}})}^{*}$, identity (\ref{eq_for_equiv_btwn_hat}) is established. 

We next show that
\begin{equation}\label{eq_symst}
(\sigma \circ \theta^{-1})(A_{i}^{*}\otimes B_{j}^{*}) \in \overline{\mathcal{U}}_{2} \Rightarrow \overline{\mathcal{U}}_{1}, 
\ \ \ i\in [n], j\in [m].
\end{equation}
To this end, fix $i \in [n]$ and $j \in [m]$, and assume without loss of generality
(as in the proof of Lemma \ref{qloc_quasi}) that $A_{i} = \xi_{1}\xi_{2}^{*}$ and $B_{j} = \eta_{2}\eta_{1}^{*}$, 
where $\xi_{1} \in \bb{C}^{X_{1}}, \xi_{2} \in \bb{C}^{X_{2}}, \eta_{1} \in \bb{C}^{Y_{1}}$ and $\eta_{2} \in \bb{C}^{Y_{2}}$. Then $A_{i}^{*}\otimes B_{j}^{*} = (\xi_{2}\otimes \eta_{1})(\xi_{1}\otimes \eta_{2})^{*}$, and so
$$	(\sigma\circ \theta^{-1})(A_{i}^{*}\otimes B_{j}^{*}) = 
	\sigma(\overline{\xi}_{1}\otimes \overline{\eta}_{2}\otimes \xi_{2}\otimes \eta_{1}) =  
 \xi_{2}\otimes \overline{\eta}_{2}\otimes \overline{\xi}_{1}\otimes \eta_{1}.$$
Identity (\ref{eq_symst}) now follows 
by an application of Lemma \ref{slice_inclusion}, using an argument similar to that of Lemma \ref{qloc_quasi}.
In conjunction with (\ref{eq_forTRO1}) and (\ref{eq_for_equiv_btwn_hat}), identity (\ref{eq_symst})
implies

\begin{equation}\label{eq_forTRO2}
\cl{L}_0^* \hspace{0.1cm} \tilde{\cl{U}}_{2} \hspace{0.05cm}\cl{R}_0^* \subseteq \hat{\cl{U}}_{1}^{*}.
\end{equation}

Let 
$\cl{L}$ (resp. $\cl R$) be the TRO, generated by $\cl L_0$ (resp. $\cl R_0$). 
Using (\ref{eq_forTRO1}) and (\ref{eq_forTRO2}), it is straightforward to see
that 
$\cl{L}\hspace{0.1cm}\hat{\cl{U}}_{1}^{*}\hspace{0.05cm}\cl{R} \subseteq \tilde{\cl{U}}_{2}$ and 
$\cl{L}^{*}\tilde{\cl{U}}_{2} \cl{R}^{*} \subseteq \hat{\cl{U}}_{1}^{*}$.

(ii)$\Rightarrow$(i) 
Let $\cl{L}$ and $\cl{R}$ be TRO's satisfying the conditions of (ii). 
Since $\cl{L}^{*}\bb{C}^{Y_{2}} = \bb{C}^{Y_{1}}$ and $\cl{R}^{*}\bb{C}^{X_{1}} = \bb{C}^{X_{2}}$, we have that 
$$\cap_{L \in \cl{L}}\ker L = \{0\}  \ \ \mbox{ and } \ \  
\cap_{R \in \cl{R}} \ker R = \{0\}.$$
By Lemma \ref{kernel_prf},  there exist 
operators $A_{1}, \hdots, A_{n} \in \tilde{\cl{R}}$ (resp. $B_{1}, \hdots, B_{m} \in \tilde{\cl{L}}$) such that 
$$ \cap_{i=1}^{n}\ker A_{i} = \{0\} \ \ \mbox{ and } \ \  \cap_{j=1}^{m}\ker B_{j} = \{0\}.$$
We have that the operators
$K_{1} := \sum\limits_{i=1}^{n}A_{i}^{*}A_{i}$ and $K_{2} := \sum\limits_{j=1}^{m}B_{j}^{*}B_{j}$ are invertible.
By TRO functional calculus (see \cite[Section 3]{harris}),
$\tilde{A}_{i} := A_{i}K_{1}^{-1/2} \in \tilde{\cl{R}}$, $i\in [n]$  
(resp. $\tilde{B}_{j} := B_{j}K_{2}^{-1/2} \in \tilde{\cl{L}}$, $j \in [m]$). 
Let $\Phi : M_{X_{2}}\rightarrow M_{X_{1}}$ and $\Psi : M_{Y_{1}}\rightarrow M_{Y_{2}}$ 
be the quantum channels, given by 
$\Phi(S) = \sum_{i=1}^{n}\tilde{A}_{i}S\tilde{A}_{i}^{*}$ and $\Psi(R) = \sum_{j=1}^{m}\tilde{B}_{j}R\tilde{B}_{j}^{*}$. 
By Lemma \ref{qloc_quasi},
$\cl{U}_{1}\leadsto_{\rm loc} \cl{U}_{2}$ via $\Phi\otimes \Psi$.
On the other hand, (\ref{eq_symst}) implies that
$$(\sigma \circ \theta^{-1})(A_{i}\otimes B_{j}) \in 
(\cl{U}_{1}\otimes \cl{U}_{2})
+ \left(\overline{\bb{C}}^{X_{1}}\otimes \bb{C}^{Y_{1}}\right)\otimes \cl{U}_{2}^{\perp},$$
$i\in [n], j\in [m]$.
An application of Lemma \ref{slice_inclusion}
now shows that $\cl{U}_{1}\rightarrow_{\rm loc}\cl{U}_{2}$ via $\Phi\otimes \Psi$. 
\end{proof}

Let 
$\cl{U}_{1} \subseteq \bb{C}^{X_{1}}\otimes \overline{\bb{C}}^{Y_{1}}$
and 
$\cl{U}_{2} \subseteq \overline{\bb{C}}^{X_{2}}\otimes \bb{C}^{Y_{2}}$ be quantum hypergraphs.
For a correlation type ${\rm t}$, we will say that 
$\cl{U}_{1}$ is \emph{fully ${\rm t}$-homomorphic} to $\cl{U}_{2}$,
if there exists a quantum channel 
$\Gamma: M_{X_{2}Y_{1}}\rightarrow M_{X_{1}Y_{2}}$ with $\Gamma \in \cl{Q}_{\rm t}$
such that $\Gamma(I)$ is invertible and 
$\Gamma \in \cl{Q}(\cl{U}_{1}\hspace{-0.1cm}\leftrightarrow \hspace{-0.05cm}\cl{U}_{2})$.

\begin{theorem}\label{c_qloc_hom}
$\cl{U}_{1} \subseteq \bb{C}^{X_{1}}\otimes \overline{\bb{C}}^{Y_{1}}$
and $\cl{U}_{2} \subseteq \overline{\bb{C}}^{X_{2}}\otimes \bb{C}^{Y_{2}}$ be quantum hypergraphs. 
The following are equivalent:
\begin{itemize}
	\item[(i)] $\cl U_1$ is fully ${\rm loc}$-homomorphic to $\cl U_2$; 
	\item[(ii)] $\hat{\cl{U}}_{1}^{*} \sim_{\rm TRO} \tilde{\cl{U}}_{2}$.
	\end{itemize}
\end{theorem}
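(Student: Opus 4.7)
The plan is to bootstrap from Theorem \ref{qloc_hom} by identifying exactly what extra conditions the word ``fully'' imposes on the Kraus data, and showing this extra condition is precisely right non-degeneracy of the relevant TROs.

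First, I would note that if $\Gamma = \Phi\otimes\Psi$ is a local QNS correlation, then $\Gamma(I) = \Phi(I)\otimes \Psi(I)$, so $\Gamma(I)$ is invertible iff both $\Phi(I)$ and $\Psi(I)$ are. For a quantum channel $\Phi(S) = \sum_{i=1}^n A_i S A_i^*$ with $A_i \in \cl L(\bb{C}^{X_2},\bb{C}^{X_1})$, the operator $\Phi(I) = \sum_i A_i A_i^*$ is invertible if and only if $\bigcap_i \ker A_i^* = \{0\}$, i.e., $\sum_i \ran(A_i) = \bb{C}^{X_1}$. Translated into the language of the Kraus space, this is exactly the statement that $\tilde{\cl K}_\Phi\hspace{0.05cm} \bb{C}^{X_2} = \bb{C}^{X_1}$, i.e., that $\tilde{\cl K}_\Phi$ is right non-degenerate as a subspace of $\cl L(\bb{C}^{X_2},\bb{C}^{X_1})$. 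The analogous statement holds for $\tilde{\cl K}_\Psi$ and $\Psi(I)$.

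For (i)$\Rightarrow$(ii), given a fully loc-homomorphism via $\Gamma = \Phi\otimes \Psi$, Theorem \ref{qloc_hom} already produces TROs $\cl R$ and $\cl L$ (generated by $\tilde{\cl K}_\Phi$ and $\tilde{\cl K}_\Psi$ respectively) satisfying $\cl L\hat{\cl U}_1^*\cl R \subseteq \tilde{\cl U}_2$ and $\cl L^*\tilde{\cl U}_2\cl R^* \subseteq \hat{\cl U}_1^*$. These are automatically left non-degenerate because $\Phi,\Psi$ being channels means the Kraus operators form column isometries (so $\bigcap\ker A_i = \bigcap\ker B_j = \{0\}$). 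Invertibility of $\Phi(I)$ and $\Psi(I)$ gives right non-degeneracy of $\tilde{\cl K}_\Phi$ and $\tilde{\cl K}_\Psi$ as above, which passes to the generated TROs (since the TRO contains the generating space). Hence $\cl L,\cl R$ are non-degenerate and $\hat{\cl U}_1^*\sim_{\rm TRO} \tilde{\cl U}_2$.

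For (ii)$\Rightarrow$(i), I would adapt the construction from (ii)$\Rightarrow$(i) of Theorem \ref{qloc_hom}, but take more Kraus operators. Non-degeneracy of $\cl R$ gives both $\bigcap_{R\in\cl R}\ker R = \{0\}$ (from $\cl R^*\bb{C}^{X_1} = \bb{C}^{X_2}$) and $\bigcap_{R\in \cl R}\ker R^* = \{0\}$ (from $\cl R\bb{C}^{X_2} = \bb{C}^{X_1}$). Apply Lemma \ref{kernel_prf} to $\cl R$ to pick $A_1,\dots, A_n\in \cl R$ with $\bigcap_i \ker A_i = \{0\}$, and separately apply Lemma \ref{kernel_prf} to the space $\{R^* : R\in \cl R\}\subseteq \cl L(\bb{C}^{X_1},\bb{C}^{X_2})$ to pick $A_{n+1},\dots, A_{n+p}\in \cl R$ with $\bigcap_{i>n} \ker A_i^* = \{0\}$. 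The combined list still satisfies $\bigcap_{i=1}^{n+p}\ker A_i = \{0\}$, so $K_1 = \sum_{i=1}^{n+p} A_i^*A_i$ is invertible and $\tilde A_i := A_i K_1^{-1/2} \in \cl R$ by TRO functional calculus; the channel $\Phi(S) = \sum\tilde A_i S \tilde A_i^*$ satisfies $\Phi(I) = \sum_i A_i K_1^{-1}A_i^*$, which is invertible because $\bigcap\ker A_i^* = \{0\}$ and $K_1^{-1} > 0$. Do the symmetric construction on $\cl L$ to get $\Psi$. Then $\Gamma := \Phi\otimes\Psi$ is local with $\Gamma(I)$ invertible, and Theorem \ref{qloc_hom} (applied with the same TROs, now only using left non-degeneracy which certainly holds) verifies that $\Gamma$ fits $\cl U_1\hspace{-0.1cm}\leftrightarrow\hspace{-0.05cm}\cl U_2$, giving a full loc-homomorphism.

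The main obstacle I anticipate is the bookkeeping in (ii)$\Rightarrow$(i): one must verify that enlarging the Kraus list to ensure the range-spanning property does not destroy the fitting conditions $\cl L\hat{\cl U}_1^*\cl R \subseteq \tilde{\cl U}_2$ and $\cl L^*\tilde{\cl U}_2\cl R^* \subseteq \hat{\cl U}_1^*$. However, this is automatic because the fitting conditions depend only on membership of the Kraus operators in $\cl R$ and $\cl L$, not on the specific normalization, and the proof of Theorem \ref{qloc_hom} already shows that any such choice yields a local homomorphism via $\Phi\otimes\Psi$.
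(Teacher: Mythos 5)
Your proposal is correct and follows essentially the same approach as the paper: for (i)$\Rightarrow$(ii) you read off right non-degeneracy of the generated TROs from invertibility of $\Phi(I)$, $\Psi(I)$ (with left non-degeneracy automatic from the channel normalization), and for (ii)$\Rightarrow$(i) you use Lemma \ref{kernel_prf} twice — once on $\cl R$ and once on $\cl R^*$ — to assemble a Kraus list whose normalization $\tilde A_i = A_i K_1^{-1/2}$ yields a channel $\Phi$ with $\Phi(I)$ invertible, exactly the paper's construction with the indices $n_0$ and $n$. The only differences are cosmetic (indexing and the remark that the normalization does not affect the fitting condition, which the paper leaves implicit).
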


\begin{proof}
(i)$\Rightarrow$(ii) 
Let $\Gamma: M_{X_{2}Y_{1}}\rightarrow M_{X_{1}Y_{2}}$ be a local correlation 
that fits 
$\cl{U}_{1}\hspace{-0.1cm}\rightarrow\hspace{-0.05cm}\cl{U}_{2}$, 
such that $\Gamma(I)$ is invertible. 
Using the notation from the proof of Theorem \ref{qloc_hom}, 
we have that 
$\left(\sum_{i=1}^n A_iA_i^*\right)\otimes \left(\sum_{j=1}^m B_jB_j^*\right)$ is invertible, that is, 
the operators 
$\sum_{i=1}^n A_iA_i^*$ and $\sum_{j=1}^m B_jB_j^*$ are invertible. 
It follows that the TRO's $\cl L$ and $\cl R$ are non-degenerate, and hence, by the proof of 
Theorem \ref{qloc_hom},  $\hat{\cl{U}}_{1}^{*} \sim_{\rm TRO} \tilde{\cl{U}}_{2}$.

(ii)$\Rightarrow$(i)
Let  $\cl{L} \subseteq \cl{L}(\bb{C}^{Y_{1}}, \bb{C}^{Y_{2}})$ and 
$\cl{R} \subseteq \cl{L}(\bb{C}^{X_{2}}, \bb{C}^{X_{1}})$ be non-degenerate TRO's
for which (\ref{eq_TROeq}) holds true. 
Using Lemma \ref{kernel_prf}, choose 
operators $A_{1}, \hdots$, $A_{n_0}$, $A_{n_0+1}, \hdots, A_{n}  \in \tilde{\cl{R}}$ 
(resp. $B_{1}, \hdots, B_{m_0}, B_{m_0+1}, \hdots, B_{m} \in \tilde{\cl{L}}$) such that 
\begin{equation}\label{eq_withs}
\cap_{i=1}^{n_0}\ker A_{i} = \{0\}, \ \ \ \cap_{i=n_0+1}^{n}\ker A_{i}^* = \{0\},
\end{equation}
$$\cap_{j=1}^{m_0}\ker B_{j} = \{0\} \ \mbox{ and } \  \cap_{j=m_0+1}^{m}\ker B_{j}^* = \{0\}.$$
We thus have that the operators
$K_{1} := \sum\limits_{i=1}^{n}A_{i}^{*}A_{i}$,  and 
$K_{2} := \sum\limits_{j=1}^{m}B_{j}^{*}B_{j}$
are both invertible.
Let $\Phi_0 : M_{X_2}\to M_{X_1}$ and $\Psi_0 : M_{Y_1}\to M_{Y_2}$ be the 
completely positive maps, given by 
$\Phi_0(S) = \sum_{i=1}^{n} A_{i}S A_{i}^{*}$ and $\Psi_0(R) = \sum_{j=1}^{m} B_{j}R B_{j}^{*}$.
Following the proof of Theorem \ref{qloc_hom}, set 
$\tilde{A}_{i} := A_{i}K_{1}^{-1/2} \in \tilde{\cl{R}}$, $i\in [n]$  
(resp. $\tilde{B}_{j} := B_{j}K_{2}^{-1/2} \in \tilde{\cl{L}}$, $j \in [m]$), and let 
$\Phi : M_{X_2}\to M_{X_1}$ and $\Psi : M_{Y_1}\to M_{Y_2}$ be the 
quantum channels, given by 
$\Phi(S) = \sum_{i=1}^{n} \tilde{A}_{i}S \tilde{A}_{i}^{*}$ and 
$\Psi(R) = \sum_{j=1}^{m} \tilde{B}_{j}R \tilde{B}_{j}^{*}$. 
We have that 
$\Phi(I) = \sum_{i=1}^{n} A_{i} K_{1}^{-1}A_{i}^{*}$. 
The second of conditions (\ref{eq_withs}) implies that 
$\cap_{i=n_0+1}^{n}\ker K_1^{-1/2} A_{i}^* = \{0\}$, showing that $\Phi(I)$ is invertible. 
Similarly, $\Psi(I)$ is invertible, and hence $(\Phi\otimes\Psi)(I)$ is invertible.
The proof of Theorem \ref{qloc_hom} now implies that 
$\cl U_1$ is fully ${\rm loc}$-homomorphic to $\cl U_2$.
\end{proof}

We finish with a characterisation of no-signalling homomorphisms.

\begin{theorem}\label{qns_hom}
Let $\Gamma$ be a QNS correlation over $(X_{2}, Y_{1}, X_{1}, Y_{2})$. The following are equivalent:
\begin{itemize}
	\item[(i)] $\cl{U}_{1}\rightarrow_{\rm ns} \cl{U}_{2}$ via $\Gamma$;
	\item[(ii)] 
	$N[U_{1}] \in \tilde{\cl{U}}_{2}$ and $N^{*}[U_{2}] \in \hat{\cl{U}}_{1}^{*}$ 
	whenever $N \in \cl{K}_{\Gamma}, U_{1} \in \hat{\cl{U}}_{1}^{*}$ and $U_{2} \in \tilde{\cl{U}}_{2}$. 
\end{itemize}
\end{theorem}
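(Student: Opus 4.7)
My plan is to unpack the definition of $\cl U_1\rightarrow_{\rm ns}\cl U_2$ via $\Gamma$ into a Kraus-operator-wise membership condition, apply Lemma \ref{slice_inclusion}(ii) to split this into two slice conditions, and then translate each of the slice conditions into the form stated in (ii) via the slice identity proved within Proposition \ref{affine_sim} and its $N^*$-analogue.

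First, by the definition of $\rightarrow_{\rm ns}$, the condition ``$\cl U_1\rightarrow_{\rm ns}\cl U_2$ via $\Gamma$'' is $\tilde{\cl K}_\Gamma\subseteq\widetilde{\cl U_1\leftrightarrow\cl U_2}$, equivalently (by linearity) $\theta^{-1}(N)\in \cl U_1\leftrightarrow\cl U_2=\sigma(\cl U_1\Leftrightarrow\cl U_2)$ for every Kraus operator $N$ of $\Gamma$. Undoing $\sigma$ and invoking Lemma \ref{slice_inclusion}(ii), this is in turn equivalent to requiring, for every such $N$,
\[L_{\overline{u}_1}\bigl((\sigma\circ\theta^{-1})(N)\bigr)\in\cl U_2\quad\text{and}\quad L_{\overline{u}_2}\bigl((\sigma\circ\theta^{-1})(N)\bigr)\in\cl U_1,\]
for all $\overline{u}_1\in\overline{\cl U}_1$ and $\overline{u}_2\in\overline{\cl U}_2$, where $(\sigma\circ\theta^{-1})(N)$ is regarded as an element of $\bb{C}^{X_1}\otimes\overline{\bb{C}}^{Y_1}\otimes\overline{\bb{C}}^{X_2}\otimes\bb{C}^{Y_2}$.

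Second, I match each slice condition against the corresponding clause of (ii). The Claim proved inside Proposition \ref{affine_sim} yields $\theta^{-1}(N[M])=L_{\theta^{-1}(M)}((\sigma\circ\theta^{-1})(N))$ for every $M\in\cl L(\bb{C}^{X_1},\bb{C}^{Y_1})$; taking $M=\theta(\overline{u}_1)$, the first slice condition becomes $N[U_1]\in\tilde{\cl U}_2$ for every $U_1\in\widetilde{\overline{\cl U}_1}$, and identity (\ref{eq_for_equiv_btwn_hat}) rewrites $\widetilde{\overline{\cl U}_1}$ as $\hat{\cl U}_1^*$. For the second, I will establish the analogous rank-one identity
\[\theta^{-1}(N^*[U_2])=\overline{L_{\overline{u}_2}\bigl((\sigma\circ\theta^{-1})(N)\bigr)}\]
whenever $U_2=\theta(u_2)$ with $u_2\in\overline{\bb{C}}^{X_2}\otimes\bb{C}^{Y_2}$; since the bar carries a subspace onto its conjugate, the second slice condition rephrases as $\theta^{-1}(N^*[U_2])\in\overline{\cl U}_1$, i.e.\ $N^*[U_2]\in\widetilde{\overline{\cl U}_1}=\hat{\cl U}_1^*$, which is the second clause of (ii).

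The main obstacle is the bookkeeping of conjugations in the derivation of the $N^*$-slice identity: the slice $L_{\overline{u}_2}((\sigma\circ\theta^{-1})(N))$ naturally lies in $\bb{C}^{X_1}\otimes\overline{\bb{C}}^{Y_1}$, whereas $\theta^{-1}(N^*[U_2])$ lives in the conjugate space $\overline{\bb{C}}^{X_1}\otimes\bb{C}^{Y_1}$, and these are matched only after applying the bar. The verification is a direct computation on a rank-one decomposition $N=A\otimes B$ and $U_2=\theta(u_2)$, closely mirroring the proof of the Claim in Proposition \ref{affine_sim}. Once both slice identities are in place, both implications of the theorem follow by concatenating the equivalences established above.
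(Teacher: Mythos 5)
Your proposal is correct and follows essentially the same route as the paper. Both proofs unpack $\cl U_1\rightarrow_{\rm ns}\cl U_2$ via $\Gamma$ into a per-Kraus-operator membership $(\sigma\circ\theta^{-1})(N)\in\cl U_1\!\Leftrightarrow\!\cl U_2$, and both match the two clauses of (ii) against slice conditions, using the Claim from Proposition~\ref{affine_sim} for the first clause and the bar identity $\tilde{\cl U}=\hat{(\overline{\cl U})}^*$ for the second. The one place you differ slightly in organization is in the treatment of the second clause: the paper applies Lemma~\ref{slice_inclusion}(i) twice, obtaining separate conditions $(\sigma\circ\theta^{-1})(N)\in\cl U_1\!\Rightarrow\!\cl U_2$ and $(\sigma\circ\theta^{-1})(N^*)\in\overline{\cl U}_2\!\Rightarrow\!\overline{\cl U}_1$, and then appeals to the computation of $(\sigma\circ\theta^{-1})(A^*\otimes B^*)$ from the proof of Theorem~\ref{qloc_hom} to recombine these into the $\Leftrightarrow$ membership; you instead invoke Lemma~\ref{slice_inclusion}(ii) directly to split $\cl U_1\!\Leftrightarrow\!\cl U_2$ into two slice conditions on the single tensor $(\sigma\circ\theta^{-1})(N)$, and state the explicit rank-one identity $\theta^{-1}(N^*[U_2])=\overline{L_{\overline{u}_2}\bigl((\sigma\circ\theta^{-1})(N)\bigr)}$ to transport the second slice condition into $N^*[U_2]\in\hat{\cl U}_1^*$. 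I checked this identity on a rank-one tensor $N=(\xi_1\xi_2^*)\otimes(\eta_2\eta_1^*)$, $U_2=\eta_0\xi_0^*$: both sides equal $\langle\xi_2,\xi_0\rangle\langle\eta_0,\eta_2\rangle\,\overline{\xi}_1\otimes\eta_1$, so the identity holds, and this is precisely the same computation the paper performs in establishing~(\ref{eq_symst}). Your route is a bit more self-contained than the paper's, since it avoids the back-reference to the proof of Theorem~\ref{qloc_hom}, but the underlying calculation is identical.
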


\begin{proof}
(i) $\Rightarrow$ (ii) As in Proposition \ref{affine_sim}, it is sufficient to assume $\Gamma: M_{X_{2}Y_{1}}\rightarrow M_{X_{1}Y_{2}}$ is a completely positive map with 
$\Gamma \in \cl{Q}(\cl{U}_{1}\hspace{-0.1cm}\leftrightarrow \hspace{-0.1cm}\cl{U}_{2})$. 
Let $N \in \cl{K}_{\Gamma}$, and write 
$N = \sum_{p=1}^{r}A_{p}\otimes B_{p}$, where $A_{p}: \bb{C}^{X_{2}}\rightarrow \bb{C}^{X_{1}}$ 
and $B_{p}: \bb{C}^{Y_{1}}\rightarrow \bb{C}^{Y_{2}}$.
Recall, from the proof of Proposition \ref{affine_sim} that, if 
$U_{1} \in \hat{\cl{U}}_{1}^{*}$, then
$N[U_{1}] := \sum_{p=1}^{r}B_{p}U_{1}A_{p}$; 
similarly, if 
$U_{2} \in \tilde{\cl{U}}_{2}$, then $N^{*}[U_{2}] = \sum_{p=1}^{r}B_{p}^{*}U_{2}A_{p}^{*}$. 
Using analogous arguments as given in the proofs of Proposition \ref{affine_sim} and 
Theorem \ref{qloc_hom}, we may conclude that $N[U_{1}] \in \tilde{\cl{U}}_{2}$ and 
$N^{*}[U_{2}] \in \hat{\cl{U}}_{1}^{*}$. 

(ii) $\Rightarrow$ (i) Assume that $N[U_{1}] \in \tilde{\cl{U}}_{2}$ and $N^{*}[U_{2}] \in \hat{\cl{U}}_{1}^{*}$ for each $N \in \cl{K}_{\Gamma}, U_{1} \in \hat{\cl{U}}_{1}^{*}$ and $U_{2} \in \tilde{\cl{U}}_{2}$. 
By the Claim in Proposition \ref{affine_sim}, $$ L_{\theta^{-1}(u_{1})}((\sigma\circ \theta^{-1}(N)) \in \cl{U}_{2}$$ whenever $\theta^{-1}(u_{1}) \in \overline{\cl{U}}_{1}$ and $$L_{\theta^{-1}(u_{2})}((\sigma \circ \theta^{-1}(N^{*})) \in \overline{\cl{U}}_{1}$$ whenever $\theta^{-1}(u_{2}) \in \cl{U}_{2}$. By Lemma \ref{slice_inclusion}, 
$(\sigma\circ \theta^{-1})(N) \in \cl{U}_{1} \hspace{-0.15cm}\Rightarrow \hspace{-0.15cm} \cl{U}_{2}$ and 
$(\sigma\circ \theta^{-1})(N^{*}) \in \overline{\cl{U}}_{2}  \hspace{-0.15cm}\Rightarrow  \hspace{-0.15cm} \overline{\cl{U}}_{1}$. Arguing as in the proof of Theorem \ref{qloc_hom}, this is equivalent to saying $(\sigma\circ \theta^{-1})(N) \in \cl{U}_{1} \hspace{-0.1cm}\Leftrightarrow \hspace{-0.1cm} \cl{U}_{2}$. 
As this holds for each $N \in \cl{K}_{\Gamma}$, we have that 
$\Gamma \in \cl{Q}(\cl{U}_{1} \hspace{-0.1cm}\leftrightarrow  \hspace{-0.1cm}\cl{U}_{2})$ and so 
$\cl{U}_{1}\rightarrow_{\rm ns} \cl{U}_{2}$ via $\Gamma$. 
\end{proof}


\end{document}